\documentclass[a4paper,12pt,leqno]{amsart}
\usepackage{latexsym}
\usepackage[all]{xy}
\usepackage{amssymb} 
\usepackage{amsmath} 
\usepackage{amscd}
\usepackage{color}
\usepackage{comment}
\usepackage{mathtools}
\usepackage{fancybox}
\usepackage{pb-diagram}
\usepackage{bm}

\textwidth=16.0cm
\textheight=23.0cm
\topmargin=0.2cm
\oddsidemargin=0.0cm
\evensidemargin=0.0cm

\numberwithin{equation}{section} 

\newtheorem{theorem}{Theorem}[section]
\newtheorem{lemma}[theorem]{Lemma} 
\newtheorem{corollary}[theorem]{Corollary}
\newtheorem{proposition}[theorem]{Proposition} 
 
\newtheorem{remark}[theorem]{Remark}

\newtheorem{example}{Example}
\newtheorem{definition}[theorem]{Definition}

\allowdisplaybreaks[3]

\def\C{\mathbb C}
\def\R{\mathbb R}

\def\Z{\mathbb Z}

\DeclareMathOperator{\codim}{codim}

\DeclareMathOperator{\Int}{Int}

\newcommand{\onto}{\twoheadrightarrow}

\newcommand{\ctc}[2]{\tau_{#1,#2}}
\newcommand{\fan}{\Sigma}
\newcommand{\nfan}{\Sigma'}
\newcommand{\Sn}{\mathfrak{S}_n}

\newcommand{\FP}[1]{F^{+}_{#1}}
\newcommand{\FM}[1]{F^{-}_{#1}}
\newcommand{\FPM}[1]{F^{\pm}_{#1}}
\newcommand{\FCE}[2]{F_{#1,#2}}
\newcommand{\Ad}[1]{\text{\rm Ad}_{#1}}
\newcommand{\G}[1]{{SL_{#1}}}
\newcommand{\Jb}[1]{{J_{#1}^*}}
\newcommand{\RS}{Y}
\newcommand{\EL}{E}
\newcommand{\yn}{n-1}
\newcommand{\dyn}{[n-1]}
\newcommand{\ddyn}{2(n-1)}
\newcommand{\Xp}[1]{X(\fan)_{{#1};> 0}}
\newcommand{\Xpp}[1]{X'_{{#1};> 0}}
\newcommand{\ic}{k}

\begin{document}

\title[Totally nonnegative part of the Peterson variety]{Totally nonnegative part of the Peterson variety\\ in Lie type A}
\author {Hiraku Abe}
\address{Faculty of Science, Department of Applied Mathematics, Okayama University of
Science, 1-1 Ridai-cho, Kita-ku, Okayama, 700-0005, Japan}
\email{hirakuabe@globe.ocn.ne.jp}

\author {Haozhi Zeng}
\address{School of Mathematics and Statistics, Huazhong University of Science and Technology, Wuhan, 430074, P.R. China}
\email{zenghaozhi@icloud.com} 

\begin{abstract}
The Peterson variety (which we denote by $Y$) is a subvariety of the flag variety, introduced by Dale Peterson to describe the quantum cohomology rings of all the partial flag varieties.
Motivated by the mirror symmetry for partial flag varieties, Rietsch studied the totally nonnegative part $Y_{\ge0}$ and its cell decomposition.
Based on the structure of those cells, Rietsch gave the following conjecture in Lie type A; as a cell decomposed space, $Y_{\ge0}$ is homeomorphic to the cube $[0,1]^{\dim_{\C}Y}$.
In this paper, we give a proof of Rietsch's conjecture on $Y_{\ge0}$ in Lie type A by using toric geometry which is closely related to the Peterson variety.
\end{abstract}

\maketitle

\section{Introduction}\label{sect: intro}
An invertible matrix is said to be totally positive (resp.\ nonnegative) if all its minors are positive (resp.\ nonnegative).
In 1990's, Lusztig extended in \cite{lu94} the theory of totally positive matrices to an arbitrary split reductive connected algebraic group over $\R$. It was motivated by Kostant's question on the relation between totally positive matrices and positivity properties of canonical bases for quantized enveloping algebras. He also introduced the totally nonnegative part of the flag varieties in \cite{lu94,lu98}, and now total positivity appears in several fields of mathematics including cluster algebras (\cite{fo10,fo-ze02}), KP equations (\cite{ko-wi11,ko-wi14}), and mathematical physics (\cite{ga-py20,li17,ri12}).
For general properties of the totally nonnegative parts of the flag varieties, see e.g. \cite{ga-ka-lam22,he14,lu94,po-sp-wi09,ri99,ri-wi08}.

Also in 1990's, Dale Peterson (\cite{pe-97}) observed that there is an algebraic variety $Y$ endowed with a certain stratification whose strata describe the quantum cohomology rings of all the partial flag varieties.
This variety $Y$ is now called the Peterson variety, and in Lie type A, it can be defined as follows:
\begin{equation*}
Y\coloneqq \{gB^-\in \G{n}(\mathbb{C})/B^- \mid (\Ad{g^{-1}} f)_{i,j}=0\ (j>i+1)\}, 
\end{equation*}
where $B^-\subseteq \G{n}(\C)$ is the subgroup consisting of lower triangular matrices, and $f$ is the $n\times n$ regular nilpotent matrix in Jordan canonical form.
In \cite{ri03}, Rietsch gave a proof of Peterson's claim for the quantum cohomology ring $qH^*(\G{n}(\mathbb{C})/P)$ for parabolic subgroups $P\subseteq \G{n}(\mathbb{C})$ (cf.\ \cite{ch22}). By combining it with Lusztig's theory of total positivity on $\G{n}(\mathbb{C})/B^-$, she obtained a parametrization of the set of $n\times n$ totally nonnegative Toeplitz matrices. Here, Toeplitz matrices are the matrices of the form
\begin{equation*}
\begin{pmatrix} 
  1&  &&&\\
  x_1 & \!\!\!1 &  && \\
  x_2 & \!\!\!x_1 & \!\!\!1 &  &\\
  \vdots & \!\!\!x_2 & \!\!\!x_1 & \ddots\\
  x_{n-2} & \!\!\!\vdots & \!\!\!\ddots & \ddots & \ddots\\
  x_{n-1} & \!\!\!x_{n-2}& \!\!\!\cdots & x_2 & x_1& 1
\end{pmatrix}
\end{equation*}
for $x_1,x_2,\ldots,x_{n-1}\in \C$.
In fact, Rietsch showed that the totally nonnegative Toeplitz matrices are completely parametrized by the lower left $i\times i$ minors for $1\le i\le n-1$ (\cite[Proposition~1.2]{ri03}). 
One of the key ingredients of the proof was to introduce the totally nonnegative parts of the strata of the Peterson variety $Y$ in the connection to Peterson's description of quantum cohomology rings $qH^*(\G{n}(\mathbb{C})/P)$ of partial flag varieties.
Motivated by the mirror symmetry for $\G{n}(\mathbb{C})/P$, Rietsch studied a cell decomposition of the totally nonnegative part $Y_{\ge0}$ (\cite{ri03,ri06}) whose cells are given by the intersections with open Richardson varieties, and 
she also showed that the whole space $Y_{\ge0}$ is contractible.
Based on the observations on the structure of the cells, Rietsch gave the following conjecture (\cite[Conjecture~10.3]{ri06}), where we set $[0,1]=\{x\in\R\mid 0\le x\le 1\}$.\\

\noindent
\textbf{Rietsch's conjecture on $Y_{\ge0}$}\textbf{.}
As a cell decomposed space, $Y_{\ge 0}$ is homeomorphic to the $(n-1)$-dimensional cube $[0,1]^{\yn}$. \\

In this paper, we prove that Rietsch's conjecture on $Y_{\ge0}$ holds. The idea of our proof is as follows. In \cite{ab-ze23}, the authors constructed a particular morphism 
$\Psi \colon Y\rightarrow X(\fan)$, 
where $X(\fan)$ is the toric orbifold introduced by M. Blume (\cite{bl15}).
We show that this morphism restricts to a homeomorphism on their nonnegative parts:
\begin{equation*}
\Psi_{\ge0} \colon 
Y_{\ge0} \stackrel{\approx}{\rightarrow} X(\fan)_{\ge0}.
\end{equation*}
The toric orbifold $X(\fan)$ is projective so that there is a simplicial lattice polytope $P$ whose normal fan is $\fan$. This polytope $P$ is combinatorially equivalent to an $(n-1)$-dimensional cube $[0,1]^{\yn}$ which implies that we have $P\approx [0,1]^{\yn}$. Since $X(\fan)_{\ge0}$ is homeomorphic to the polytope $P$ under the moment map, we conclude that 
\begin{equation*}
Y_{\ge0} \approx [0,1]^{\yn}.
\end{equation*}
We also show that the resulting homeomorphism preserves the cells of $Y$ and $[0,1]^{\yn}$.
As one can see, our proof uses toric geometry. A similar approach was taken in \cite{po-sp-wi09,ri-wi08} to show that the nonnegative parts of partial flag varieties are CW complexes.

This paper is organized as follows. In Section 2, we fix some notations which we use throughout this paper. In Sections 3 and 4, we describe the toric orbifold $X(\Sigma)$ and its nonnegative part $X(\Sigma)_{\ge 0}$. In Sections 5 and 6, we study the Peterson variety $Y$ and its totally nonnegative part $Y_{\ge 0}$.
In Sections 7 and 8, we recall the definition of the morphism 
$\Psi \colon Y \rightarrow X(\fan)$, and we show that $Y_{\ge 0}$ and $X(\Sigma)_{\ge 0}$ are homeomorphic as cell decomposed spaces through $\Psi$. Finally, in Section 9, we give a proof of Rietsch's conjecture on $Y_{\ge0}$.

\vspace{20pt}

\noindent \textbf{Acknowledgments}.
We are grateful to Konstanze Rietsch, Bin Zhang, Changzheng Li, Naoki Fujita, Mikiya Masuda, Takashi Sato, and Tatsuya Horiguchi for valuable discussions. This research is supported in part by Osaka Central Advanced Mathematical Institute (MEXT Joint Usage/Research Center on Mathematics and Theoretical Physics): Geometry and combinatorics of Hessenberg varieties. The first author is supported in part by JSPS Grant-in-Aid for Scientific Research(C): 23K03102. The second author is supported in part by NSFC: 11901218.

\vspace{20pt}

\section{Notations}\label{sec: notations}
We fix some notations which we use throughout this paper. 
Let $n\ge 2$ be a positive integer, and for simplicity let us denote by
\begin{align*}
 \G{n}\coloneqq \G{n}(\C)
\end{align*}
the complex special linear group of degree $n$.
Let $B^+$ (resp. $B^-$) be the Borel subgroup of $\G{n}$ consisting of upper (resp. lower) triangular matrices.
We denote by $T\coloneqq B^+\cap B^-$ the maximal torus of $\G{n}$ consisting of diagonal matrices:
\begin{align*}
 T = \{\text{diag}(t_1,\ldots,t_n)\in \G{n} \mid t_i\in\C^{\times}\ (1\le i\le n)\}.
\end{align*}
Let $U^+$ (resp. $U^-$) be the unipotent subgroup of $\G{n}$ consisting of upper (resp. lower) triangular matrices having $1$'s on the diagonal.

We write $\dyn\coloneqq\{1,2,\ldots,n-1\}$.
For $i\in[n-1]$, let $\alpha_i \colon T\rightarrow \C^{\times}$ be the $i$-th simple root of $T$ given by
\begin{align*}
 \alpha_i(t) \coloneqq t_i t_{i+1}^{-1}
 \qquad \text{for $t=\text{diag}(t_1,\ldots,t_n)\in T$},
\end{align*}
and let $\varpi_i \colon T\rightarrow \C^{\times}$ be the $i$-th fundamental weight of $T$ given by
\begin{align*}
 \varpi_i(t) \coloneqq t_1t_2\cdots t_i
 \qquad \text{for $t=\text{diag}(t_1,\ldots,t_n)\in T$}.
\end{align*}

\vspace{20pt}

\section{Toric orbifolds associated to Cartan matrices}\label{sec: Toric orbifold}
In this section, we describe the projective toric orbifold $X(\fan)$ appeared in the introduction. We begin with its quotient description, and we describe its fan $\fan$. 
The goal of this section is to describe the nonnegative part $X(\fan)_{\ge0}$ with its cell decomposition.

\subsection{Definition by quotient}\label{subsec: Cartan toric orbifolds}
We take the following description of $\C^{\ddyn}$:
\begin{align*}
 \C^{\ddyn} =
 \{ (x_1,\ldots,x_{n-1};y_1,\ldots,y_{n-1}) \mid x_i,y_i\in\C \ (i\in [n-1]) \}.
\end{align*}
The maximal torus $T\subseteq \G{n}$ has the following linear action on $\C^{\ddyn}$: 
\begin{align}\label{eq: def of T-action on C2r}
\begin{split}
 &t\cdot (x_1,\ldots,x_{n-1};y_1,\ldots,y_{n-1}) \\
 &\hspace{20pt}\coloneqq\big(\varpi_1(t)x_1,\ldots,\varpi_{n-1}(t)x_{n-1}; \alpha_1(t)y_1,\ldots,\alpha_{n-1}(t)y_{n-1}\big)
\end{split}
\end{align}
for $t\in T$ and $(x_1,\ldots,x_{n-1};y_1,\ldots,y_{n-1})\in \C^{\ddyn}$.
Namely, $T$ acts on the first $n-1$ components by the fundamental weights and on the last $n-1$ components by the simple roots.
We define a subset $\EL\subset \C^{2(n-1)}$ by
\begin{align}\label{eq: def of exceptional locus}
 \EL \coloneqq \bigcup_{i\in \dyn} \{ (x_1,\ldots,x_{n-1}; y_1,\ldots,y_{n-1})\in\C^{2(n-1)} \mid  x_i=y_i=0 \}.
\end{align}
Then its complement is given by
\begin{align}\label{eq: def of C-E}
 \C^{\ddyn}-\EL =
 \{ (x_1,\ldots,x_{n-1};y_1,\ldots,y_{n-1}) \mid (x_i,y_i)\ne(0,0) \ (i\in [n-1]) \}.
\end{align}
It is obvious that the $T$-action on $\C^{\ddyn}$ defined in \eqref{eq: def of T-action on C2r} preserves this subset $\C^{\ddyn}-\EL$. We now consider the quotient 
\begin{align*}
 (\C^{\ddyn}-\EL)/T
\end{align*}
by the action of the maximal torus $T\subseteq \G{n}$ given in \eqref{eq: def of T-action on C2r}. 

The quotient space $(\C^{\ddyn}-\EL)/T$ admits an action of a complex torus defined as follows.
The $2(n-1)$-dimensional torus $(\C^{\times})^{\ddyn}$ acts on $\C^{\ddyn}-\EL$ by the coordinate-wise multiplication, and the above $T$-action on $\C^{\ddyn}-\EL$ is obtained through the homomorphism
\begin{align}\label{eq: injection from T}
 T \rightarrow (\C^{\times})^{\ddyn}
 \quad ; \quad 
 t \mapsto \big(\varpi_1(t),\ldots,\varpi_{n-1}(t); \alpha_1(t),\ldots,\alpha_{n-1}(t)\big).
\end{align}
The injectivity of this homomorphism implies that the quotient torus $(\C^{\times})^{\ddyn}/T$ is defined, and an action of the torus $(\C^{\times})^{\ddyn}/T$ on $(\C^{\ddyn}-\EL)/T$ is naturally induced\footnote{In \cite{ab-ze23}, the action of $(\C^{\times})^{\ddyn}/T$ is identified with an action of $T/Z$, where $Z$ is the center of $\G{n}$. See \cite[Sect.~3.3]{ab-ze23} for details.}. 

In \cite{ab-ze23}, the authors showed that the quotient space $(\C^{\ddyn}-\EL)/T$ with this torus action is a simplicial projective toric variety of complex dimension $n-1$. In fact, we constructed a simplicial projective fan $\fan$ in $\R^{n-1}$ such that the quotient construction of the associated toric orbifold $X(\fan)$ agrees with the quotient map $$\C^{\ddyn}-\EL\rightarrow (\C^{\ddyn}-\EL)/T.$$
We review the construction of this fan $\fan$ in the next subsection.

\vspace{10pt}

\subsection{The fan for $(\C^{\ddyn}-\EL)/T$}\label{subsec: Cartan toric orbifolds from fan}
In this subsection, let us review how we see that the quotient $(\C^{\ddyn}-\EL)/T$ with the action of $(\C^{\times})^{\ddyn}/T$  is the toric orbifold $X(\fan)$ associated with a simplicial projective fan $\fan$.
Details can be found in \cite[Sect.\ 3 and 4]{ab-ze23}. 

We begin with defining this fan $\fan$. For $i\in\dyn$, we set
\begin{align*}
 -\alpha^{\vee}_i \coloneqq \bm{e}_{i-1}-2\bm{e}_i+\bm{e}_{i+1} \in \R^{\yn}
\end{align*}
with the convention $\bm{e}_0=\bm{e}_n=0$, 
where each $\bm{e}_i$ is the $i$-th standard basis vector in $\R^{\yn}$.
For $K,L\subseteq \dyn$ such that $K\cap L=\emptyset$, we set
\begin{align*}
 \sigma_{K,L} \coloneqq \text{cone}(\{ -\alpha^{\vee}_i \mid i\in K \}\cup \{ \bm{e}_{i} \mid i \in L \}), 
\end{align*}
where we take $\sigma_{\emptyset,\emptyset}=\{\bm{0}\}$ as the convention.
The dimension of this cone is given by
\begin{align}\label{eq: dim of cone}
 \dim_{\R} \sigma_{K,L} =|K|+|L|
\end{align}
(\cite[Lemma~3.3]{ab-ze23}) which implies that $\sigma_{K,L}$ is a simplicial cone.
We define
\begin{align}\label{eq: def of fan 1}
 \fan \coloneqq \{ \sigma_{K,L} \subseteq \R^{\yn} \mid K,L\subseteq \dyn, \ K\cap L=\emptyset \}.
\end{align}
In \cite{ab-ze23}, it is proved that $\fan$ with the lattice $\Z^{n-1}=\oplus_{i=1}^n\Z \bm{e}_i$ is a simplicial projective fan in $\R^{n-1}$ (\cite[Corollary~3.6 and Proposition~4.4]{ab-ze23}). 

We denote by $X(\fan)$ the simplicial projective toric variety associated to the fan $\fan$.
Cox's quotient construction of $X(\fan)$ is given as follows.
Let $\fan(1)$ be the set of rays (i.e.\ 1 dimensional cones) of $\fan$.
Namely,
\begin{align*}
 \fan(1) = \{\text{cone}(-\alpha^{\vee}_i) \mid i\in \dyn \} \cup\{\text{cone}(\bm{e}_i) \mid i\in \dyn \}.
\end{align*}
A collection of rays in $\fan(1)$ is called a primitive collection 
when it is a minimal collection which does not span a cone in $\fan$.
Recalling the definition of cones $\sigma_{K,L}$ in $\fan$, it is clear that each primitive collection is given by $\{\text{cone}(-\alpha^{\vee}_i), \text{cone}(\bm{e}_i)\}$ for each $i\in \dyn$.
Therefore, the exceptional set $\EL$ in the affine space $\C^{\fan(1)}=\C^{2(n-1)}$ determined by these primitive collections agrees with the one given in \eqref{eq: def of exceptional locus}:
\begin{align*}
 \EL = \bigcup_{i\in \dyn} \{ (x_1,\ldots,x_{\yn}; y_1,\ldots,y_{\yn})\in\C^{2(n-1)} \mid  x_i=y_i=0 \}
\end{align*}
(see \cite[Proposition~5.1.6]{co-li-sc}).
Here, we regard the first $n-1$ components of $\C^{2(n-1)}$ to correspond to the rays $\text{cone}(-\alpha^{\vee}_i)$ in order, and we regard the last $n-1$ components of $\C^{2(n-1)}$ to correspond to the rays $\text{cone}(\bm{e}_i)$ in order.
Now we consider the surjective homomorphism 
\begin{align*}
 \rho \colon (\C^{\times})^{2(n-1)}\rightarrow (\C^{\times})^{n-1}
\end{align*}
determined by the ray vectors $-\alpha^{\vee}_i$ and $\bm{e}_i$ as follows: 
\begin{align*}
 \rho(u_1,\ldots,u_{n-1};v_1,\ldots,v_{n-1}) = ( u^{-\alpha^{\vee}_1} v^{\bm{e}_1}, \ldots, u^{-\alpha^{\vee}_{n-1}} v^{\bm{e}_{n-1}})
\end{align*}
for $(u_1,\ldots,u_{n-1};v_1,\ldots,v_{n-1})\in (\C^{\times})^{2(n-1)}$, 
where $u^{-\alpha^{\vee}_i}\coloneqq u_{i-1}u_i^{-2}u_{n+1}$ and $v^{\bm{e}_i}\coloneqq v_i$ for $1\le i\le n-1$ 
with the convention $u_0=u_n=1$.
To describe the kernel of $\rho$, we recall the injective homomorphism $T \rightarrow (\C^{\times})^{2(n-1)}$ given in \eqref{eq: injection from T}, where $T\subseteq \G{n}$ is the maximal torus.
By a straightforward calculation, one can verify that $\rho$ fits in the exact sequence
\begin{align}\label{eq: exact sequence of tori}
 1\rightarrow T \rightarrow (\C^{\times})^{2(n-1)} \stackrel{\rho}{\rightarrow} (\C^{\times})^{n-1} \rightarrow 1,
\end{align}
where the second map is the one defined in \eqref{eq: injection from T}.
The homomorphism $\rho$ in this sequence extends to a morphism
\begin{align}\label{eq: geometric quotient 1}
 \C^{2(n-1)}-\EL\rightarrow X(\fan)
\end{align}
which is equivariant with respect to $\rho$.

Since the fan $\fan$ is simplicial (\cite[Corollary~3.6]{ab-ze23}), we can describe the toric variety $X(\fan)$ by Cox's quotient construction (\cite[Theorem~5.1.11]{co-li-sc}) which is known to work with possibly non-primitive ray vectors\footnote{When $n\ge3$, the ray vectors $-\alpha^{\vee}_i$ and $\bm{e}_i$ are primitive for $1\le i\le n-1$. When $n=2$, the ray vector $-\alpha^{\vee}_1$ becomes $-2\bm{e}_1$ which is not primitive.} by \cite[Proposition 3.7]{bo-ch-sm05} (cf.\ \cite[Sect.~3.3]{ab-ze23}).
It claims that the morphism \eqref{eq: geometric quotient 1} is a geometric quotient for the $T$-action on $\C^{2(n-1)}-\EL$.
This implies that the morphism \eqref{eq: geometric quotient 1} induces a bijection 
\begin{align}\label{eq: geometric quotient 2}
 (\C^{2(n-1)}-\EL)/T \rightarrow X(\fan)
\end{align}
so that we may regard the quotient $(\C^{2(n-1)}-\EL)/T$ as an algebraic variety which is isomorphic to $X(\fan)$ under the map \eqref{eq: geometric quotient 2}.
By construction, this isomorphism is equivariant with respect to the group isomorphism of tori $(\C^{\times})^{2(n-1)}/T \stackrel{\cong}{\rightarrow} (\C^{\times})^{n-1}$ induced by \eqref{eq: exact sequence of tori}.
Recalling that $\C^{2(n-1)}-\EL$ was given in \eqref{eq: def of C-E}, we obtain
\begin{align*}
 X(\fan) \cong \{[x_1,\ldots,x_{n-1};y_1,\ldots,y_{n-1}] \mid x_i, y_i\in\C, (x_i,y_i)\ne(0,0) \ (i\in \dyn)\}.
\end{align*}
In the rest of this paper, we identify these two varieties, and we take the action of $(\C^{\times})^{2(n-1)}/T$ as the canonical torus action on $X(\fan)$.

\vspace{10pt}

As is well-known, the cones in the fan $\fan$ correspond to the torus orbits in $X(\fan)$ which gives us the orbit decomposition of $X(\fan)$.
In later sections, we will study a Richardson type decomposition of the Peterson variety (see \eqref{eq: def of richardson strata}).
To compare these decompositions, it is more suited to consider the cones $\sigma_{K,\dyn-J}$ (rather than $\sigma_{K,J}$). Here, we note that the cone $\sigma_{K,\dyn-J}$ is defined for $K\subseteq J\subseteq \dyn$ since the condition $K\subseteq J$ is equivalent to $K\cap(\dyn-J)=\emptyset$.
Because of this reason, 
let us use the following notation:
\begin{align}\label{eq: def of cone 2}
 \ctc{K}{J} \coloneqq \sigma_{K,\dyn-J}
 =\text{cone}(\{ -\alpha^{\vee}_i \mid i\in K \}\cup \{ \bm{e}_{i} \mid i \notin J \})
\end{align}
for $K\subseteq J\subseteq \dyn$.
Since $\sigma_{K,\dyn-J}$ is a cone of dimension $|K|+(n-1)-|J|$ by \eqref{eq: dim of cone}, we obtain 
\begin{align*}
 \codim_{\R}\ctc{K}{J} = |J| - |K|.
\end{align*}
By the definition \eqref{eq: def of fan 1} of $\fan$, it can be expressed as
\begin{align}\label{eq: def of fan 2}
 \fan = \{ \ctc{K}{J} \subseteq \R^{\yn} \mid K\subseteq J\subseteq \dyn \}.
\end{align}

\vspace{5pt}

\begin{example}\label{ex: fan n=3}
{\rm When $n=3$, then $\fan$ is a fan in $\R^2$ which we depict in Figure~\ref{pic: fan2 and fan3}.
\begin{figure}[htbp]
\hspace{-20pt}
{\unitlength 0.1in%
\begin{picture}(23.7500,21.2500)(7.7000,-32.9500)%
%
\special{pn 0}%
\special{sh 0.200}%
\special{pa 2112 2227}%
\special{pa 2112 1181}%
\special{pa 3134 1181}%
\special{pa 3134 2227}%
\special{pa 2112 2227}%
\special{ip}%
\special{pn 8}%
\special{pa 2112 2227}%
\special{pa 2112 1181}%
\special{pa 3134 1181}%
\special{pa 3134 2227}%
\special{pa 2112 2227}%
\special{ip}%
%
\special{pn 0}%
\special{sh 0.200}%
\special{pa 2065 2274}%
\special{pa 2565 3271}%
\special{pa 1055 3271}%
\special{pa 1055 1762}%
\special{pa 2065 2274}%
\special{ip}%
\special{pn 8}%
\special{pa 2065 2274}%
\special{pa 2565 3271}%
\special{pa 1055 3271}%
\special{pa 1055 1762}%
\special{pa 2065 2274}%
\special{ip}%
%
\special{pn 0}%
\special{sh 0.200}%
\special{pa 2065 2215}%
\special{pa 2065 1181}%
\special{pa 1055 1181}%
\special{pa 1055 1704}%
\special{pa 2065 2215}%
\special{ip}%
\special{pn 8}%
\special{pa 2065 2215}%
\special{pa 2065 1181}%
\special{pa 1055 1181}%
\special{pa 1055 1704}%
\special{pa 2065 2215}%
\special{ip}%
%
\special{pn 0}%
\special{sh 0.200}%
\special{pa 2135 2274}%
\special{pa 3134 2274}%
\special{pa 3134 3271}%
\special{pa 2628 3271}%
\special{pa 2135 2274}%
\special{ip}%
\special{pn 8}%
\special{pa 2135 2274}%
\special{pa 3134 2274}%
\special{pa 3134 3271}%
\special{pa 2628 3271}%
\special{pa 2135 2274}%
\special{ip}%
%
\special{pn 8}%
\special{pa 2089 2250}%
\special{pa 1044 1728}%
\special{fp}%
%
\special{pn 8}%
\special{pa 2089 2250}%
\special{pa 2611 3295}%
\special{fp}%
%
\special{pn 8}%
\special{pa 2089 2250}%
\special{pa 2507 2250}%
\special{fp}%
\special{sh 1}%
\special{pa 2507 2250}%
\special{pa 2440 2230}%
\special{pa 2454 2250}%
\special{pa 2440 2270}%
\special{pa 2507 2250}%
\special{fp}%
%
\special{pn 8}%
\special{pa 2089 2250}%
\special{pa 2089 1832}%
\special{fp}%
\special{sh 1}%
\special{pa 2089 1832}%
\special{pa 2069 1899}%
\special{pa 2089 1885}%
\special{pa 2109 1899}%
\special{pa 2089 1832}%
\special{fp}%
%
\special{pn 8}%
\special{pa 2089 2250}%
\special{pa 1299 1856}%
\special{fp}%
\special{sh 1}%
\special{pa 1299 1856}%
\special{pa 1350 1904}%
\special{pa 1347 1880}%
\special{pa 1368 1868}%
\special{pa 1299 1856}%
\special{fp}%
%
\special{pn 8}%
\special{pa 2089 2250}%
\special{pa 2495 3063}%
\special{fp}%
\special{sh 1}%
\special{pa 2495 3063}%
\special{pa 2483 2994}%
\special{pa 2471 3015}%
\special{pa 2447 3012}%
\special{pa 2495 3063}%
\special{fp}%
\put(28.4400,-15.6400){\makebox(0,0)[lb]{$\ctc{\emptyset}{\emptyset}$}}%
\put(11.5000,-30.0000){\makebox(0,0)[lb]{$\ctc{\{1,2\}}{\{1,2\}}$}}%
\put(28.9000,-27.2000){\makebox(0,0)[lb]{$\ctc{\{2\}}{\{2\}}$}}%
\put(12.0600,-14.8400){\makebox(0,0)[lb]{$\ctc{\{1\}}{\{1\}}$}}%
\put(27.0300,-22.1500){\makebox(0,0)[lb]{$\ctc{\emptyset}{\{2\}}$}}%
\put(21.3500,-16.4600){\makebox(0,0)[lb]{$\ctc{\emptyset}{\{1\}}$}}%
\put(25.1700,-30.7500){\makebox(0,0)[lb]{$\ctc{\{2\}}{\{1,2\}}$}}%
\put(7.7000,-20.2000){\makebox(0,0)[lb]{$\ctc{\{1\}}{\{1,2\}}$}}%
%
\special{pn 8}%
\special{pa 2089 2250}%
\special{pa 2089 3295}%
\special{dt 0.045}%
%
\special{pn 8}%
\special{pa 2089 2250}%
\special{pa 1044 2250}%
\special{dt 0.045}%
%
\special{pn 8}%
\special{pa 2089 1170}%
\special{pa 2089 2250}%
\special{fp}%
%
\special{pn 8}%
\special{pa 3145 2250}%
\special{pa 2100 2250}%
\special{fp}%
\put(16.5500,-24.3100){\makebox(0,0)[lb]{$\ctc{\emptyset}{\{1,2\}}$}}%
%
\special{sh 1.000}%
\special{ia 2089 2250 34 34 0.0000000 6.2831853}%
\special{pn 8}%
\special{ar 2089 2250 34 34 0.0000000 6.2831853}%
\end{picture}}%
\caption{The fan $\fan$ for $n=3$}
\label{pic: fan2 and fan3}
\end{figure}
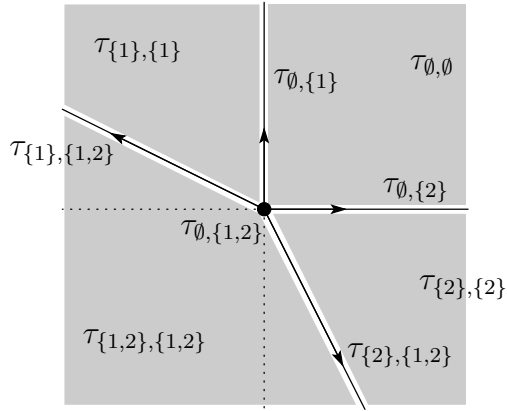
}
\end{example}

\vspace{5pt}

\subsection{Torus orbit decomposition of $X(\fan)$}\label{subsec: Orbit decomposition}
Recall from the last subsection that the toric orbifold $X(\fan)=(\C^{\ddyn}-\EL)/T$ is the quotient of the $T$-action on $\C^{\ddyn}-\EL$ given in \eqref{eq: def of T-action on C2r}.
For simplicity, we write elements of $\mathbb{C}^{\ddyn}$ by
\begin{align*}
 (x;y) = (x_1,\dots, x_{n-1}; y_1,\dots, y_{n-1}).
\end{align*}
Having the definition \eqref{eq: def of cone 2} of the cone $\ctc{K}{J}$ in mind, we consider the following subsets of $X(\fan)$.
For $K,J\subseteq \dyn$, we define a subset $X(\fan)_{K,J}\subseteq X(\fan)$ by \vspace{5pt}
\begin{equation}\label{eq: definition of X sigma IKJ}
X(\fan)_{K,J}\coloneqq\left\{[x;y]\in X(\fan) \ \left| \ \begin{cases}
x_i=0\quad \text{if}\quad i\in K\\
x_i\neq 0\quad \text{if}\quad i\notin K
\end{cases}
\hspace{-10pt},\hspace{5pt} 
\begin{cases}
y_i=0\quad \text{if}\quad i\notin J\\
y_i\neq 0\quad \text{if}\quad i\in J
\end{cases}\right\} \right. .\vspace{5pt}
\end{equation}
Namely, $K$ and $\dyn-J$ specify the coordinates having $0$ in their entries.
This subset is well-defined since the action of $T$ on $\C^{\ddyn}-\EL$ is given by a coordinate-wise multiplication of non-zero complex numbers (see \eqref{eq: def of T-action on C2r}).

For an element $[x;y]\in X(\fan)=(\C^{\ddyn}-\EL)/T$, it is obvious that we have $[x;y]\in X(\fan)_{K,J}$ for some $K,J\subseteq \dyn$. 
Thus, we obtain that
\begin{align*}
X(\fan) = \bigcup_{K,J\subseteq \dyn} X(\fan)_{K,J} .
\end{align*}

\begin{lemma}
For $K,J\subseteq \dyn$, we have $X(\fan)_{K,J}=\emptyset$ unless $K\subseteq J$. 
\end{lemma}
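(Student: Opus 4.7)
The plan is to argue by contradiction using the defining condition of $\C^{\ddyn}-\EL$, which is that no pair $(x_i,y_i)$ can vanish simultaneously. Specifically, I would suppose for contradiction that $X(\fan)_{K,J}\neq \emptyset$ and pick any $[x;y]$ in it, then show that if some $i$ lies in $K$ but not in $J$, both coordinates $x_i$ and $y_i$ would be forced to be zero, contradicting the requirement that $(x;y)\in\C^{\ddyn}-\EL$.

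In more detail, I would first unpack the two sides of the definition \eqref{eq: definition of X sigma IKJ}: the clause on $K$ says $x_i=0$ precisely when $i\in K$, and the clause on $J$ says $y_i=0$ precisely when $i\notin J$. Next, I would recall the description \eqref{eq: def of C-E} of $\C^{\ddyn}-\EL$, namely that every representative $(x;y)$ satisfies $(x_i,y_i)\ne(0,0)$ for all $i\in\dyn$; equivalently, the two conditions $x_i=0$ and $y_i=0$ cannot hold simultaneously. Then, assuming there exists $i\in K$ with $i\notin J$, the definition of $X(\fan)_{K,J}$ forces $x_i=0$ and $y_i=0$, contradicting $[x;y]\in X(\fan)$.

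Hence no such $i$ exists, which is exactly the statement $K\subseteq J$. This step is essentially a direct check, so I do not anticipate any real obstacle; the whole argument is a one-line unpacking of the definitions of $X(\fan)_{K,J}$ and of the exceptional locus $\EL$, and the only thing to be careful about is to use the \emph{precise} bi-conditional in \eqref{eq: definition of X sigma IKJ} rather than just the implications in one direction.
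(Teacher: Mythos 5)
Your proof is correct and is essentially the same as the paper's: both arguments simply unpack definition \eqref{eq: definition of X sigma IKJ} together with the condition $(x_i,y_i)\ne(0,0)$ from \eqref{eq: def of C-E}, the only difference being that you phrase it as a contradiction while the paper argues directly that $i\in K$ forces $y_i\ne 0$ and hence $i\in J$.
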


\begin{proof}
Suppose that $X(\fan)_{K,J}\ne\emptyset$, and we prove that $K\subseteq J$ holds.
This assumption means that there exists an element $[x;y]\in X(\fan)_{K,J}$. Since $(x;y)\in \C^{\ddyn}-\EL$, it satisfies the following property: if $x_i=0$ then $y_i\ne0$ (see \eqref{eq: def of C-E}). 
Recalling the definition~\eqref{eq: definition of X sigma IKJ} of $X(\fan)_{K,J}$, 
this means that $K\subseteq J$, as desired.
\end{proof}

\vspace{10pt}

Recall that $X(\fan)=(\C^{\ddyn}-\EL)/T$ has the canonical action of the torus $(\C^{\times})^{\ddyn}/T$ given by the coordinate-wise multiplication.
One can easily see that a non-empty subset of the form \eqref{eq: definition of X sigma IKJ} is an orbit of this torus action on $X(\fan)$.
More specifically, by
the quotient construction of simplicial toric varieties (see \cite[Section~5.1]{co-li-sc}), each $X(\fan)_{K,J}$ is the torus orbit in $X(\fan)$ corresponding to the cone $\ctc{K}{J}=\text{cone}(\{ -\alpha^{\vee}_i \mid i\in K \}\cup \{ \bm{e}_{i} \mid i \notin J \})$ in $\fan$ under the orbit-cone correspondence. In particular, we have
\begin{align*}
 \dim_{\C}X(\fan)_{K,J}=\codim_{\R} \ctc{K}{J} = |J|-|K|
\end{align*}
for $K\subseteq J\subseteq\dyn$.
In particular, we have $\codim_{\C}X(\fan)_{K,J}=|K|+(n-1)-|J|$ which agrees with the codimension seen from the definition \eqref{eq: definition of X sigma IKJ}.

\vspace{10pt}

\begin{corollary}\label{cor: orbit decomp of X}
The orbit decomposition of $X(\fan)=(\C^{\ddyn}-\EL)/T$ with respect to the canonical torus action of $(\C^{\times})^{\ddyn}/T$ is given by
\begin{align*}
X(\fan) = \bigsqcup_{K\subseteq J \subseteq \dyn} X(\fan)_{K,J} ,
\end{align*}
where $X(\fan)_{K,J}$ is the orbit corresponding to the cone $\ctc{K}{J}\in\fan$.
\end{corollary}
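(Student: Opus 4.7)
The statement is essentially a bookkeeping corollary that assembles three ingredients already established in the excerpt, so the plan is to stitch them together cleanly.

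First, I would observe that the covering
\begin{equation*}
 X(\fan) = \bigcup_{K,J\subseteq \dyn} X(\fan)_{K,J}
\end{equation*}
was already noted before the previous lemma: every $[x;y]\in X(\fan)$ determines a unique pair $(K,J)$ by letting $K=\{i\mid x_i=0\}$ and $J=\{i\mid y_i\ne 0\}$, and this pair is well defined because the $T$-action on $\C^{\ddyn}-\EL$ is coordinate-wise. The preceding lemma then eliminates all pairs with $K\not\subseteq J$, reducing the index set to $\{(K,J)\mid K\subseteq J\subseteq \dyn\}$.

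Next I would establish disjointness. If $[x;y]\in X(\fan)_{K,J}\cap X(\fan)_{K',J'}$, then the vanishing/non-vanishing conditions in \eqref{eq: definition of X sigma IKJ} force $K=\{i\mid x_i=0\}=K'$ and $\dyn-J=\{i\mid y_i=0\}=\dyn-J'$, hence $(K,J)=(K',J')$. Again this uses only that whether a coordinate is zero is invariant under the coordinate-wise $T$-action, so the conditions descend to $X(\fan)=(\C^{\ddyn}-\EL)/T$.

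Finally I would identify each $X(\fan)_{K,J}$ with the torus orbit attached to $\ctc{K}{J}$. Recall from Subsection~3.2 that under Cox's construction, the rays of $\fan$ are indexed by the $2(n-1)$ coordinates of $\C^{\ddyn}$: the first $n-1$ correspond to $\text{cone}(-\alpha^{\vee}_i)$ and the last $n-1$ to $\text{cone}(\bm{e}_i)$. By the orbit-cone correspondence for simplicial toric varieties (\cite[Section~5.1]{co-li-sc}), the $(\C^{\times})^{\ddyn}/T$-orbit corresponding to a cone $\sigma\in\fan$ consists of the classes $[x;y]$ whose vanishing coordinates are exactly those indexing the rays of $\sigma$. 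Applying this to $\ctc{K}{J}=\text{cone}(\{-\alpha^{\vee}_i\mid i\in K\}\cup\{\bm{e}_i\mid i\notin J\})$ recovers precisely the defining conditions of \eqref{eq: definition of X sigma IKJ}, giving $X(\fan)_{K,J}$ as the corresponding orbit. Since all of the pieces are in place, I do not anticipate a genuine obstacle; the only point that merits care is making the bijection between coordinates of $\C^{\ddyn}$ and rays of $\fan$ explicit so that the vanishing pattern $\{x_i=0\}\cup\{y_i=0\}$ matches the ray set of $\ctc{K}{J}$ exactly.
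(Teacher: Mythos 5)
Your proposal is correct and follows essentially the same route as the paper: the covering and disjointness come from the $T$-invariant vanishing pattern of coordinates, the preceding lemma removes the pairs with $K\not\subseteq J$, and the identification of each $X(\fan)_{K,J}$ with the orbit of $\ctc{K}{J}$ is exactly the orbit--cone correspondence under Cox's quotient construction as invoked in the paper's discussion preceding the corollary. No gaps.
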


\vspace{10pt}

We end this section by recording the following claim.

\begin{lemma}\label{lem: preparation for nonnegative strata}
For $K\subseteq J\subseteq \dyn$, we have 
\begin{equation*}
X(\fan)_{K,J}=\left\{[x;y]\in X(\fan) \ \left| \ 
\begin{cases}
x_i=0\quad \text{if}\quad i\in K\\
x_i\ne 0\quad \text{if}\quad i\notin K
\end{cases}
\hspace{-10pt},\hspace{5pt} 
\begin{cases}
y_i=0\quad \text{if}\quad i\notin J\\
y_i=1\quad \text{if}\quad i\in J
\end{cases}\right\} \right. .
\end{equation*}
\end{lemma}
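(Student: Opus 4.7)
The plan is to use the freedom of the $T$-action on $\C^{\ddyn}-\EL$ to rescale the nonzero $y_i$-coordinates (those with $i\in J$) simultaneously to $1$. Recall that by \eqref{eq: def of T-action on C2r}, an element $t\in T$ acts on the $y_i$-entry by multiplication by $\alpha_i(t)$, and on the $x_i$-entry by multiplication by $\varpi_i(t)\in\C^{\times}$. In particular, the $T$-action preserves the vanishing/non-vanishing pattern of each coordinate, so the condition ``$x_i=0$ iff $i\in K$'' and the condition ``$y_i=0$ iff $i\notin J$'' are unchanged by $T$. Thus only the values of the nonzero $y_i$ need to be normalized.

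The main point is the following auxiliary claim: for any $J\subseteq \dyn$, the group homomorphism
\begin{equation*}
 T\longrightarrow (\C^{\times})^{|J|},\qquad t\longmapsto (\alpha_i(t))_{i\in J}
\end{equation*}
is surjective. This is the only nontrivial step, and it follows from the fact that the set of simple roots $\{\alpha_i\mid i\in \dyn\}$ forms a $\Z$-basis of the character lattice of the maximal torus $T\subseteq \G{n}$ (this is the standard fact that the simple roots form a basis of the root lattice, which for type $A_{n-1}$ coincides with the character lattice of $T\subseteq SL_n$). Hence any subset $\{\alpha_i\mid i\in J\}$ extends to a basis of $X^*(T)$, which yields the surjectivity above.

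Given the surjectivity, I would argue as follows. Take any $[x;y]\in X(\fan)_{K,J}$ and choose a representative $(x;y)\in\C^{\ddyn}-\EL$ as in \eqref{eq: definition of X sigma IKJ}. By the claim, there exists $t\in T$ with $\alpha_i(t)=y_i^{-1}$ for every $i\in J$. Replacing $(x;y)$ by $t\cdot(x;y)$, we obtain a new representative of the same class $[x;y]$ in which $y_i=1$ for $i\in J$, $y_i=0$ for $i\notin J$ (unchanged), $x_i=0$ for $i\in K$ (unchanged), and $x_i=\varpi_i(t)x_i\ne 0$ for $i\notin K$. This shows that $X(\fan)_{K,J}$ is contained in the right-hand side of the asserted equality.

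The reverse inclusion is immediate from the definition \eqref{eq: definition of X sigma IKJ}: any representative $(x;y)$ with $y_i=1$ for $i\in J$, $y_i=0$ for $i\notin J$, $x_i=0$ for $i\in K$, $x_i\ne 0$ for $i\notin K$ obviously satisfies the defining conditions of $X(\fan)_{K,J}$ (and lies in $\C^{\ddyn}-\EL$ since the hypothesis $K\subseteq J$ ensures $(x_i,y_i)\ne(0,0)$ for every $i$). The main obstacle in the whole argument is the surjectivity claim, but as explained this reduces to a standard fact about the root system of type $A_{n-1}$.
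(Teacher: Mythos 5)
Your argument is essentially the paper's: both proofs note that the reverse inclusion is trivial and that the forward inclusion amounts to using the $T$-action to rescale the nonzero $y$-coordinates to $1$, which reduces to the surjectivity of the simple-root map on $T$ (the paper uses $t\mapsto(\alpha_1(t),\ldots,\alpha_{n-1}(t))$ on all of $\dyn$; your restriction to $i\in J$ is immaterial since $y_i=0$ for $i\notin J$).

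However, the justification you give for that surjectivity rests on a false statement. For the maximal torus $T\subseteq \G{n}$ the character lattice $X^*(T)$ is the \emph{weight} lattice of type $A_{n-1}$ (it is generated by the fundamental weights $\varpi_1,\ldots,\varpi_{n-1}$, as $\G{n}$ is simply connected), and the root lattice spanned by $\alpha_1,\ldots,\alpha_{n-1}$ is a proper sublattice of index $n$; so the simple roots are \emph{not} a $\Z$-basis of $X^*(T)$, and $\{\alpha_i\mid i\in J\}$ need not extend to one (it is the adjoint group $PGL_n$ whose character lattice is the root lattice). The surjectivity you need is nonetheless true, for a weaker reason: a homomorphism of tori is surjective as soon as the induced map on character lattices is injective, i.e.\ as soon as the characters $\alpha_i$ ($i\in J$) are linearly independent in $X^*(T)$ — which they are. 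Alternatively, one can verify it directly: given targets $c_i\in\C^{\times}$ for $i\in J$, solve $t_i t_{i+1}^{-1}=c_i$ recursively and use an $n$-th root of unity (or of the leftover product) to adjust the remaining free diagonal entry so that $\det t=1$; divisibility of $\C^{\times}$ makes this possible. With the justification repaired in this way, your proof is correct and matches the paper's.
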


\begin{proof}
Let us denote the right hand side by $X'_{K,J}$, and we show that $X(\fan)_{K,J}=X'_{K,J}$.
By definition, we have $X'_{K,J}\subseteq X(\fan)_{K,J}$.
Since we have $[x;y]=[t\cdot(x;y)]$ for all $t\in T$, the opposite inclusion follows from the surjectivity of the homomorphism
\begin{align}\label{eq: alpha surj}
 T\rightarrow (\C^{\times})^{n-1}
 \quad ; \quad
 t\mapsto (\alpha_1(t),\ldots,\alpha_{n-1}(t)).
\end{align}
\end{proof}

\begin{example}\label{ex: n=3 orbits}
{\rm
Let $n=3$, then the orbits in $X(\fan)$ of dimension $0$ (i.e., the fixed points) are
\begin{equation*}
 X(\fan)_{\emptyset, \emptyset}, \ 
 X(\fan)_{\{1\}, \{1\}}, \ 
 X(\fan)_{\{2\}, \{2\}}, \ 
 X(\fan)_{\{1,2\}, \{1,2\}}.
\end{equation*}
The orbits in $X(\fan)$ of dimension $1$ are
\begin{equation*}
X(\fan)_{\emptyset, \{1\}}, \ 
X(\fan)_{\emptyset, \{2\}}, \ 
X(\fan)_{\{1\}, \{1,2\}}, \ 
X(\fan)_{\{2\}, \{1,2\}}.
\end{equation*}
Finally, the unique free orbit in $X(\fan)$ is $X(\fan)_{\emptyset, \{1,2\}}$. Compare these with the cones in Figure~\ref{pic: fan2 and fan3}.
}
\end{example}

\vspace{20pt}

\section{Nonnegative part of $X(\fan)$}
In this section, we study the nonnegative part of the toric orbifold $X(\fan)$. 
As is well-known, the torus orbit decomposition of a projective toric variety gives rise to a cell decomposition of the nonnegative part of $X(\fan)$. We describe this explicitly.

\subsection{Definition of $X(\fan)_{\ge 0}$}
For an arbitrary (normal) toric variety $X$ over $\C$, the nonnegative part $X_{\ge 0}$ is defined as the set of ``$\R_{\ge 0}$-valued points" of $X$. More details on $X_{\ge 0}$ can be found in \cite[Sect.~12.2]{co-li-sc}.

Recall that we have $X(\fan)=(\C^{\ddyn}-\EL)/T$.
By \cite[Proposition 12.2.1]{co-li-sc}, we have 
\begin{align*}
X(\fan)_{\ge0} = \{[x;y]\in X(\fan) \mid x_i, y_i\ge0\ \text{for $i\in \dyn$}\}.
\end{align*}
Namely, an element of $X(\fan)$ belongs to $X(\fan)_{\ge0}$ if and only if it can be represented by some $(x;y)\in\C^{\ddyn}-\EL$ with nonnegative entries.

\subsection{A decomposition of $X(\fan)_{\ge 0}$}
Recall from Corollary~\ref{cor: orbit decomp of X} that the torus orbit decomposition of $X(\fan)$ is given by 
\begin{align}\label{eq: X nonnegative decomp}
X(\fan) = \bigsqcup_{K\subseteq J \subseteq \dyn} X(\fan)_{K,J} .
\end{align}
To obtain a decomposition of $X(\fan)_{\ge0}$, we set 
\begin{align*}
\Xp{K,J} \coloneqq X(\fan)_{K,J}\cap X(\fan)_{\ge 0}
\quad \text{for $K\subseteq J \subseteq \dyn$.}
\end{align*}
Then,  we obtain from \eqref{eq: X nonnegative decomp} that
\begin{align}\label{eq: decomp of X nonnegative KJ}
X(\fan)_{\ge0} = \bigsqcup_{K\subseteq J \subseteq \dyn} \Xp{K,J} .
\end{align}

We define $T_{>0}\subseteq T (\subseteq \G{n})$ by
\begin{equation*}
 T_{>0} \coloneqq \{ \text{diag}(t_1,\ldots,t_n)\in \G{n} \mid \text{$t_i>0$ for $i\in \dyn$}\}
\end{equation*}
(which agrees with Lusztig's definition of $T_{>0}$ given in Section~\ref{sec: nonnegative part of Y}).
It is straightforward to see that the surjective homomorphism $T\rightarrow (\C^{\times})^{n-1}$ given in \eqref{eq: alpha surj} restricts to
\begin{align}\label{eq: varpi iso 3}
 T_{>0}\rightarrow (\R_{>0})^{n-1}
 \quad ; \quad
 t\mapsto (\alpha_1(t),\ldots,\alpha_{n-1}(t)).
\end{align} 

\vspace{5pt}

\begin{lemma}\label{eq: alpha surjectivity}
The map \eqref{eq: varpi iso 3} is a bijection.
\end{lemma}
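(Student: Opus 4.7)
The map in question is a continuous homomorphism between the multiplicative groups $T_{>0}$ and $(\R_{>0})^{n-1}$, so to prove that it is a bijection it suffices to establish injectivity and surjectivity separately.

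For injectivity, suppose $t=\text{diag}(t_1,\ldots,t_n)\in T_{>0}$ satisfies $\alpha_i(t)=1$ for all $i\in\dyn$. By definition of the simple roots this means $t_i=t_{i+1}$ for each $i\in\dyn$, so all the diagonal entries coincide, say $t_i=c$ with $c>0$. The condition $\det(t)=1$ then gives $c^n=1$, and positivity forces $c=1$, so $t$ is the identity.

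For surjectivity, given $(a_1,\ldots,a_{n-1})\in(\R_{>0})^{n-1}$ I would construct a preimage explicitly. Set
\begin{equation*}
 s \coloneqq \Bigl(\prod_{j=1}^{n-1} a_j^{\,j}\Bigr)^{-1/n}
\end{equation*}
and then define $t_n\coloneqq s$ and $t_i\coloneqq s\prod_{j=i}^{n-1} a_j$ for $i\in\dyn$. Each $t_i$ is a positive real number, and a direct computation shows $t_1t_2\cdots t_n=s^n\prod_{j=1}^{n-1}a_j^{\,j}=1$, so $t\coloneqq\text{diag}(t_1,\ldots,t_n)$ lies in $T_{>0}$. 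By construction $\alpha_i(t)=t_i/t_{i+1}=a_i$ for every $i\in\dyn$, which gives the desired preimage.

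There is no real obstacle here: the statement is essentially the assertion that the Cartan matrix, viewed as a map $(\R_{>0})^{n} \to (\R_{>0})^{n-1}$ on the diagonal coordinates (modulo the $\det=1$ condition), is a bijection, which is visible from the triangular substitution above. If one prefers an abstract argument, one can note instead that the simple-root map $T\to(\C^\times)^{n-1}$ is surjective with kernel equal to the center $Z(\G{n})=\{\zeta I\mid \zeta^n=1\}$, and then observe that $Z(\G{n})\cap T_{>0}=\{I\}$, so restriction to $T_{>0}$ gives a bijection onto its image, which is a connected subgroup of $(\C^\times)^{n-1}$ containing $(\R_{>0})^{n-1}$.
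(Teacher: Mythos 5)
Your proof is correct, and it takes a different route from the paper. You argue directly on the torus: injectivity via the triviality of the kernel of the character map (equal diagonal entries plus $\det=1$ and positivity force $t=1$), and surjectivity via an explicit triangular substitution $t_i=s\prod_{j\ge i}a_j$ with the normalizing factor $s=(\prod_j a_j^{\,j})^{-1/n}$, which indeed satisfies $\alpha_i(t)=a_i$ and $\det t=1$. The paper instead factors the simple-root map through the fundamental-weight coordinates: it first checks that $t\mapsto(\varpi_1(t),\ldots,\varpi_{n-1}(t))$ is a bijection $T_{>0}\to(\R_{>0})^{n-1}$, and then identifies the remaining map $u\mapsto(u_{i-1}^{-1}u_i^2u_{i+1}^{-1})_i$ with the linear map given by the Cartan matrix after taking logarithms, concluding from $\det C\neq 0$. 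Your argument is more elementary and self-contained (no logarithm or determinant of the Cartan matrix needed), while the paper's formulation makes the structural reason transparent — the statement is exactly the invertibility of the Cartan matrix over $\R$, which is the form of the argument that generalizes beyond the explicit type A coordinates. One caution: your closing "abstract" alternative is not actually complete as stated — that the image of $T_{>0}$ is a subgroup \emph{containing} $(\R_{>0})^{n-1}$ is precisely the surjectivity being proved, so that aside should not be relied on; fortunately your explicit construction already settles it.
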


\begin{proof}
To begin with, we consider the following map
\begin{align}\label{eq: varpi iso 5}
 T_{>0}\rightarrow (\R_{>0})^{n-1}
 \quad ; \quad
 t\mapsto (\varpi_1(t),\ldots,\varpi_{n-1}(t)).
\end{align}
Since we have $\varpi_i(t)=t_1t_2\cdots t_i$ for $i\in\dyn$, it is straightforward to see that this map is a bijection.
Note that each component of \eqref{eq: varpi iso 3} can be written as
\begin{align*}
 \alpha_i(t)=\varpi_{i-1}(t)^{-1}\varpi_i(t)^2\varpi_{i+1}(t)^{-1}
\end{align*}
with the convention $\varpi_0(t)=\varpi_n(t)=1$.
This leads us to consider the functions $\phi_i(u)\coloneqq u_{i-1}^{-1}u_i^2u_{i+1}^{-1}$ for $i\in\dyn$ and $u\in (\R_{>0})^{n-1}$ with the convention $u_0=u_n=1$, and then the homomorphism \eqref{eq: varpi iso 3} is the composition of \eqref{eq: varpi iso 5} and the map 
\begin{align}\label{eq: varpi iso 6}
 (\R_{>0})^{n-1} \rightarrow (\R_{>0})^{n-1}
 \quad ; \quad
 u \mapsto (\phi_1(u),\ldots,\phi_{n-1}(u)).
\end{align}
Since \eqref{eq: varpi iso 5} is bijective, it now suffices to show that the map \eqref{eq: varpi iso 6} is a bijection.
Under the identification $\R_{>0}\cong\R$ given by the logarithm, \eqref{eq: varpi iso 6} is identified with the linear map
\begin{align*}
 \R^{n-1} \rightarrow \R^{n-1}
 \quad ; \quad
 \bm{x} \mapsto C\bm{x},
\end{align*}
where $C$ is the Cartan matrix of type A$_{n-1}$.
Since we have $\det C\ne0$, this is a bijection. Hence, the claim follows.
\end{proof}

\vspace{10pt}

\begin{proposition}\label{prop: nonnengative stratum}
For $K\subseteq J\subseteq \dyn$, we have 
\begin{equation*}
\Xp{K,J}=\left\{[x;y]\in X(\fan) \ \left| \ \begin{cases}
x_i=0\quad \text{if}\quad i\in K\\
x_i>0\quad \text{if}\quad i\notin K
\end{cases}
\hspace{-10pt},\hspace{5pt} 
\begin{cases}
y_i=0\quad \text{if}\quad i\notin J\\
y_i=1\quad \text{if}\quad i\in J
\end{cases}\right\} \right. .
\end{equation*}
\end{proposition}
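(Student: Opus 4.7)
The plan is to prove the two inclusions separately, with the reverse inclusion being immediate from Lemma~\ref{lem: preparation for nonnegative strata} combined with the defining description of $X(\fan)_{\ge0}$. So I would first note that if $[x;y]$ lies in the right hand side, then the given representative $(x;y)$ has all entries nonnegative, hence $[x;y]\in X(\fan)_{\ge0}$, and Lemma~\ref{lem: preparation for nonnegative strata} yields $[x;y]\in X(\fan)_{K,J}$; therefore the right hand side is contained in $\Xp{K,J}$.

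For the nontrivial inclusion, I would start with an element $[x;y]\in\Xp{K,J}$ and first choose a nonnegative representative $(x;y)\in\C^{\ddyn}-\EL$ (possible since $[x;y]\in X(\fan)_{\ge0}$ by \cite[Proposition~12.2.1]{co-li-sc}). Since $[x;y]\in X(\fan)_{K,J}$, the orbit description \eqref{eq: definition of X sigma IKJ} together with the nonnegativity of the entries forces $x_i=0$ for $i\in K$ and $x_i>0$ for $i\notin K$, and similarly $y_i=0$ for $i\notin J$ and $y_i>0$ for $i\in J$. Thus the only remaining task is to rescale so that $y_i=1$ on $J$ while preserving the positivity conditions on the $x$-coordinates.

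To achieve the normalization, I would invoke Lemma~\ref{eq: alpha surjectivity} to pick $t\in T_{>0}$ with $\alpha_i(t)=y_i^{-1}$ for $i\in J$ and, say, $\alpha_i(t)=1$ for $i\notin J$; such a $t$ exists by the bijectivity of the map $T_{>0}\to(\R_{>0})^{\yn}$, $t\mapsto(\alpha_1(t),\dots,\alpha_{n-1}(t))$. Acting by this $t$ as in \eqref{eq: def of T-action on C2r} replaces $y_i$ by $\alpha_i(t)y_i$, which is $1$ for $i\in J$ and remains $0$ for $i\notin J$, and replaces $x_i$ by $\varpi_i(t)x_i$. Since $t\in T_{>0}$ we have $\varpi_i(t)>0$, so this preserves the signs of the $x$-coordinates, giving exactly a representative of the required form. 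This shows $\Xp{K,J}$ is contained in the right hand side, completing the proof.

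I do not expect a substantial obstacle: the only subtle point is that one needs the full bijection in Lemma~\ref{eq: alpha surjectivity} (not just surjectivity of the $\varpi_i$ map) in order to prescribe the scaling on the $y$-coordinates and then automatically control the $x$-coordinates via $\varpi_i(t)>0$. The argument hinges on the positivity of the $\varpi_i$ on $T_{>0}$, which comes for free from $\varpi_i(t)=t_1\cdots t_i$.
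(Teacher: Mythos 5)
Your proposal is correct and follows essentially the same route as the paper's proof: both inclusions are handled the same way, with the key step being a rescaling by an element of $T_{>0}$ obtained from Lemma~\ref{eq: alpha surjectivity} so that the $y$-coordinates on $J$ become $1$, while positivity of $\varpi_i$ on $T_{>0}$ preserves the sign pattern of the $x$-coordinates. The only (harmless) difference is that you normalize the nonnegative representative directly, whereas the paper first passes through Lemma~\ref{lem: preparation for nonnegative strata} and then matches the two representatives by a $T_{>0}$-action.
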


\begin{proof}
Denote the right hand of the desired equality by $\Xpp{K,J}$, and let us show that 
$\Xp{K,J} =\Xpp{K,J}$.
We have
\begin{equation*}
\Xpp{K,J} \subseteq X(\fan)_{K,J}\cap X(\fan)_{\ge 0}=\Xp{K,J}
\end{equation*}
by the definitions of $X(\fan)_{K,J}$ and $\Xp{K,J}$.
Hence, it suffices to show that $\Xp{K,J} \subseteq \Xpp{K,J}$.
To this end, take an arbitrary element 
\begin{equation}\label{eq: xy in intersection}
[x;y]\in \Xp{K,J}=X(\fan)_{K,J}\cap X(\fan)_{\ge 0}.
\end{equation}
Since $[x;y]\in X(\fan)_{K,J}$, Lemma~\ref{lem: preparation for nonnegative strata} implies that we may assume that $x$ and $y$ satisfy 
\begin{equation}\label{eq: nonnegative stratum 10}
\begin{cases}
x_i=0\quad \text{if}\quad i\in K\\
x_i\neq 0\quad \text{if}\quad i\notin K
\end{cases}
\quad \text{and} \quad
\begin{cases}
y_i=0\quad \text{if}\quad i\notin J\\
y_i=1\quad \text{if}\quad i\in J .
\end{cases}
\end{equation}
By \eqref{eq: xy in intersection}, we also have $[x;y]\in X(\fan)_{\ge 0}$ which means that there exists $(x';y')\in\C^{\ddyn}-\EL$ such that
\begin{itemize}\vspace{5pt}
\item[(1)] $x'_i,y'_i\ge0$ for $i\in \dyn$, \vspace{5pt}
\item[(2)] $[x;y] = [x';y']$.
\end{itemize}\vspace{5pt}
The condition (2) implies that we have $x_i\ne0$ if and only if $x'_i \ne0$ (similarly, $y_i\ne0$ if and only if $y'_i \ne0$) since the action of $T$ on $\C^{\ddyn}-\EL$ (given in \eqref{eq: def of T-action on C2r}) preserves the condition that a given component is zero or not.
Hence, these two conditions and \eqref{eq: nonnegative stratum 10} imply that we have
\begin{equation*}
\begin{cases}
x'_i=0\quad \text{if}\quad i\in K\\
x'_i>0\quad \text{if}\quad i\notin K
\end{cases}
\quad \text{and} \quad
\begin{cases}
y'_i=0\quad \text{if}\quad i\notin J\\
y'_i>0\quad \text{if}\quad i\in J .
\end{cases}
\end{equation*}
Comparing this and \eqref{eq: nonnegative stratum 10}, it follows from Lemma~\ref{eq: alpha surjectivity} that there exits $t\in T_{>0}$ such that
\begin{align*}
 \big( \alpha_1(t)y'_1,\ldots,\alpha_{n-1}(t)y'_{n-1} \big) = \big( y_1,\ldots,y_{n-1} \big).
\end{align*}
By construction, we have 
\begin{align*}
 [x';y']=[t\cdot (x';y')]=[x'',y], 
\end{align*}
where we set $x''\coloneqq \big( \varpi_1(t)x'_1,\ldots,\varpi_{n-1}(t)x'_{n-1} \big)$.
Since we have $t\in T_{>0}$ and $x'_i\ge0$ for all $i\in \dyn$, it follows that 
\begin{equation*}
\begin{cases}
x''_i=0\quad \text{if}\quad i\in K\\
x''_i>0\quad \text{if}\quad i\notin K .
\end{cases}
\end{equation*}
This means that we have $[x'';y]\in \Xpp{K,J}$ since $y$ satisfies the latter condition in \eqref{eq: nonnegative stratum 10}.
 Now we obtained that $[x;y]=[x';y']=[x'';y]\in \Xpp{K,J}$, as desired.
\end{proof}

\vspace{10pt}

By this proposition, the largest stratum $\Xp{\emptyset,\dyn}$ is given by
\begin{equation*}
\Xp{\emptyset,\dyn}
=
\{[x_1,\ldots,x_{n-1};1,\ldots,1]\in X(\fan) \mid
x_i>0\ (i\in \dyn)
\} .
\end{equation*}
For the later use, we prepare the following lemma.

\begin{lemma}\label{lem: closure}
We have
\begin{equation*}
\overline{\Xp{\emptyset,\dyn}}= X(\fan)_{\ge0},
\end{equation*}
where the closure is taken in $X(\fan)_{\ge0}$.
\end{lemma}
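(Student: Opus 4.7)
The plan is to use the stratification \eqref{eq: decomp of X nonnegative KJ} together with the explicit description of strata in Proposition~\ref{prop: nonnengative stratum}, and to show by an elementary perturbation argument that every stratum $\Xp{K,J}$ is contained in the closure of the top-dimensional stratum $\Xp{\emptyset,\dyn}$. The inclusion $\overline{\Xp{\emptyset,\dyn}}\subseteq X(\fan)_{\ge 0}$ is automatic since the closure is taken inside $X(\fan)_{\ge 0}$ itself, so only the reverse inclusion requires argument.

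For the nontrivial direction, I would fix an arbitrary point $[x;y]\in\Xp{K,J}$ for some $K\subseteq J\subseteq \dyn$, choosing the canonical representative provided by Proposition~\ref{prop: nonnengative stratum} (so $x_i=0$ for $i\in K$, $x_i>0$ for $i\notin K$, $y_i=1$ for $i\in J$, and $y_i=0$ for $i\notin J$). For each $\epsilon>0$, I would then construct a perturbation $(x^{\epsilon};y^{\epsilon})\in\R_{>0}^{\ddyn}$ by replacing the vanishing entries with $\epsilon$, leaving the others unchanged; concretely, $x_i^{\epsilon}=\epsilon$ for $i\in K$ and $y_i^{\epsilon}=\epsilon$ for $i\notin J$. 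Because all coordinates of $(x^{\epsilon};y^{\epsilon})$ are strictly positive, this tuple lies in $\C^{\ddyn}-\EL$ and visibly represents a point in both $X(\fan)_{\emptyset,\dyn}$ and $X(\fan)_{\ge 0}$; to exhibit the canonical form required by Proposition~\ref{prop: nonnengative stratum}, I would apply Lemma~\ref{eq: alpha surjectivity} to pick the unique $t\in T_{>0}$ with $\alpha_i(t)y_i^{\epsilon}=1$ for all $i\in\dyn$ and verify that $t\cdot(x^{\epsilon};y^{\epsilon})$ has all $y$-entries equal to $1$ and all $x$-entries positive, so that $[x^{\epsilon};y^{\epsilon}]\in\Xp{\emptyset,\dyn}$.

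Finally, since $(x^{\epsilon};y^{\epsilon})\to(x;y)$ in $\C^{\ddyn}-\EL$ as $\epsilon\to 0^+$ and the quotient map $\C^{\ddyn}-\EL\to X(\fan)$ is continuous, the family $[x^{\epsilon};y^{\epsilon}]$ converges to $[x;y]$ in $X(\fan)$, and therefore also in the subspace $X(\fan)_{\ge 0}$. This will give $[x;y]\in\overline{\Xp{\emptyset,\dyn}}$ and, by the decomposition \eqref{eq: decomp of X nonnegative KJ}, finish the proof. I do not anticipate any real obstacle: the only subtle point is making sure the perturbation never hits the excluded locus $\EL$, which is immediate from positivity of all coordinates of $(x^{\epsilon};y^{\epsilon})$, and that the normalization step lands precisely in $\Xp{\emptyset,\dyn}$, which is handled by the bijectivity of \eqref{eq: varpi iso 3} established in Lemma~\ref{eq: alpha surjectivity}.
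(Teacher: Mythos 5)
Your proposal is correct and follows essentially the same route as the paper's own proof: the reverse inclusion is established by the identical $\varepsilon$-perturbation of the canonical representatives from Proposition~\ref{prop: nonnengative stratum}, followed by letting $\varepsilon\to 0^{+}$. The only difference is your extra normalization via Lemma~\ref{eq: alpha surjectivity} to put the perturbed point into canonical form, which is harmless but unnecessary, since positivity of all coordinates already places $[x^{\epsilon};y^{\epsilon}]$ in $X(\fan)_{\emptyset,\dyn}\cap X(\fan)_{\ge 0}=\Xp{\emptyset,\dyn}$ directly from the definitions.
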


\begin{proof}
It suffices to show that the inclusion $\overline{\Xp{\emptyset,\dyn}}\supseteq X(\fan)_{\ge0}$ holds.
For an element $[x;y]\in X(\fan)_{\ge0}$, the decomposition \eqref{eq: decomp of X nonnegative KJ} of $X(\fan)_{\ge0}$ means that we have  $[x;y]\in \Xp{K,J}$ for some $K\subseteq J\subseteq \dyn$. Thus, we may assume by Proposition~\ref{prop: nonnengative stratum} that
\begin{equation*}
\begin{cases}
x_i=0\quad \text{if}\quad i\in K\\
x_i>0\quad \text{if}\quad i\notin K
\end{cases}
\hspace{-10pt},\hspace{5pt} 
\begin{cases}
y_i=0\quad \text{if}\quad i\notin J\\
y_i=1\quad \text{if}\quad i\in J .
\end{cases}
\end{equation*}
Using these $x$ and $y$, we consider an element $[x';y']\in X(\fan)$ given by
\begin{equation*}
 x'_i 
=
\begin{cases}
 \varepsilon \quad &(i\in K)\\
 x_i &(i\notin K)
\end{cases}
\quad \text{and} \quad 
y'_i 
=
\begin{cases}
 \varepsilon \quad &(i\notin J)\\
 y_i &(i\in J),
\end{cases}
\end{equation*}
where $\varepsilon>0$ is a positive real number.
By construction, we have $x'_i, y'_i>0$ for all $i\in[n-1]$ which means that the element $[x';y']$ belongs to both of $X(\fan)_{\emptyset,\dyn}$ (see \eqref{eq: definition of X sigma IKJ}) and $X(\fan)_{\ge0}$. That is, we have
\begin{equation*}
 [x';y']\in X(\fan)_{\emptyset,\dyn}\cap X(\fan)_{\ge0}=\Xp{\emptyset,\dyn}.
\end{equation*}
By taking the limit $\varepsilon\rightarrow 0$, we obtain that
\begin{equation*}
 \lim_{\varepsilon\rightarrow 0} \ [x';y'] = [x;y]
\end{equation*}
which means that the given element $[x;y]$ lies in the closure $\overline{\Xp{\emptyset,\dyn}}$.
\end{proof}

\vspace{5pt}

For a projective toric variety $X'$, it is well-known that the nonnegative part $X'_{\ge0}$ has a structure of a polytope. In fact, there is a lattice polytope $P'$ whose normal fan describe the toric variety $X'$, and the moment map associated to $P'$ provides a homeomorphism between $X'_{\ge0}$ and $P'$ (e.g.\ \cite[Sect.\ 12.2]{co-li-sc} or \cite[Sect.\ 4.2]{fu}). Under the moment map, the nonnegative part of each torus orbit corresponds to a relative interior of each face of $P'$.
In \cite[Proposition~4.4]{ab-ze23}, the authors showed that the toric orbifold $X(\fan)$ is projective. In particular, there is a simple lattice polytope which is homeomorphic to $X(\fan)_{\ge0}$ in the above sense.
In the next subsection, we describe this polytope explicitly.

\vspace{10pt}

\subsection{The Polytope $P_{n-1}$}\label{subsec: polytope}
In this subsection, we construct a full-dimensional lattice polytope in $\R^{n-1}$ whose normal fan is $\fan$.
This polytope was originally constructed in Horiguchi-Masuda-Shareshian-Song (\cite{ho-ma-sh-so21}) by cutting the permutohedron. To describe the polytope explicitly, we rather start from a concrete definition.

We say that a subset $J\subseteq[n-1]$ is \textit{connected} if it is of the form
\begin{align*}
 J=\{a,a+1,\ldots,b\}
\end{align*}
for some $a,b\in[n-1]$. 
For a connected subset 
$J=\{a,a+1,\ldots,b\}\subseteq[n-1]$, we set
\begin{align*}
 v_J \coloneqq \sum_{i=a}^b (i+1-a )(b+1-i) e_i .
\end{align*}
Noticing that $|J|=b+1-a$, this can be written in the coordinate expression as
\begin{align*}
 v_J = (0,\ldots,0,p, 2(p-1), 3(p-2),\ldots,(p-1)2, p,0,\ldots,0)
 \qquad (p= |J|), 
\end{align*} 
where the non-zero coordinates start from the $a$-th component and end at the $b$-th component.
For example, if $n=8$, then for $J=\{\ \ , 2,3,4,5, \ \ ,\ \ \}\subseteq[7]$ we have $$v_J=(0,4,6,6,4,0,0)$$ since $|J|=4$, and for $J=\{\ \ , \ \ ,3,4,5,6,\ \ \}\subseteq[7]$ we have $$v_J=(0,0,4,6,6,4,0)$$ by the same reason.
By definition, we have
\begin{align*}
 \bm{e}_i\circ v_J = (i+1-a )(b+1-i)
 \quad \text{for $i\in J=\{a,a+1,\ldots,b\}$}.
\end{align*}
We note that the same equality holds even when $i=a-1$ and $i=b+1$ since both sides are zero in those cases, where we take the convention $\bm{e}_0=\bm{e}_n=0$. Thus, we obtain that
\begin{align}\label{eq: polytope 20}
 \bm{e}_i\circ v_J = (i+1-a )(b+1-i)
 \quad \text{for $i\in\{a-1,a,\ldots,b,b+1\}$}.
\end{align}
For a general subset $J\subseteq[n-1]$, we take the decomposition $J=J_1\sqcup\cdots\sqcup J_m$ into the connected components, and set
\begin{align*}
 v_J \coloneqq v_{J_1}+\cdots+v_{J_m}\in \R^{n-1}
\end{align*}
with the convention $v_{\emptyset}\coloneqq\bm{0}$.
For example, if $n=12$ and $J=\{\ \ ,2,3,4,5,\ \ ,\ \ ,8,9,10, \ \ \}$, then
\begin{align*}
 v_J
 =
 (0,4,6,6,4,0,0,3,4,3,0)
 \in\R^{11} .
\end{align*}

\begin{definition}
We set $ P_{n-1}$ as the convex hull of the points $v_J\in \R^{n-1}$ for $J\subseteq[n-1]$$:$
\begin{align*}
 P_{n-1} \coloneqq \text{\rm Conv}(v_J \mid J\subseteq [n-1]) \subseteq \R^{n-1} .
\end{align*}
\end{definition}

\vspace{10pt}

Note that the zero vector $\bm{0}$ and the standard vectors $\bm{e}_i$ ($1\le i\le n-1$) belong to $P_{n-1}$ since we have $v_{\emptyset}=\bm{0}$ and $v_{\{i\}}=\bm{e}_i$ for $1\le i\le n-1$. This means that 
$P_{n-1}$ contains an $(n-1)$-dimensional simplex, and hence it follows that $P_{n-1}$ is a full-dimensional polytope in $\R^{n-1}$, that is,
\begin{align}\label{eq: polytope 30}
 \dim P_{n-1} = n-1.
\end{align}

\vspace{10pt}

\begin{example}\label{ex: polytope n=3,4}
{\rm
The polytopes $P_2$ and $P_3$ are depicted in Figure~\ref{pic: P2 and P3}. We encourage the reader to find the points $v_{\emptyset}$, $v_{\{2\}}$, $v_{\{1,3\}}$ in the picture of $P_3$.
\begin{figure}[htbp]
\hspace{-20pt}
{\unitlength 0.1in%
\begin{picture}(52.2100,19.3000)(7.3000,-25.1000)%
%
\special{pn 8}%
\special{pa 1600 2200}%
\special{pa 1600 800}%
\special{fp}%
\special{sh 1}%
\special{pa 1600 800}%
\special{pa 1580 867}%
\special{pa 1600 853}%
\special{pa 1620 867}%
\special{pa 1600 800}%
\special{fp}%
%
\special{pn 8}%
\special{pa 1400 2000}%
\special{pa 2800 2000}%
\special{fp}%
\special{sh 1}%
\special{pa 2800 2000}%
\special{pa 2733 1980}%
\special{pa 2747 2000}%
\special{pa 2733 2020}%
\special{pa 2800 2000}%
\special{fp}%
%
\special{pn 0}%
\special{sh 0.200}%
\special{pa 2000 2000}%
\special{pa 1600 2000}%
\special{pa 1600 1600}%
\special{pa 2400 1200}%
\special{pa 2000 2000}%
\special{ip}%
\special{pn 8}%
\special{pa 2000 2000}%
\special{pa 1600 2000}%
\special{pa 1600 1600}%
\special{pa 2400 1200}%
\special{pa 2000 2000}%
\special{pa 1600 2000}%
\special{fp}%
\put(20.0000,-22.2000){\makebox(0,0)[lb]{$v_{\{1\}}=(1,0)$}}%
\put(7.3000,-16.0000){\makebox(0,0)[lb]{$v_{\{2\}}=(0,1)$}}%
\put(22.6000,-11.3000){\makebox(0,0)[lb]{$v_{\{1,2\}}=(2,2)$}}%
\put(8.3000,-22.0000){\makebox(0,0)[lb]{$v_{\emptyset}=(0,0)$}}%
%
\special{sh 1.000}%
\special{ia 1600 2000 20 20 0.0000000 6.2831853}%
\special{pn 8}%
\special{ar 1600 2000 20 20 0.0000000 6.2831853}%
%
\special{sh 1.000}%
\special{ia 1600 1600 20 20 0.0000000 6.2831853}%
\special{pn 8}%
\special{ar 1600 1600 20 20 0.0000000 6.2831853}%
%
\special{sh 1.000}%
\special{ia 2000 2000 20 20 0.0000000 6.2831853}%
\special{pn 8}%
\special{ar 2000 2000 20 20 0.0000000 6.2831853}%
%
\special{sh 1.000}%
\special{ia 2400 1200 20 20 0.0000000 6.2831853}%
\special{pn 8}%
\special{ar 2400 1200 20 20 0.0000000 6.2831853}%
\put(17.9000,-17.8000){\makebox(0,0)[lb]{$P_2$}}%
%
\special{pn 8}%
\special{pa 4125 2437}%
\special{pa 4125 580}%
\special{fp}%
\special{sh 1}%
\special{pa 4125 580}%
\special{pa 4105 647}%
\special{pa 4125 633}%
\special{pa 4145 647}%
\special{pa 4125 580}%
\special{fp}%
%
\special{pn 8}%
\special{pa 3790 1948}%
\special{pa 5647 2504}%
\special{fp}%
\special{sh 1}%
\special{pa 5647 2504}%
\special{pa 5589 2466}%
\special{pa 5596 2489}%
\special{pa 5577 2504}%
\special{pa 5647 2504}%
\special{fp}%
%
\special{pn 8}%
\special{pa 3790 2170}%
\special{pa 5951 1350}%
\special{fp}%
\special{sh 1}%
\special{pa 5951 1350}%
\special{pa 5882 1355}%
\special{pa 5901 1369}%
\special{pa 5896 1392}%
\special{pa 5951 1350}%
\special{fp}%
%
\special{pn 8}%
\special{pa 4436 2144}%
\special{pa 5200 2009}%
\special{dt 0.045}%
%
\special{pn 0}%
\special{sh 0.200}%
\special{pa 4436 1787}%
\special{pa 4123 1695}%
\special{pa 4609 1214}%
\special{pa 5718 795}%
\special{pa 4436 1787}%
\special{ip}%
\special{pn 8}%
\special{pa 4436 1787}%
\special{pa 4123 1695}%
\special{pa 4609 1214}%
\special{pa 5718 795}%
\special{pa 4436 1787}%
\special{pa 4123 1695}%
\special{fp}%
%
\special{pn 0}%
\special{sh 0.200}%
\special{pa 4436 1787}%
\special{pa 4123 1695}%
\special{pa 4123 2046}%
\special{pa 4436 2139}%
\special{pa 4436 1787}%
\special{ip}%
\special{pn 8}%
\special{pa 4436 1787}%
\special{pa 4123 1695}%
\special{pa 4123 2046}%
\special{pa 4436 2139}%
\special{pa 4436 1787}%
\special{pa 4123 1695}%
\special{fp}%
%
\special{pn 0}%
\special{sh 0.200}%
\special{pa 4436 1782}%
\special{pa 4436 2144}%
\special{pa 5195 2009}%
\special{pa 5718 795}%
\special{pa 4436 1787}%
\special{pa 4436 1782}%
\special{ip}%
\special{pn 8}%
\special{pa 4436 1782}%
\special{pa 4436 2144}%
\special{pa 5195 2009}%
\special{pa 5718 795}%
\special{pa 4436 1787}%
\special{pa 4436 2144}%
\special{fp}%
%
\special{pn 8}%
\special{pa 4123 2040}%
\special{pa 5400 1557}%
\special{dt 0.045}%
%
\special{pn 8}%
\special{pa 4486 1915}%
\special{pa 5201 2015}%
\special{dt 0.045}%
%
\special{pn 8}%
\special{pa 4486 1905}%
\special{pa 4603 1221}%
\special{dt 0.045}%
\put(33.5000,-16.6000){\makebox(0,0)[lb]{$v_{\{3\}}$}}%
\put(34.2000,-18.6000){\makebox(0,0)[lb]{$=(0,0,1)$}}%
%
\special{sh 1.000}%
\special{ia 4120 1690 20 20 0.0000000 6.2831853}%
\special{pn 8}%
\special{ar 4120 1690 20 20 0.0000000 6.2831853}%
%
\special{sh 1.000}%
\special{ia 4120 2040 20 20 0.0000000 6.2831853}%
\special{pn 8}%
\special{ar 4120 2040 20 20 0.0000000 6.2831853}%
%
\special{sh 1.000}%
\special{ia 4430 2140 20 20 0.0000000 6.2831853}%
\special{pn 8}%
\special{ar 4430 2140 20 20 0.0000000 6.2831853}%
%
\special{sh 1.000}%
\special{ia 4430 1790 20 20 0.0000000 6.2831853}%
\special{pn 8}%
\special{ar 4430 1790 20 20 0.0000000 6.2831853}%
%
\special{sh 1.000}%
\special{ia 4600 1220 20 20 0.0000000 6.2831853}%
\special{pn 8}%
\special{ar 4600 1220 20 20 0.0000000 6.2831853}%
%
\special{sh 1.000}%
\special{ia 5190 2010 20 20 0.0000000 6.2831853}%
\special{pn 8}%
\special{ar 5190 2010 20 20 0.0000000 6.2831853}%
%
\special{sh 1.000}%
\special{ia 5710 800 20 20 0.0000000 6.2831853}%
\special{pn 8}%
\special{ar 5710 800 20 20 0.0000000 6.2831853}%
%
\special{pn 0}%
\special{sh 0.200}%
\special{pa 4760 1600}%
\special{pa 5010 1600}%
\special{pa 5010 1790}%
\special{pa 4760 1790}%
\special{pa 4760 1600}%
\special{ip}%
\special{pn 8}%
\special{pa 4760 1600}%
\special{pa 5010 1600}%
\special{pa 5010 1790}%
\special{pa 4760 1790}%
\special{pa 4760 1600}%
\special{ip}%
\put(48.0000,-17.6000){\makebox(0,0)[lb]{$P_3$}}%
\put(41.6000,-9.8000){\makebox(0,0)[lb]{$v_{\{2,3\}}=(0,2,2)$}}%
\put(54.6000,-7.4000){\makebox(0,0)[lb]{$v_{\{1,2,3\}}=(3,4,3)$}}%
\put(52.5000,-21.3000){\makebox(0,0)[lb]{$v_{\{1,2\}}=(2,2,0)$}}%
%
\special{pn 0}%
\special{sh 0}%
\special{pa 4560 2160}%
\special{pa 5020 2160}%
\special{pa 5020 2390}%
\special{pa 4560 2390}%
\special{pa 4560 2160}%
\special{ip}%
\special{pn 8}%
\special{pa 4560 2160}%
\special{pa 5020 2160}%
\special{pa 5020 2390}%
\special{pa 4560 2390}%
\special{pa 4560 2160}%
\special{ip}%
\put(42.6000,-23.3000){\makebox(0,0)[lb]{$v_{\{1\}}=(1,0,0)$}}%
\end{picture}}%
\caption{The polytopes $P_2$ and $P_3$}
\label{pic: P2 and P3}
\end{figure}
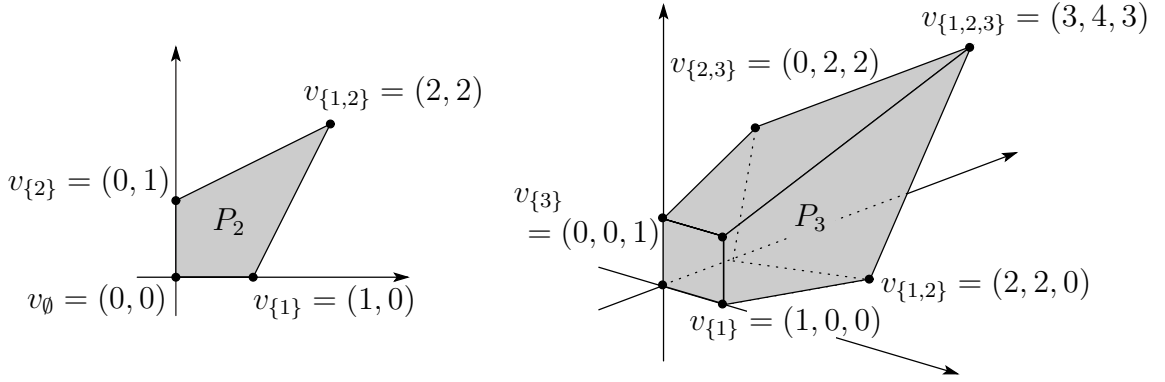
}
\end{example}

In the rest of this section, we set 
\begin{align*}
 -\alpha^{\vee}_i \coloneqq \bm{e}_{i-1}-2\bm{e}_i+\bm{e}_{i+1} \in \R^{\yn}
\end{align*}
for $i\in[n-1]$ with the convention $\bm{e}_0=\bm{e}_n=0$.
For $a,b\in\R^{n-1}$, we denote the standard inner product of $a$ and $b$ by $a\circ b$.

\begin{lemma}\label{lem: polytope ineq ei}
For $1\le i\le n-1$ and $J\subseteq[n-1]$, the following hold.
\begin{itemize}
 \item[$(1)$] We have $(-\alpha^{\vee}_i) \circ v_J \ge -2$. Moreover, the equality holds if and only if $i\in J$. \vspace{5pt}
 \item[$(2)$] We have $\bm{e}_i \circ v_J \ge0$. Moreover, the equality holds if and only if $i\notin J$.
\end{itemize}
\end{lemma}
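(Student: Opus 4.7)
My plan is to reduce both parts to a per-component calculation using the decomposition $v_J = \sum_{\ell=1}^{m} v_{J_\ell}$, where $J = J_1 \sqcup \cdots \sqcup J_m$ is the decomposition of $J$ into connected components $J_\ell = \{a_\ell, \ldots, b_\ell\}$.

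Part (2) is essentially unwinding the definition. By linearity, $\bm{e}_i \circ v_J = \sum_\ell \bm{e}_i \circ v_{J_\ell}$, and the coordinate expression of each $v_{J_\ell}$ gives $\bm{e}_i \circ v_{J_\ell} = (i+1-a_\ell)(b_\ell+1-i) > 0$ when $i \in J_\ell$ and $\bm{e}_i \circ v_{J_\ell} = 0$ otherwise. Since the $J_\ell$ are disjoint and $J = \bigsqcup_\ell J_\ell$, the total sum vanishes precisely when $i \notin J$ and is positive otherwise.

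For part (1), I will first note that \eqref{eq: polytope 20} extends in the following sense: setting $P_\ell(j) \coloneqq (j+1-a_\ell)(b_\ell+1-j)$, one has $\bm{e}_j \circ v_{J_\ell} = P_\ell(j)$ for every $j$ in the extended range $\{a_\ell-1, a_\ell, \ldots, b_\ell, b_\ell+1\}$ (consistent with the convention $\bm{e}_0 = \bm{e}_n = 0$ when $a_\ell = 1$ or $b_\ell = n-1$), while $\bm{e}_j \circ v_{J_\ell} = 0$ outside this range. Expanding
\[
(-\alpha^\vee_i) \circ v_J \;=\; \sum_{\ell=1}^m \big( \bm{e}_{i-1} - 2\bm{e}_i + \bm{e}_{i+1} \big) \circ v_{J_\ell} ,
\]
I will compute the contribution of each $J_\ell$ according to the position of $i$. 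When $i \in J_\ell$, all three indices $i-1, i, i+1$ lie in the extended range, so the contribution equals the second finite difference $P_\ell(i-1) - 2P_\ell(i) + P_\ell(i+1)$; since $P_\ell$ is a quadratic in $j$ with leading coefficient $-1$, this equals $-2$. When $i = a_\ell - 1$ or $i = b_\ell + 1$, a direct evaluation gives the contribution $|J_\ell|$. In every other case, all three summands vanish and the contribution is $0$.

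Assembling these contributions proves (1). If $i \in J$, a unique $J_\ell$ contains $i$ and contributes $-2$, while every other $J_{\ell'}$ contributes $0$: indeed the equalities $i = a_{\ell'} - 1$ or $i = b_{\ell'} + 1$ would place $i\pm 1$ into $J_{\ell'}$ alongside $i \in J_\ell$, contradicting the maximality of connected components. Hence $(-\alpha^\vee_i) \circ v_J = -2$. If $i \notin J$, every contribution is either $0$ or a positive integer, so $(-\alpha^\vee_i) \circ v_J \ge 0 > -2$. The main obstacle is really only the bookkeeping: confirming that the boundary conventions $\bm{e}_0 = \bm{e}_n = 0$ are compatible with the extended range and that the ``adjacent'' cases $i = a_\ell - 1$ and $i = b_\ell + 1$ are counted correctly even when two distinct components sit on both sides of $i$.
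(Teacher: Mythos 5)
Your proposal is correct and takes essentially the same route as the paper: decompose $J$ into connected components, use \eqref{eq: polytope 20} to see that the component containing $i$ contributes exactly $-2$ (your second-finite-difference phrasing is the paper's direct expansion), and check that the remaining components contribute nonnegatively. The only cosmetic difference is in the case $i\notin J$, where you tabulate the per-component contributions ($0$ or $|J_\ell|$) explicitly, while the paper shortcuts via part (2): $(-\alpha^{\vee}_i)\circ v_J=\bm{e}_{i-1}\circ v_J+\bm{e}_{i+1}\circ v_J\ge 0$.
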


\begin{proof}
Let us first prove the claim (2). Since $\bm{e}_i \circ v_J$ is the $i$-th component of $v_J$, the claim follows by the definition of $v_J$.

We prove the claim (1) in what follows.
We first consider the case $i\in J$, and we prove that $(-\alpha^{\vee}_i) \circ v_J=-2$.
Take the decomposition $J=J_1\sqcup\cdots \sqcup J_m$ into the connected components.
Since $i\in J$, we have $i\in J_{\ic}$ for some $1\le \ic\le m$.
Writing 
\begin{align*}
 J_{\ic}=\{a,a+1,\ldots,b\}
\end{align*}
for some $a,b\in \dyn$, we have from \eqref{eq: polytope 20} that
\begin{align*}
 (-\alpha^{\vee}_i) \circ v_{J_{\ic}}
 &= (\bm{e}_{i-1}-2\bm{e}_i+\bm{e}_{i+1})\circ v_{J_{\ic}} \\
 &= (i-a)(b+2-i) -2(i+1-a)(b+1-i) + (i+2-a)(b-i) \\
 &= -2
\end{align*}
by a direct calculation.
Since two connected components of $J$ are separated at least by an integer, the condition $i\in J_{\ic}$ implies that $i-1, i, i+1 \notin J_{\ell}$ for all $\ell\ne k$.
This and the claim (2) imply that
\begin{align*}
(-\alpha^{\vee}_i)\circ v_{J_{\ell}}
=
(\bm{e}_{i-1}-2\bm{e}_i+\bm{e}_{i+1})\circ v_{J_{\ell}} 
= 0
 \quad \text{for all $\ell\ne k$}.
\end{align*}
Since $v_J = v_{J_1}+\cdots+v_{J_m}$, we obtain $(-\alpha^{\vee}_i) \circ v_J=-2$ in this case.

For the case $i\notin J$, we prove that $(-\alpha^{\vee}_i) \circ v_J\ge0$ which completes the proof. This assumption means that we have $\bm{e}_i\circ v_J=0$ by the claim (2).
Since $-\alpha^{\vee}_i=\bm{e}_{i-1}-2\bm{e}_i+\bm{e}_{i+1}$, this implies that
\begin{align*}
 (-\alpha^{\vee}_i) \circ v_J = \bm{e}_{i-1}\circ v_J+0+\bm{e}_{i+1}\circ v_J \ge0,
\end{align*}
as desired.
\end{proof}

\vspace{10pt}

Motivated by Lemma~\ref{lem: polytope ineq ei}, we set 
\begin{align*}
 &\FP{i}
 \coloneqq 
 \text{\rm Conv}(v_J \mid i\in J) \subseteq P_{n-1}, \\
 &\FM{i}
 \coloneqq 
 \text{\rm Conv}(v_J \mid i\notin J) \subseteq P_{n-1}
\end{align*}
for $1\le i\le n-1$.
Lemma~\ref{lem: polytope ineq ei} means that these are faces of $P_{n-1}$.
For example, in the picture of $P_3$ depicted in Figure~\ref{pic: P2 and P3}, one can see that $\FP{3}$ and $\FM{3}$ are the top facet and the bottom facet of $P_{2}$ (with respect to the third axis), respectively.

\vspace{10pt}

\begin{proposition}\label{prop: polytope facets}
For $1\le i\le n-1$, $\FP{i}$ and $\FM{i}$ are facets of $P_{n-1}$.
\end{proposition}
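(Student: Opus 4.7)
The plan is to use the fact, already noted in the text immediately after Lemma \ref{lem: polytope ineq ei}, that $\FP{i}$ and $\FM{i}$ are faces of $P_{n-1}$. Since $\dim P_{n-1} = n-1$ by \eqref{eq: polytope 30}, it then suffices to verify that each of these faces has dimension exactly $n-2$. By Lemma \ref{lem: polytope ineq ei}, $\FP{i}$ is contained in the affine hyperplane $\{v\in \R^{n-1}\mid (-\alpha^{\vee}_i)\circ v=-2\}$ and $\FM{i}$ is contained in the linear hyperplane $\{v\in \R^{n-1}\mid \bm{e}_i\circ v=0\}$, which gives the upper bound $\dim \FP{i},\dim \FM{i}\le n-2$ immediately. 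The main task is therefore to produce $n-1$ affinely independent vertices in each face to establish the matching lower bound.

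For $\FM{i}$, I will exhibit the $n-1$ vertices $v_\emptyset=\bm{0}$ together with $v_{\{j\}}=\bm{e}_j$ for $j\in[n-1]\setminus\{i\}$. All of these lie in $\FM{i}$ since none of the indexing sets contains $i$, and after translating by $-v_\emptyset$ the remaining $n-2$ vectors are distinct standard basis vectors, hence linearly independent. Therefore $\dim \FM{i}\ge n-2$, and combined with the upper bound $\FM{i}$ is a facet.

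For $\FP{i}$, the natural candidate set is $v_{\{i\}}=\bm{e}_i$ together with $v_{\{i\}\cup\{j\}}$ for $j\in[n-1]\setminus\{i\}$, giving $n-1$ vertices in $\FP{i}$. When $j$ is not adjacent to $i$ one has $v_{\{i,j\}}=\bm{e}_i+\bm{e}_j$, while in the adjacent cases one has $v_{\{i-1,i\}}=2\bm{e}_{i-1}+2\bm{e}_i$ and $v_{\{i,i+1\}}=2\bm{e}_i+2\bm{e}_{i+1}$ (which appear only when $i\ge 2$ and $i\le n-2$ respectively). Translating by $-v_{\{i\}}=-\bm{e}_i$, the $n-2$ difference vectors are $\bm{e}_j$ for $j\in[n-1]\setminus\{i-1,i,i+1\}$ together with $2\bm{e}_{i-1}+\bm{e}_i$ and/or $2\bm{e}_{i+1}+\bm{e}_i$. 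A direct linear independence check (each of the two special vectors introduces a coordinate in position $i-1$ or $i+1$ that appears in no other vector on the list) shows these $n-2$ vectors are linearly independent, so $\dim \FP{i}\ge n-2$ and we conclude that $\FP{i}$ is a facet as well.

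The main obstacle, such as it is, is simply the bookkeeping for $\FP{i}$ at the extreme indices $i=1$ and $i=n-1$, where one of the two special vertices $v_{\{i-1,i\}}$ or $v_{\{i,i+1\}}$ drops out of the construction; in both boundary cases one still produces $n-2$ linearly independent difference vectors using the remaining one together with the standard basis vectors, so the argument proceeds uniformly.
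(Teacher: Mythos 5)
Your proof is correct and follows essentially the same route as the paper: both use Lemma~\ref{lem: polytope ineq ei} to see that $\FP{i}$ and $\FM{i}$ are proper faces and then exhibit the vertices $v_\emptyset, v_{\{j\}}$ (for $\FM{i}$) and $v_{\{i\}}, v_{\{i,j\}}$ (for $\FP{i}$) to force dimension $n-2$. The only difference is cosmetic: where you check linear independence of the difference vectors $2\bm{e}_{i\pm1}+\bm{e}_i$ and $\bm{e}_j$ directly, the paper verifies the same lower bound by projecting $\FP{i}$ onto $\R^{n-2}$ along the $i$-th coordinate and noting the image contains an $(n-2)$-simplex.
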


\begin{proof}
Since Lemma~\ref{lem: polytope ineq ei} implies that $\FP{i}$ and $\FM{i}$ are non-empty proper faces of $P_{n-1}$, it suffices to show that their dimensions are equal to $n-2$.

To compute the dimension of $\FM{i}$, we observe that $\FM{i}$ contains $\bm{0}$ and $\bm{e}_k$ for all $k\ne i$ since $v_{\emptyset}=\bm{0}$ and $v_{\{k\}}=\bm{e}_k$. Thus, $\FM{i}$ contains an $(n-2)$-dimensional simplex. Since $\FM{i}$ is a proper face of $P_{n-1}$ as we saw above, we conclude that $\dim \FM{i}=n-2$.

To compute the dimension of $\FP{i}$, we consider a projection
$\R^{n-1}\rightarrow \R^{n-2}$ 
which maps $(x_1,\ldots,x_{n-1})\in \R^{n-1}$ to
\begin{align*}
 (x_1,\ldots,x_{i-1},x_{i+1},\ldots,x_{n-1})\in\R^{n-2}.
\end{align*}
In particular, this map sends $\bm{e}_i\in\R^{n-1}$ to the zero vector $\bm{0}\in\R^{n-2}$.
The image of $\FP{i}$ under this projection is a convex polytope since the map is linear.
It contains the zero vector $\bm{0}\in\R^{n-2}$ which is the image of $v_{\{i\}}=\bm{e}_i\in \FP{i}$. It also contains a nonzero multiple of each standard basis vector of $\R^{n-2}$ since it is the images of $v_{\{i,k\}}\in \FP{i}$ for some $k\ne i$ which is given by
\begin{align*}
 v_{\{i,k\}} = 
 \begin{cases}
  2\bm{e}_i+2\bm{e}_k \quad &\text{if $\{i,k\}$ is connected},\\
  \bm{e}_i+\bm{e}_k &\text{otherwise}.
 \end{cases}
\end{align*}
Hence, the image of $\FP{i}$ contains an $(n-2)$-dimensional simplex in $\R^{n-2}$. Since $\FP{i}$ is a proper face of $P_{n-1}$ as we saw above, this implies that $\dim \FP{i}=n-2$.
\end{proof}

\vspace{5pt}

Now, the following claim is a corollary of Lemma~\ref{lem: polytope ineq ei} and Proposition~\ref{prop: polytope facets}.

\begin{corollary}\label{cor: normal vector}
For $1\le i\le n-1$, the vectors $-\alpha^{\vee}_i$ and $\bm{e}_i$ are inward normal vectors of the facets $\FP{i}$ and $\FM{i}$, respectively.
\end{corollary}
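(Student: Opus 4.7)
The plan is to read the corollary directly off Lemma~\ref{lem: polytope ineq ei} together with Proposition~\ref{prop: polytope facets}. Recall that for a full-dimensional polytope $P\subseteq\R^{n-1}$ and a facet $F\subseteq P$, a vector $w\in\R^{n-1}$ is an inward normal to $F$ precisely when there exists $c\in\R$ with $w\circ x\ge c$ for every $x\in P$ and $w\circ x=c$ for every $x\in F$. So the whole task is to exhibit such a constant $c$ for each of $(-\alpha^{\vee}_i,\FP{i})$ and $(\bm{e}_i,\FM{i})$.

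First I would handle $\FM{i}$ with candidate normal $\bm{e}_i$ and $c=0$. Lemma~\ref{lem: polytope ineq ei}(2) gives $\bm{e}_i\circ v_J\ge0$ for every $J\subseteq\dyn$, so by linearity of $x\mapsto \bm{e}_i\circ x$ and the fact that $P_{n-1}=\text{Conv}(v_J\mid J\subseteq\dyn)$, we obtain $\bm{e}_i\circ x\ge0$ for all $x\in P_{n-1}$. The equality clause of the lemma says $\bm{e}_i\circ v_J=0$ iff $i\notin J$, which is exactly the condition defining the vertex set of $\FM{i}=\text{Conv}(v_J\mid i\notin J)$. Again by linearity, $\bm{e}_i\circ x=0$ on all of $\FM{i}$. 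Combined with Proposition~\ref{prop: polytope facets}, which tells us $\FM{i}$ is genuinely a facet (not some lower-dimensional face), this identifies $\bm{e}_i$ as an inward normal to $\FM{i}$.

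The argument for $\FP{i}$ with candidate normal $-\alpha^{\vee}_i$ and $c=-2$ is identical in form. Lemma~\ref{lem: polytope ineq ei}(1) provides the bound $(-\alpha^{\vee}_i)\circ v_J\ge-2$ on every vertex $v_J$ of $P_{n-1}$, extending by linearity to $(-\alpha^{\vee}_i)\circ x\ge-2$ for all $x\in P_{n-1}$, and the equality clause gives $(-\alpha^{\vee}_i)\circ v_J=-2$ precisely for $J\ni i$, which are the vertices of $\FP{i}$. Linearity then yields $(-\alpha^{\vee}_i)\circ x=-2$ throughout $\FP{i}$, and Proposition~\ref{prop: polytope facets} ensures $\FP{i}$ is a facet, so $-\alpha^{\vee}_i$ is its inward normal.

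There is no real obstacle here; the proof is just a clean packaging of the already-established inequalities with the already-established facet property, so the corollary follows in a short paragraph.
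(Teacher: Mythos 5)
Your proposal is correct and matches the paper's intent: the paper states this corollary without a separate proof, as an immediate consequence of Lemma~\ref{lem: polytope ineq ei} (the vertex inequalities with their equality conditions, extended over the convex hull) and Proposition~\ref{prop: polytope facets} (that $\FP{i}$ and $\FM{i}$ are facets), which is exactly the packaging you give with $c=-2$ and $c=0$.
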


\vspace{10pt}

We will show later that there is no other facets of $P_{n-1}$ other than $\FPM{i}$ for $1\le i\le n-1$ (see Proposition~\ref{prop: polytope ineq -alphai} below). Before proving it, let us describe a few properties of these facets $\FP{i}$ and $\FM{i}$.

\begin{lemma}\label{lem: up and bottom do not intersect}
For $i\in \dyn$, we have $\FP{i}\cap \FM{i}=\emptyset$.
\end{lemma}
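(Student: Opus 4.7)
The plan is to separate the two facets using the linear functional $x \mapsto \bm{e}_i \circ x$, which is the projection onto the $i$-th coordinate. By Lemma~\ref{lem: polytope ineq ei}(2), we have $\bm{e}_i \circ v_J \ge 0$ for every $J \subseteq [n-1]$, with equality if and only if $i \notin J$. Equivalently, the $i$-th coordinate of $v_J$ is $0$ when $i \notin J$ and strictly positive when $i \in J$.

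Suppose for contradiction that $x \in \FP{i} \cap \FM{i}$. Since $x \in \FM{i}$, we may write $x = \sum_{J \not\ni i} \mu_J v_J$ with $\mu_J \ge 0$ and $\sum \mu_J = 1$, so
\begin{equation*}
\bm{e}_i \circ x = \sum_{J \not\ni i} \mu_J \, (\bm{e}_i \circ v_J) = 0
\end{equation*}
because each term vanishes. On the other hand, since $x \in \FP{i}$, we may write $x = \sum_{J \ni i} \lambda_J v_J$ with $\lambda_J \ge 0$ and $\sum \lambda_J = 1$; since at least one $\lambda_J$ is strictly positive and every $\bm{e}_i \circ v_J$ with $i \in J$ is strictly positive by Lemma~\ref{lem: polytope ineq ei}(2), we obtain
\begin{equation*}
\bm{e}_i \circ x = \sum_{J \ni i} \lambda_J \, (\bm{e}_i \circ v_J) > 0.
\end{equation*}
These two equalities contradict each other, so $\FP{i} \cap \FM{i} = \emptyset$.

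There is essentially no obstacle here: the argument is a direct application of Lemma~\ref{lem: polytope ineq ei}(2), which already encodes the fact that $\bm{e}_i$ is the normal vector cutting out $\FM{i}$ and that all vertices of $\FP{i}$ lie strictly on the positive side of the corresponding hyperplane $\{\bm{e}_i \circ x = 0\}$. One could alternatively give a symmetric argument using the functional $x \mapsto (-\alpha^{\vee}_i) \circ x$ and Lemma~\ref{lem: polytope ineq ei}(1), noting that on $\FM{i}$ this functional takes values $> -2$ (strictly) while on $\FP{i}$ it is identically $-2$; but the coordinate-projection argument above is the more transparent route.
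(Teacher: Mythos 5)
Your proof is correct and follows essentially the same route as the paper: both arguments write an alleged point of $\FP{i}\cap \FM{i}$ as a convex combination of the vertices $v_J$ in two ways and derive the contradiction $\bm{e}_i\circ x>0$ versus $\bm{e}_i\circ x=0$ via Lemma~\ref{lem: polytope ineq ei}(2). Nothing is missing.
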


\begin{proof}
We suppose that there is an element $x\in \FP{i}\cap \FM{i}$, and we deduce a contradiction.
Since $x\in \FP{i}=\text{\rm Conv}(v_J \mid i\in J)$, it can be written as an average of the vectors $v_J$ whose index $J$ contains $i$:
\begin{align*}
 x = \sum_{\substack{J\subseteq[n-1], \\ \text{$J$ contains $i$}}} \lambda_J v_J
\end{align*}
with $\lambda_J\ge0$ for all such $J$ and $\sum\lambda_J=1$.
For each $v_J$ appearing in this expression, we have $\bm{e}_i\circ v_J>0$ since $i\in J$ (Lemma~\ref{lem: polytope ineq ei}). 
Thus, it follows that
\begin{align*}
 \bm{e}_i\circ x > 0.
\end{align*}
We also have $x\in \FM{i}=\text{\rm Conv}(v_J \mid i\notin J)$ by the assumption, and an argument similar to that above shows 
\begin{align*}
 \bm{e}_i\circ x = 0
\end{align*}
since we have $\bm{e}_i\circ v_J=0$ for each $J$ which does not contain $i$ (Lemma~\ref{lem: polytope ineq ei}). 
Therefore, we obtain a contradiction, as desired.
\end{proof}

\vspace{10pt}

\begin{lemma}\label{lem: contain i J}
For $J\subseteq \dyn$ and $i\in \dyn$, we have
\begin{itemize}
 \item[(1)] $v_J \in \FP{i}$ if and only if $i\in J$, \vspace{5pt}
 \item[(2)] $v_J \in \FM{i}$ if and only if $i\notin J$.
\end{itemize}
\end{lemma}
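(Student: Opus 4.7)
The plan is to reduce each of the two equivalences in Lemma~\ref{lem: contain i J} to Lemma~\ref{lem: polytope ineq ei} by testing the given point $v_J$ against the inward normal vector of the appropriate facet.

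The ``if'' direction in both (1) and (2) is immediate from the definitions of $\FP{i}$ and $\FM{i}$ as convex hulls: if $i\in J$ then $v_J$ is one of the vertices listed in $\text{Conv}(v_{J'}\mid i\in J')$, and similarly for $\FM{i}$.

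For the ``only if'' direction of (2), I would argue as follows. Suppose $v_J\in \FM{i}$, so $v_J=\sum_{J'\not\ni i}\lambda_{J'} v_{J'}$ with $\lambda_{J'}\ge 0$ and $\sum \lambda_{J'}=1$. By Lemma~\ref{lem: polytope ineq ei}(2), every summand satisfies $\bm{e}_i\circ v_{J'}=0$, so $\bm{e}_i\circ v_J=0$. Applying Lemma~\ref{lem: polytope ineq ei}(2) to $v_J$ itself then forces $i\notin J$.

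For the ``only if'' direction of (1), I would use the same pairing trick with $-\alpha^\vee_i$ in place of $\bm{e}_i$: if $v_J\in \FP{i}$, then writing $v_J$ as a convex combination of $v_{J'}$ with $i\in J'$ and using Lemma~\ref{lem: polytope ineq ei}(1) gives $(-\alpha^\vee_i)\circ v_{J'}=-2$ for each summand, hence $(-\alpha^\vee_i)\circ v_J=-2$; the equality case of Lemma~\ref{lem: polytope ineq ei}(1) then yields $i\in J$. There is no real obstacle here; the argument is entirely formal once Lemma~\ref{lem: polytope ineq ei} is in hand, and the only thing one has to be careful about is that one is reading off the equality case of that lemma in the correct direction.
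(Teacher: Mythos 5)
Your proposal is correct. The only difference from the paper is in how the ``only if'' directions are organized: the paper deduces them from Lemma~\ref{lem: up and bottom do not intersect} (the disjointness $\FP{k}\cap\FM{k}=\emptyset$), arguing that $v_J\in\FP{k}$ for some $k\notin J$ would place $v_J$ in $\FP{k}\cap\FM{k}$, a contradiction; note that the disjointness lemma is itself proved by exactly the $\bm{e}_k$-pairing argument you use. You instead run the pairing arguments directly from Lemma~\ref{lem: polytope ineq ei}, using the $\bm{e}_i$ equality case for (2) and, for (1), the $-\alpha^{\vee}_i$ equality case (which the paper's proof of this lemma never needs). Both routes are valid and essentially equivalent in substance; the paper's version is marginally shorter because it reuses the already-established disjointness and only ever pairs against $\bm{e}_i$, while yours is symmetric in the two inward normals and self-contained modulo Lemma~\ref{lem: polytope ineq ei}. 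Your linearity step (a convex combination of points all pairing to $-2$, resp.\ $0$, again pairs to $-2$, resp.\ $0$) is exactly what is needed, so there is no gap.
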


\begin{proof}
We first prove (1). 
By definition, we have $v_J \in \FP{i}$ for all $i\in J$.
Moreover, if $v_J\in \FP{k}$ for some $k\in \dyn-J$, then it follows that $v_J\in \FP{k}\cap \FM{k}$ since $k\notin J$ which contradicts to $\FP{k}\cap \FM{k}=\emptyset$ (Lemma~\ref{lem: up and bottom do not intersect}).
The claim (2) follows by an argument similar to this.
\end{proof}

\vspace{10pt}

For subsets $K,L\subseteq\dyn$, 
Lemma~\ref{lem: up and bottom do not intersect} means that the 
intersection of $\FP{i}$ for $i\in K$ and $\FM{i}$ for $i\in L$ is empty unless $K\cap L=\emptyset$.
Writing $J=\dyn-L$, this condition is equivalent to $K\subseteq J$ which we also saw when we define the cone $\ctc{K}{J}$ in  \eqref{eq: def of cone 2}. This leads us to consider the following faces of $P_{n-1}$.
For $K\subseteq J\subseteq \dyn$, we set
\begin{align}\label{eq: def of FKJ'}
 \FCE{K}{J} \coloneqq \Bigg( \bigcap_{i\in K}\FP{i} \Bigg) \cap \Bigg( \bigcap_{i\notin J}\FM{i} \Bigg) \subseteq P_{n-1}
\end{align}
with the convention $\FCE{\emptyset}{\dyn}\coloneqq P_{n-1}$.
As we saw above, the condition $K\subseteq J$ ensures that there is no index $i$ appearing in common in \eqref{eq: def of FKJ'}.
Moreover, we note that $\FCE{K}{J}\ne\emptyset$ since we have $v_K,v_J\in\FCE{K}{J}$ because of the condition $K\subseteq J$. 

\vspace{10pt}

\begin{example}
{\rm
Let $n=4$. In this case, our polytope is $P_3$ depicted in Figure~\ref{pic: P2 and P3}. We visualize some faces of $P_{3}$.
For example, $\FCE{\{1\}}{\{1,2\}}=\FP{1}\cap \FM{3}$ is the edge joining two vertices $v_{\{1\}}$ and $v_{\{1,2\}}$.
Also, $\FCE{\{2,3\}}{\{2,3\}}=\FP{2}\cap\FP{3}\cap\FM{1}=\{v_{\{2,3\}}\}$ is a vertex.
The largest face $\FCE{\emptyset}{\{1,2\}}$ is the polytope $P_{2}$ itself  by definition.
}
\end{example}

\vspace{10pt}

We now prove the following by using the faces $\FCE{K}{J}$.

\begin{proposition}\label{prop: polytope ineq -alphai}
The facets of $P_{n-1}$ are exactly $\FPM{i}$ for $1\le i\le n-1$.
\end{proposition}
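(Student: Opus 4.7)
The plan is to compute the inward normal cone of $P_{n-1}$ at each vertex $v_J$ and then read off all facets from its rays. Since Proposition~\ref{prop: polytope facets} has already established that each $\FPM{i}$ is a facet, only the absence of additional facets remains to be shown. As a preliminary step, I would verify that each $v_J$ is actually a vertex; combined with the fact that the $v_J$ generate the convex hull by definition, this identifies the full vertex set as $\{v_J\}_{J\subseteq [n-1]}$. For this, the linear functional $\ell_J(x)\coloneqq\sum_{i\in J}\alpha^\vee_i\circ x-\sum_{i\notin J}\bm{e}_i\circ x$ attains its maximum on $P_{n-1}$ uniquely at $v_J$, because the two upper bounds in Lemma~\ref{lem: polytope ineq ei} are simultaneously saturated on $v_{J'}$ precisely when $J=J'$.

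Next, set $C_J\coloneqq\mathrm{cone}(\{-\alpha^\vee_i\mid i\in J\}\cup\{\bm{e}_i\mid i\notin J\})$, which in the notation of \eqref{eq: def of cone 2} is the top-dimensional cone $\ctc{J}{J}\in\fan$. Lemma~\ref{lem: polytope ineq ei} shows that every generator of $C_J$ lies in the inward normal cone $N_J$ of $P_{n-1}$ at $v_J$, giving $C_J\subseteq N_J$. The key algebraic fact I would establish is that $C_J$ is a full-dimensional simplicial cone: projecting $\R^{n-1}$ onto the coordinates indexed by $J$ annihilates each $\bm{e}_i$ ($i\notin J$) and, after decomposing $J$ into its connected components, sends $\{-\alpha^\vee_i\mid i\in J\}$ to a block-diagonal matrix whose blocks are negatives of type-$A$ Cartan matrices, each of which is nonsingular.

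The main step is then to upgrade $C_J\subseteq N_J$ to an equality. Suppose for contradiction that $N_J\supsetneq C_J$. Since $N_J$ is a full-dimensional closed convex cone we have $\overline{\mathrm{int}(N_J)}=N_J$, so $\mathrm{int}(N_J)\setminus C_J$ is a nonempty open subset of $\R^{n-1}$. Because $\fan$ is a complete fan by \cite[Corollary~3.6 and Proposition~4.4]{ab-ze23}, the union $\bigcup_{J'}\mathrm{int}(C_{J'})$ is open and dense in $\R^{n-1}$, so we may choose $w\in\mathrm{int}(N_J)\cap\mathrm{int}(C_{J'})$ for some $J'$ (necessarily $J'\ne J$, since $w\notin C_J$). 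The inclusion $C_{J'}\subseteq N_{J'}$ together with openness of $\mathrm{int}(C_{J'})$ in $\R^{n-1}$ forces $w\in\mathrm{int}(N_{J'})$. Since the interior of the normal cone at a vertex is precisely the set of functionals uniquely minimized there, $v_{J'}$ uniquely minimizes $x\mapsto w\circ x$ on $P_{n-1}$; but $v_J$ uniquely minimizes as well by the same reasoning, forcing $v_J=v_{J'}$, a contradiction. Hence $N_J=C_J$.

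With $N_J=C_J$ in hand, the facets of $P_{n-1}$ containing $v_J$ correspond bijectively to the extremal rays of $N_J$, which by Corollary~\ref{cor: normal vector} are the inward normals of $\FP{i}$ ($i\in J$) and $\FM{i}$ ($i\notin J$). Since every facet of $P_{n-1}$ contains some vertex $v_J$, every facet appears among $\FPM{i}$ for $1\le i\le n-1$, and combined with Proposition~\ref{prop: polytope facets} this gives the claim. The most delicate point will be the density-and-normal-cone argument in the third paragraph, which depends crucially on completeness of $\fan$.
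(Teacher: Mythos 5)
Your proof is correct and follows essentially the same route as the paper: both establish that each full-dimensional cone $\ctc{J}{J}\in\fan$ is contained in the corresponding maximal cone of the normal fan of $P_{n-1}$ (in your formulation, the vertex normal cone $N_J$) and then invoke completeness of $\fan$ to force equality of the top-dimensional cones, after which all facet normals are read off from the rays. Your density/unique-minimizer argument is just a spelled-out version of the paper's ``no space left for other full-dimensional cones'' step, and your explicit verification that each $v_J$ is a vertex is a harmless addition.
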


\begin{proof}
Let $\nfan$ be the normal fan of the polytope $P_{n-1}$, where we consider the set of integral points $\Z^{n-1}\subseteq \R^{n-1}$ as the lattice for the fan $\fan$. We show that the rays in $\nfan$ are in one-to-one correspondence to the facets $\FPM{i}$ for $1\le i\le n-1$.

For $K\subseteq J\subseteq \dyn$, we saw that the intersection $\FCE{K}{J}$ defined in \eqref{eq: def of FKJ'} is a non-empty face of $P_{n-1}$.
Let $\tau'_{K,J}\in\nfan$ be the cone corresponding to the face $\FCE{K}{J}$.
By \eqref{eq: def of FKJ'}, we have
\begin{align*}
 \FCE{K}{J} \subseteq \FP{i} \ \ \text{for $i\in K$}
 \quad \text{and} \quad
 \FCE{K}{J} \subseteq \FM{i} \ \ \text{for $i\notin J$}.
\end{align*}
In the normal fan $\fan'$, this means that
\begin{align*}
 \text{Cone}(-\alpha^{\vee}_i) \subseteq \tau'_{K,J} \ \ \text{for $i\in K$}
 \quad \text{and} \quad
 \text{Cone}(\bm{e}_{i}) \subseteq \tau'_{K,J} \ \ \text{for $i\notin J$}.
\end{align*}
In fact, these form a subset of rays of $\tau'_{K,J}$ (\cite[Proposition~2.3.7 (b)]{co-li-sc}), and we have
\begin{align}\label{eq: def of sigma KL 2}
 \ctc{K}{J} \subseteq \tau'_{K,J}, 
\end{align}
where $\ctc{K}{J} = \text{Cone}(\{ -\alpha^{\vee}_i \mid i\in K \}\cup \{ \bm{e}_{i} \mid i \notin J \})$ is the cone defined in \eqref{eq: def of cone 2}.
Recall from \eqref{eq: def of fan 1} and \eqref{eq: def of fan 2} that the fan $\fan$ is given by
\begin{align*}
 \fan = \{ \ctc{K}{J} \mid K\subseteq J\subseteq \dyn \}
 =\{ \sigma_{K,L} \mid K,J\subseteq \dyn, \ K\cap L=\emptyset \}.
\end{align*}
In \cite[Proposition~4.3, Remark~3.8]{ab-ze23}, the authors proved the fan $\fan$ is complete.
Let us denote by $\fan(k)$ and $\nfan(k)$ the set of $k$-dimensional cones in $\fan$ and $\nfan$, respectively ($0\le k\le n-1$).
The completeness of $\fan$ means that the set of full-dimensional cones $\ctc{K}{J}\in\fan(n-1)$ cover the entire space $\R^{n-1}$.
This and \eqref{eq: def of sigma KL 2} imply that
\begin{align*}
 \ctc{K}{J} = \tau'_{K,J}
 \quad \text{for all $\ctc{K}{J}\in \fan(n-1)$}.
\end{align*}
Since these full-dimensional cones $\ctc{K}{J}\in \fan(n-1)$ cover $\R^{n-1}$, there is no space left in $\R^{n-1}$ for which other full-dimensional cones in $\nfan(n-1)$ can fit in.
Thus, it follows that
\begin{align}\label{eq: def of sigma KL 5}
 \fan(n-1) = \nfan(n-1).
\end{align}

By definition, the rays in $\fan$ are $\text{cone}( -\alpha^{\vee}_i)$ and $\text{cone}( \bm{e}_i)$ for $1\le i\le n-1$ which are generated by normal vectors of the facets $\FP{i}$ and $\FM{i}$ of $P_{n-1}$, respectively. 
This means that we have $\fan(1)\subseteq \nfan(1)$.
To prove that there is no facet of $P_{n-1}$ other than $\FPM{i}$ for $1\le i\le n-1$, it is enough show that $\fan(1)=\nfan(1)$.
To see this, let $\rho\in\nfan(1)$ be an arbitrary ray. It corresponds to a facet of $P_{n-1}$, and this facet contains a vertex of $P_{n-1}$, say $v$. Since $P_{n-1}$ is a full-dimensional polytope in $\R^{n-1}$, the vertex $v$ corresponds to a full dimensional cone $\sigma_{\text{full}}\in\nfan$. This means that $\rho$ is a ray (i.e.\ a $1$-dimensional face) of $\sigma_{\text{full}}$ (\cite[Proposition~2.3.7 (b)]{co-li-sc}). 
Now, \eqref{eq: def of sigma KL 5} implies that we have $\sigma_{\text{full}}=\ctc{K}{J}$ for some $\ctc{K}{J}\in\fan(n-1)$. In particular, its ray $\rho$ belongs to $\fan(1)$. Hence, we conclude that $\fan(1)=\nfan(1)$, as desired.
\end{proof}

\vspace{5pt}

In the next claim, we take the set of integral points $\Z^{n-1}\subseteq \R^{n-1}$ as the lattice of $\R^{n-1}$.

\begin{corollary}\label{cor: simple polytope}
$P_{n-1}\subseteq\R^{n-1}$ is a 
full-dimensional simple lattice polytope.
\end{corollary}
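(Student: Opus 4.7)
The statement packages three assertions about $P_{n-1}$: full-dimensionality, integrality of vertices, and simplicity. The plan is to reduce each to facts already established, with the simplicity claim being the only one that needs a genuine argument.

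Full-dimensionality is immediate from equation~(eq: polytope 30), which records $\dim P_{n-1}=n-1$. For the lattice property, since $P_{n-1}$ is by construction the convex hull of the finite set $\{v_J \mid J\subseteq \dyn\}\subseteq \Z^{n-1}$, every vertex of $P_{n-1}$ must be one of the $v_J$'s, and hence lies in $\Z^{n-1}$. So the only substantive content is simplicity.

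The plan for simplicity is to invoke the standard equivalence: a full-dimensional polytope is simple if and only if every top-dimensional cone in its normal fan is simplicial (equivalently, each vertex lies on exactly $n-1$ facets). Let $\nfan$ denote the normal fan of $P_{n-1}$ with respect to the lattice $\Z^{n-1}\subseteq \R^{n-1}$. The proof of Proposition~\ref{prop: polytope ineq -alphai} already produced the equality
\begin{equation*}
\fan(n-1) = \nfan(n-1)
\end{equation*}
of top-dimensional cones (this is equation~(eq: def of sigma KL 5) in the preceding argument). Since $\fan$ is a simplicial fan by \cite[Corollary~3.6]{ab-ze23}, every cone in $\fan(n-1)$ is simplicial, hence every cone in $\nfan(n-1)$ is simplicial. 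This is precisely the defining property of a simple polytope, so $P_{n-1}$ is simple.

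There is no real obstacle here; the corollary is essentially a bookkeeping consequence of Proposition~\ref{prop: polytope ineq -alphai} combined with the simpliciality of $\fan$ from \cite{ab-ze23}. The only point requiring a moment of care is that the lattice we use for forming the normal fan of $P_{n-1}$ is the same $\Z^{n-1}$ used to define the lattice of $\fan$, which is transparent from the construction of the $v_J$'s and of the ray generators $-\alpha_i^{\vee}, \bm{e}_i$ in the preceding sections.
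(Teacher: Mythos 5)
Your proposal is correct, but your simplicity argument follows a genuinely different route from the paper's. The paper argues directly on the polytope: since the facets are exactly $\FPM{i}$ ($1\le i\le n-1$) by Proposition~\ref{prop: polytope ineq -alphai}, Lemma~\ref{lem: contain i J} shows that each $v_J$ lies on precisely the facets $\FP{k}$ for $k\in J$ and $\FM{j}$ for $j\notin J$, i.e.\ on exactly $n-1$ facets, which is the definition of a simple polytope. You instead pass to the normal fan: you quote the equality $\fan(n-1)=\nfan(n-1)$ established as \eqref{eq: def of sigma KL 5} inside the proof of Proposition~\ref{prop: polytope ineq -alphai}, import the simpliciality of $\fan$ from \cite[Corollary~3.6]{ab-ze23}, and use the standard equivalence that a full-dimensional polytope is simple if and only if the maximal cones of its normal fan are simplicial. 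This is valid (the equivalence is standard, and \eqref{eq: def of sigma KL 5} is already available at this point in the text), and it has the advantage of making no further use of the facet-membership combinatorics. What it costs is twofold: you lean on an equation internal to another proof rather than on a stated result (the statement of Proposition~\ref{prop: polytope ineq -alphai} alone would not suffice, since it identifies the facets but not which ones meet at a given vertex), and you outsource the real content to \cite{ab-ze23}, whereas the paper's count via Lemma~\ref{lem: contain i J} is self-contained and, as a byproduct, records exactly which facets contain each vertex --- information that the paper immediately reuses to identify all faces as the $\FCE{K}{J}$, compute their dimensions, and deduce Corollary~\ref{cor: normal fan}. Also note that, as you implicitly acknowledge, the lattice plays no role in the simplicity argument; your remark about matching lattices is only relevant to the later identification of $\nfan$ with $\fan$ as fans of lattice cones, not to this corollary. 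The treatment of full-dimensionality via \eqref{eq: polytope 30} and of integrality of the vertices coincides with the paper's.
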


\begin{proof}
By the definition of $P_{n-1}$ and \eqref{eq: polytope 30}, it is clear that $P_{n-1}$ is a lattice polytope of full-dimension in $\R^{n-1}$.
We show that the polytope $P_{n-1}$ is simple.
By definition, the set of vertices of $P_{n-1}$ is a subset of $\{v_J\mid J\subseteq[n-1]\}$.
It follows from Lemma~\ref{lem: contain i J} that
each $v_J\in P_{n-1}$ is contained in exactly $n-1$ facets $\FP{k}\ (k\in J)$ and $\FM{j}\ (j\notin J)$.
Thus, each vertex of $P_{n-1}$ is contained in exactly $n-1$ facets. Thus, the claim follows.
\end{proof}

\vspace{5pt}

Recall that a face of an arbitrary convex polytope must be an intersection of its facets (\cite[Theorem~2.7(v)]{zi95}). Proposition~\ref{prop: polytope ineq -alphai} now implies that the set of non-empty faces of $P_{n-1}$ are given by $\FCE{K}{J}$ defined in \eqref{eq: def of FKJ'} for $K\subseteq J\subseteq\dyn$.
We note that
\begin{align*}
\begin{split}
 \dim \FCE{K}{J} &= (n-1)-|K|-|\dyn-J| 
 = |J|-|K|
 \end{split}
\end{align*}
since $P_{n-1}$ is a simple polytope.
In particular, the vertices of $P_{n-1}$ (i.e. the faces of dimension 0) are given by $\FCE{J}{J}=\{v_J\}$ for $J\subseteq\dyn$ which means that the vertex set of $P_{n-1}$ is precisely $\{v_J\mid J\subseteq[n-1]\}$.
Since the set of non-empty faces of $P_{n-1}$ are given by $\FCE{K}{J}$, we obtain the following

\begin{corollary}\label{cor: normal fan}
The normal fan of $P_{n-1}$ coincides with the fan 
\begin{align*}
 \fan = \{ \ctc{K}{J} \mid K\subseteq J\subseteq \dyn  \}
\end{align*}
described in \eqref{eq: def of fan 2}, where $\tau_{K,J}=\text{\rm cone}(\{ -\alpha^{\vee}_i \mid i\in K \}\cup \{ \bm{e}_{i} \mid i \notin J \})$ is the cone corresponding to the face $\FCE{K}{J}\subseteq P_{n-1}$. 
\end{corollary}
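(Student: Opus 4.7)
The plan is to identify, for every $K\subseteq J\subseteq\dyn$, the cone of the normal fan $\nfan$ corresponding to the face $\FCE{K}{J}$ of $P_{n-1}$, and to check that it equals $\ctc{K}{J}$. The paragraph preceding the corollary already establishes that every non-empty face of $P_{n-1}$ is of the form $\FCE{K}{J}$ for some such pair $(K,J)$, so carrying out this identification uniformly in $(K,J)$ will yield both the stated face-cone correspondence and the equality $\nfan=\fan$.

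The first step is to pin down the facets of $P_{n-1}$ containing $\FCE{K}{J}$. By the defining formula~\eqref{eq: def of FKJ'}, the facets $\FP{i}$ for $i\in K$ and $\FM{i}$ for $i\notin J$ all contain $\FCE{K}{J}$. For the reverse direction I would use the vertices $v_K$ and $v_J$, both of which lie in $\FCE{K}{J}$; this follows from Lemma~\ref{lem: contain i J} combined with the inclusion $K\subseteq J$. The same lemma then gives: if $i\notin K$ then $v_K\notin\FP{i}$, so $\FCE{K}{J}\not\subseteq\FP{i}$; and if $i\in J$ then $v_J\notin\FM{i}$, so $\FCE{K}{J}\not\subseteq\FM{i}$. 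Since Proposition~\ref{prop: polytope ineq -alphai} exhausts the facets of $P_{n-1}$ as $\FPM{i}$ for $i\in\dyn$, the facets containing $\FCE{K}{J}$ are exactly
\[
\{\FP{i}\mid i\in K\}\cup\{\FM{i}\mid i\notin J\}.
\]

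With this list in hand, the second step is immediate from the normal-fan formalism for simple polytopes. Since $P_{n-1}$ is simple by Corollary~\ref{cor: simple polytope}, the cone of $\nfan$ attached to a face is generated by the inward normals of the facets containing that face. By Corollary~\ref{cor: normal vector} these directions are $-\alpha^\vee_i$ for $i\in K$ and $\bm{e}_i$ for $i\notin J$, so the cone corresponding to $\FCE{K}{J}$ is
\[
\text{cone}\bigl(\{-\alpha^\vee_i\mid i\in K\}\cup\{\bm{e}_i\mid i\notin J\}\bigr)=\ctc{K}{J},
\]
which is precisely the expression in \eqref{eq: def of cone 2}.

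The only substantive step is the reverse direction in the facet list, i.e., excluding spurious facet containments; the rest is bookkeeping with the face lattice of a simple polytope. I expect this step to be routine, since Lemma~\ref{lem: contain i J} encodes exactly the vertex-facet incidences $v_J\in\FP{i}\Leftrightarrow i\in J$ and $v_J\in\FM{i}\Leftrightarrow i\notin J$ that the argument requires.
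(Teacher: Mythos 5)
Your proposal is correct and takes essentially the same route as the paper: the corollary there is obtained from exactly the ingredients you use (the facet classification of Proposition~\ref{prop: polytope ineq -alphai}, the inward normals of Corollary~\ref{cor: normal vector}, the classification of faces as the $\FCE{K}{J}$, and the vertex--facet incidences of Lemma~\ref{lem: contain i J}), with your explicit check via $v_K,v_J$ that no spurious facets contain $\FCE{K}{J}$ merely spelling out what the paper leaves implicit. The only minor remark is that simplicity is not actually needed for the fact that the normal cone of a face is generated by the inward normals of the facets containing it, so the appeal to Corollary~\ref{cor: simple polytope} is harmless but superfluous.
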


\vspace{10pt}

We end this subsection by determining the combinatorial type of the polytope $P_{n-1}$. Let
\begin{equation*}
[0,1]^{n-1}
\coloneqq \{(x_1,\ldots,x_{n-1})\in \R^{n-1} \mid 0\le x_i\le 1 \ (i\in\dyn)\}
\end{equation*}
be the standard cube of dimension $n-1$.
There are $2(n-1)$ facets of $[0,1]^{n-1}$; the ones given by $x_i=0$ for $i\in\dyn$ and the ones given by $x_i=1$ for $i\in\dyn$.
Hence, the faces of $[0,1]^{n-1}$ are intersection of these facets, and each of them can be written as
\begin{equation}\label{eq: face post of cube}
E_{K,J}\coloneqq
\left\{(x_1,\ldots,x_{n-1})\in [0,1]^{\yn} \ \left| \ \begin{array}{l} x_i=0\quad\text{if $i\in K$},\\
x_i=1\quad\text{if $i\notin J$}
\end{array}
\right.
\right\}
\end{equation}
for some $K\subseteq J\subseteq \dyn$ (so that $E_{K,J}\ne\emptyset$).
It is straightforward to verify that these faces satisfy
\begin{equation}\label{eq: face post of cube 2}
E_{K,J}\subseteq E_{K',J'}
\quad \text{if and only if} \quad
K'\subseteq K\subseteq J\subseteq J',
\end{equation}
where the equality holds if and only if $K=K'$ and $J=J'$.
The next claim means that $P_{n-1}$ is combinatorially equivalent to the cube $[0,1]^{n-1}$.

\begin{proposition}\label{prop: polytope bijection}
For the set of faces of $P_{n-1}$, we have 
\begin{equation*}
\FCE{K}{J}\subseteq \FCE{K'}{J'}
\quad \text{if and only if} \quad
K'\subseteq K\subseteq J\subseteq J',
\end{equation*}
where the equality holds if and only if $K=K'$ and $J=J'$. 
\end{proposition}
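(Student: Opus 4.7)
The plan is to reduce both directions to the description of the faces $\FCE{K}{J}$ in \eqref{eq: def of FKJ'} together with Lemma~\ref{lem: contain i J}, which tells us exactly which vertices $v_J$ lie on which facets $\FP{i}, \FM{i}$.

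For the ``if'' direction, I would observe that the defining intersection \eqref{eq: def of FKJ'} is covariant: if $K'\subseteq K$ then $\bigcap_{i\in K'}\FP{i} \supseteq \bigcap_{i\in K}\FP{i}$, and if $J\subseteq J'$ then $\dyn-J'\subseteq \dyn-J$ so $\bigcap_{i\notin J'}\FM{i} \supseteq \bigcap_{i\notin J}\FM{i}$. Intersecting these two containments yields $\FCE{K'}{J'}\supseteq \FCE{K}{J}$. The edge cases $K'=\emptyset$ and $J'=\dyn$ are handled by the convention $\FCE{\emptyset}{\dyn} = P_{n-1}$, which matches the empty-intersection convention.

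For the ``only if'' direction, I will use the specific vertices $v_K$ and $v_J$, which lie in $\FCE{K}{J}$ because $K\subseteq J$ (as already noted just after \eqref{eq: def of FKJ'}). Assuming $\FCE{K}{J}\subseteq \FCE{K'}{J'}$, both of these vertices must belong to $\FCE{K'}{J'}$. From $v_K\in \FCE{K'}{J'}$ we get $v_K\in \FP{i}$ for every $i\in K'$, so Lemma~\ref{lem: contain i J}(1) forces $i\in K$; hence $K'\subseteq K$. From $v_J\in \FCE{K'}{J'}$ we get $v_J\in \FM{i}$ for every $i\notin J'$, and Lemma~\ref{lem: contain i J}(2) forces $i\notin J$; hence $J\subseteq J'$. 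Combining these with the standing condition $K\subseteq J$ built into the definition of $\FCE{K}{J}$, I obtain $K'\subseteq K\subseteq J\subseteq J'$, as required.

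The equality clause is then automatic: if $\FCE{K}{J}=\FCE{K'}{J'}$, applying the just-proved equivalence to both inclusions gives $K'\subseteq K\subseteq J\subseteq J'$ and $K\subseteq K'\subseteq J'\subseteq J$ simultaneously, forcing $K=K'$ and $J=J'$. I do not foresee a hard step; the only subtlety is to remember to invoke the two ``test vertices'' $v_K,v_J$ (rather than a general point of $\FCE{K}{J}$), which is what converts the geometric inclusion into the combinatorial inequality via Lemma~\ref{lem: contain i J}.
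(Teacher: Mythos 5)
Your proposal is correct and follows essentially the same route as the paper's proof: the forward inclusion is read off directly from the defining intersection \eqref{eq: def of FKJ'}, and the converse uses exactly the test vertices $v_K,v_J\in \FCE{K}{J}$ together with Lemma~\ref{lem: contain i J} to extract $K'\subseteq K$ and $J\subseteq J'$, with the equality clause then following formally. No gaps.
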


\begin{proof}
If $K'\subseteq K\subseteq J\subseteq J'$, then it is clear that we have $\FCE{K}{J}\subseteq \FCE{K'}{J'}$ by the definition of $\FCE{K}{J}$ and $\FCE{K'}{J'}$.

Conversely, 
suppose that $\FCE{K}{J}\subseteq \FCE{K'}{J'}$.
Recall that we have 
\begin{align*}
v_K, v_J\in \FCE{K}{J}.
\end{align*}
In particular, it follows that $v_K\in \FCE{K'}{J'}$. Thus, the definition of $\FCE{K'}{J'}$ and Lemma~\ref{lem: contain i J} imply that $K'\subseteq K$.
Similarly, we have $v_J\in \FCE{K'}{J'}$, and hence we obtain that $J\subseteq J'$ by the definition of $\FCE{K'}{J'}$ and Lemma~\ref{lem: contain i J}. Thus, we conclude that $K'\subseteq K\subseteq J\subseteq J'$, as desired.

Lastly, we have $\FCE{K}{J}= \FCE{K'}{J'}$ if and only if $K=K'$ and $J=J'$ because of the first claim of this proposition.
\end{proof}

\vspace{10pt}

\subsection{Nonnegative part $X(\fan)_{\ge0}$ and the polytope $P_{n-1}$}\label{subsec: nonnegative and polytope}
In the last subsection, we showed that the normal fan of the polytope $P_{n-1}$ is precisely the fan $\fan$ for the toric orbifold $X(\fan)=(\C^{\ddyn}-\EL)/T$, where each $\ctc{K}{J}\in\fan$ is the cone corresponding to the face $\FCE{K}{J}\subseteq P_{n-1}$ for $K\subseteq J\subseteq \dyn$.
In Section~\ref{subsec: Orbit decomposition}, we also saw that $\ctc{K}{J}$ is the cone corresponding to the torus orbit $X(\fan)_{K,J}\subseteq X(\fan)$.
Therefore, $X(\fan)_{K,J}$ is the orbit corresponding to the face $\FCE{K}{J}$.
By using the language of relative interiors of faces, the correspondence becomes clearer; the (disjoint) decomposition of $X(\fan)$ by the torus orbits $X(\fan)_{K,J}$ corresponds to the (disjoint) decomposition of $P_{n-1}$ by the relative interiors $\Int(\FCE{K}{J})$ of the faces.

As is well-known in the theory of projective toric varieties (e.g.\ \cite[Sect.\ 12.2]{co-li-sc} or \cite[Sect.\ 4.2]{fu}), the above correspondence implies that there is the moment map  
\begin{equation*}
\mu \colon X(\fan)\rightarrow \R^{n-1}
\end{equation*}
determined by the full-dimensional simple lattice polytope $P_{n-1}\subseteq \R^{n-1}$ which satisfies the following property; it restricts to a homeomorphism 
\begin{equation*}
\overline{\mu} \colon X(\fan)_{\ge 0}\to P_{n-1}
\end{equation*}
such that 
\begin{equation*}
\overline{\mu}(\Xp{K,J})=\Int(\FCE{K}{J})\quad \text{for $K\subseteq J\subseteq \dyn$},
\end{equation*}
where $\Int(\FCE{K}{J})$ is the relative interior of the face $ \FCE{K}{J}\subseteq P_{n-1}$. 
In particular, each piece $\Xp{K,J}$ is homeomorphic to an open disk so that the decomposition \eqref{eq: decomp of X nonnegative KJ} of $X(\fan)_{\ge 0}$ is a cell decomposition.
See \cite[Sect.\ 12.2]{co-li-sc} or \cite[Sect.\ 4.2]{fu} for an explicit formula of the moment map $\mu$.

\vspace{10pt}

\begin{example}
{\rm
Let $n=4$ so that our polytope is $P_3$ depicted in Figure~\ref{pic: P2 and P3}.
The moment map gives us a homeomorphism
\begin{equation*}
 X(\fan)_{\ge0} \rightarrow P_3
\end{equation*}
which sends each stratum of $X(\fan)_{\ge0}$ to the corresponding face of $P_3$.
For example, by Proposition~\ref{prop: nonnengative stratum}, we have
\begin{equation*}
\Xp{\{1,2\},\{1,2,3\}}=
\left\{
[0,0,x_3;1,1,1]\in X(\fan) \mid
x_3>0
\right\} ,
\end{equation*}
and this is sent under the moment map to the open edge $\Int(\FCE{\{1,2\}}{\{1,2,3\}})=\Int(\FP{1}\cap\FP{2})$ connecting the vertices $v_{\{1,2\}}$ and $v_{\{1,2,3\}}$.
}
\end{example}

\vspace{20pt}

\section{Peterson Varieties}

In this section, we recall the definition of the Peterson variety, and we give a certain stratification of the Peterson variety according to K. Rietsch (\cite{ri03,ri06}).

\subsection{Peterson varietiy in $\G{n}/B^-$}
We keep the notation for $\G{n}=\G{n}(\C)$, $B^{\pm}$, $T$, $U^{\pm}$ from Section~\ref{sec: notations}.
Let $f$ be an $n\times n$ regular nilpotent matrix given by
\begin{align}\label{eq: def of f}
f = 
\begin{pmatrix}
0 &  & &  & \\
1 & 0 &  & \vspace{-5pt}\\ 
 & 1 & \ddots & & \vspace{-5pt}\\
 &    & \ddots & 0 & \\
 &    &  & 1 & 0 \\ 
\end{pmatrix}.
\end{align}

\begin{definition}\label{definition of complex Peterson variety}
The Peterson variety $Y$ in $\G{n}/B^-$ is defined as follows:
\begin{align*}
 Y\coloneqq  \{gB^-\in \G{n}/B^- \mid (\Ad{g^{-1}} f)_{i,j}=0 \ (j>i+1)\},
\end{align*}
where $(\Ad{g^{-1}} f)_{i,j}$ is the $(i,j)$-th component of the matrix $\Ad{g^{-1}} f=g^{-1}fg$.
\end{definition}

\vspace{10 pt}

\subsection{Richardson strata of $Y$}\label{subsec: A decomposition of XJ}\label{subsec: decomposition of Y}
We begin with fixing our notations.
For $i,j\in [n]$, we denote by $E_{i,j}$ the $n\times n$ matrix having $1$ at the $(i,j)$-th position and $0$'s elsewhere. Let 
$e_i \coloneqq E_{i,i+1}$ and $f_i \coloneqq E_{i+1,i}$
 for $i\in \dyn$.
We set
\begin{align*}
x_i(t)\coloneqq \exp(te_i) \in U^+(\subseteq \G{n})
\quad \text{and} \quad
y_i(t)\coloneqq \exp(tf_i) \in U^-(\subseteq \G{n})
\end{align*}
for $1\le i\le n-1$ and $t\in \C$.
For example, when $n=3$, we have 
\begin{align*}
x_1(t)= \begin{pmatrix}
      1 & t &0 \\
      0 & 1 &0\\
      0&0&1
   \end{pmatrix}
\quad \text{and} \quad
y_2(t)=\begin{pmatrix}
      1 & 0 &0 \\
      0 & 1 &0\\
      0 & t &1
   \end{pmatrix}
   \quad \text{for $t\in\C$}.
\end{align*}
For $i\in \dyn$, let 
\begin{align*}
\dot{s}_i\coloneqq  y_i(-1)x_i(1)y_i(-1).
\end{align*}
Then we have $\dot{s}_i\in N(T)$, where $N(T)$ is the normalizer of the maximal torus $T\subseteq \G{n}$.
For example, when $n=3$, then we have
\begin{align*}
\dot{s}_1=\begin{pmatrix}
0 & 1 & 0\\
-1& 0 & 0\\
0 & 0 & 1
\end{pmatrix}
\quad \text{and} \quad
\dot{s}_2=\begin{pmatrix}
1 & 0 & 0\\
0 & 0 & 1\\
0 & -1& 0
\end{pmatrix}.
\end{align*}
The Weyl group is defined by $W\coloneqq N(T)/T=\langle s_1,s_2,\cdots, s_{\yn}\rangle$, where $s_i \coloneqq \dot{s_i}T\in W$ for $i\in \dyn$.
In the rest of this paper, we identify the Weyl group $W$ and the permutation group $\Sn$ which corresponds $s_i$ to the simple reflection $(i,i+1)$ for $i\in\dyn$.

For $w\in \Sn$, we define a representative $\dot{w}\in \G{n}$ by
\begin{equation}\label{eq: def of representative}
 \dot{w}\coloneqq\dot{s}_{i_1}\dot{s}_{i_2}\cdots \dot{s}_{i_m}\in \G{n},
\end{equation}
where $s_{i_1}s_{i_2}\cdots s_{i_m}$ is a reduced expression of $w$. It follows from Matsumoto's criterion (\cite[Theorem~1.8]{ma99}) that the definition of $\dot{w}$ does not depend on a choice of a reduced expression of $w$ since the matrices $\dot{s}_1,\ldots,\dot{s}_{n-1}\in \G{n}$ satisfy 
\begin{itemize} \vspace{5pt}
 \item[(1)] $\dot{s}_i\dot{s}_j=\dot{s}_j\dot{s}_i$ for $i,j\in \dyn$ with $|i-j|\ge2$, \vspace{5pt}
 \item[(2)] $\dot{s}_i\dot{s}_{i+1}\dot{s}_i=\dot{s}_{i+1}\dot{s}_i\dot{s}_{i+1}$ for $i\in [n-2]$.
\end{itemize} \vspace{5pt}
For the longest element $w_0\in\mathfrak{S}_n$, the representative $\dot{w}_0$ is given by
\begin{equation}\label{eq: representative of w0}
\dot{w}_0=
\begin{pmatrix} 
  & & & & \!\!1\\
  & & & \!\!-1 & \\
  &  & \!\!1 & & \\
  & \!\!\!\!\rotatebox{75}{$\ddots$} & & & \vspace{-3pt}\\
  (-1)^{n-1}\!\!\!\! & & & & & 
\end{pmatrix}
\in \G{n}.
\end{equation}
We call $\dot{w}_0$ \textbf{the longest permutation matrix in $\G{n}$}.

\vspace{10pt}

For a subset $J\subseteq \dyn$, we may decompose it into the connected components as in Section~\ref{subsec: polytope}:
\begin{align}\label{eq: connected components}
 J = J_1 \sqcup J_2 \sqcup \cdots \sqcup J_{m},
\end{align}
where each $J_{\ic}\ (1\le \ic\le m)$ is a maximal connected subset of $J$. To determine each $J_{\ic}$ uniquely, we require that elements of $J_{\ic}$ are less than elements of $J_{{\ic}'}$ when $\ic< {\ic}'$.

For $J\subseteq \dyn$, let us introduce a Young subgroup of $\Sn$ given by
\begin{align*}
 \mathfrak{S}_J \coloneqq \mathfrak{S}_{J_1}\times \mathfrak{S}_{J_2}\times \cdots\times\mathfrak{S}_{J_m}
 \subseteq \Sn ,
\end{align*}
where each $\mathfrak{S}_{J_{\ic}}$ is the subgroup of $\Sn$ generated by the simple reflections $s_i$ for all $i\in J_{\ic}$. Let $w_J$ be the longest element of $\mathfrak{S}_J$, i.e., \ 
\begin{align*}
 w_J \coloneqq w_{J_1} w_{J_2} \cdots w_{J_{m}} \in \mathfrak{S}_J , 
\end{align*}
where each $w_{J_{\ic}}$ is the longest element of the permutation group $\mathfrak{S}_{J_{\ic}}$ $(1\le \ic\le m)$. 
It is well known that $w_K\le w_J$ in the Bruhat order if and only if $K\subseteq J$ (\cite[Lemma~2.4]{AHKZ} or \cite[Lemma~6]{Insko-Tymoczko}).
Listing reduced expressions of $w_{J_{\ic}}$ from $\ic=1$ to $\ic=m$, we obtain a reduced expression of $w_J$. This gives us a representative $\dot{w}_J\in \G{n}$ by the construction given in \eqref{eq: def of representative}.
We illustrate the form of $\dot{w}_J\in \G{n}$ by an example.

\begin{example}\label{eg: wJ}
\rm
Let $n=10$ and $J=\{1,2, \ \ ,4,5,6, \ \ , \ \ ,9\}$. Then we have 
\begin{align*}
 J = \{1,2\} \sqcup\{4,5,6\} \sqcup\{9\} = J_1\sqcup J_2\sqcup J_3.
\end{align*} 
Thus, we have 
\begin{align*}
{\tiny
 \dot{w}_J 
 = \dot{w}_{J_1} \dot{w}_{J_2} \dot{w}_{J_3} 
 =
 \left(
 \begin{array}{@{\,}ccc|cccc|c|cc@{\,}}
     & & 1\!\! & & & & & & & \\
     & \!\!\!\!-1\!\!\!\! & & & & & & & & \\ 
    1 & & & & & & & & & \\ \hline
     & & & & & & 1\!\!& & &  \\
     & & & & & \!\!\!\!-1\!\!\!\! & & & & \\
     & & & & 1 & & & & & \\
     & & & \!\!\!-1\!\!\!\! & & & & & & \\ \hline
     & & & & & & & \!\!1\!\! & & \\ \hline
     & & & & & & & & & 1 \\
     & & & & & & & & \!\!\!-1\!\!\!\! & 
 \end{array}
 \right)\in \G{10}.}
\end{align*}
\end{example}

\vspace{10pt}

For $J\subseteq \dyn$, let 
\begin{equation*}
X_{w_J}^{\circ} = B^-\dot{w}_JB^-/B^-
\quad \text{and} \quad 
\Omega_{w_J}^{\circ} = B^+\dot{w}_JB^-/B^-
\end{equation*}
be the Schubert cell and the opposite Schubert cell associated with the permutation $w_J$, respectively.
According to Rietsch (\cite{ri03,ri06}), 
we consider a \textbf{Richardson stratum}
\begin{equation}\label{eq: def of richardson strata}
\RS_{K,J}\coloneqq Y\cap \Omega_{w_K}^{\circ} \cap X_{w_J}^{\circ} \subseteq Y
\end{equation}
for $J, K\subseteq \dyn$.
We note that $\RS_{K,J}=\emptyset$ unless $w_K\le w_J$ (i.e., $K\subseteq J$) by \cite[Lemma~3.1(3)]{Richardson92}.
It is well-known that 
\begin{equation*}
Y=\bigsqcup_{K\subseteq \dyn}Y\cap \Omega_{w_K}^{\circ}
=\bigsqcup_{J\subseteq \dyn}Y\cap X_{w_J}^{\circ} 
\end{equation*}
(cf.\ \cite[Lemma~3.5]{AHKZ}).
Therefore, we obtain  
\begin{equation}\label{decomposition of Y as union of ZKJ}
Y=\bigsqcup_{K\subseteq J\subseteq \dyn}\RS_{K,J}.
\end{equation}
As a special case, we call $\RS_{K,\dyn}$ ($K\subseteq \dyn$) a \textbf{Richardson stratum  in the big cell}.

\vspace{10pt}

\subsection{$J$-Toeplitz matrices and $Y\cap X_{w_J}^{\circ}$}\label{subsec: on elements in Y ge 0 cap XJ}
For a positive integer $k$, a $k\times k$ matrix of the form
\begin{equation*}
\begin{pmatrix} 
  1&  &&&\\
  x_1 & \!\!\!1 &  && \\
  x_2 & \!\!\!x_1 & \!\!\!1 &  &\\
  \vdots & \!\!\!x_2 & \!\!\!x_1 & \ddots\\
  x_{k-2} & \!\!\!\vdots & \!\!\!\ddots & \ddots & \ddots\\
  x_{k-1} & \!\!\!x_{k-2}& \!\!\!\cdots & x_2 & x_1& 1
\end{pmatrix}
\qquad (x_1,\ldots,x_{k-1}\in\C)
\end{equation*}
is called a \textbf{Toeplitz matrix}.
We call the product $x\dot{w}_0$ for a $k\times k$ Toeplitz matrix $x$ and $\dot{w}_0\in \G{k}$ (see \eqref{eq: representative of w0}) an \textbf{anti-Toeplitz matrix}.
For example, 
\begin{equation*}
\begin{pmatrix}
  & & 1 \\
  & -1 & x_1 \\
  1 & -x_1 & x_2
\end{pmatrix}
\quad \text{and} \quad
\begin{pmatrix}
  & & & 1 \\
  & & -1 & x_1 \\
  & 1 & -x_1 & x_2 \\
  -1 & x_1 & -x_2 & x_3
\end{pmatrix}
\end{equation*}
are anti-Toeplitz matrices.

For each $J\subseteq[n-1]$, we construct a partition of $[n]$ as follows.
Take the decomposition $J = J_1 \sqcup J_2 \sqcup \cdots \sqcup J_{m}$ into the connected components (see \eqref{eq: connected components}), and set
\begin{equation}\label{def: definition of J'i}
\Jb{\ic}\coloneqq J_{\ic}\cup\left\{\max(J_{\ic})+1\right\}
\qquad (1\le \ic\le m).
\end{equation}
Let $\Jb{}\coloneqq \Jb{1}\sqcup \Jb{2}\sqcup \cdots \sqcup \Jb{m}\subseteq[n]$.
We write $[n]-\Jb{} =\{p_1,\ldots,p_s\}$.
Then we obtain an (unorderd) partition of $[n]$:
\begin{equation}\label{eq: partition from J}
 [n] = \Jb{1}\sqcup \cdots \sqcup \Jb{m} \sqcup \{p_1\} \sqcup \cdots \sqcup\{p_s\}.
\end{equation}
This allows us to consider block diagonal matrices with respect to the partition \eqref{eq: partition from J} of $[n]$.
We call such matrices \textbf{$J$-block matrices}.
In the same manner, \textbf{$J$-block diagonal matrices} and \textbf{$J$-block lower triangular matrices} are defined.

\begin{example}\label{ex: J block and lower}
\rm
Let $n=10$ and $J=\{1,2\} \sqcup\{4,5,6\} \sqcup\{9\}$ as above. 
Then we have $\Jb{}=\Jb{1}\sqcup \Jb{2}\sqcup \Jb{3}$ with 
\begin{equation*}
\Jb{1}=\{1,2,3\}, \ \Jb{2}=\{4,5,6,7\}, \ \Jb{3}=\{9,10\}.
\end{equation*}
Hence, we have $[10]-\Jb{}=\{8\}$.
Thus, $J$-block diagonal matrices and $J$-block lower triangular matrices are of the form
\begin{align*}
{\tiny
 \left(
 \begin{array}{@{\,}ccc|cccc|c|cc@{\,}}
     *\!\! & \!\!*\!\! & \!\!* & & & & & & & \\
     *\!\! & \!\!*\!\! & \!\!* & & & & & & & \\ 
     *\!\! & \!\!*\!\! & \!\!* & & & & & & & \\ \hline
     & & & *\!\! & \!\!*\!\! & \!\!*\!\! & \!\!* & & &  \\
     & & & *\!\! & \!\!*\!\! & \!\!*\!\! & \!\!* & & & \\
     & & & *\!\! & \!\!*\!\! & \!\!*\!\! & \!\!* & & & \\
     & & & *\!\! & \!\!*\!\! & \!\!*\!\! & \!\!* & & & \\ \hline
     & & & & & & & \!\!*\!\! & & \\ \hline
     & & & & & & & & *\!\! & \!\!* \\
     & & & & & & & & *\!\! & \!\!*
 \end{array}
 \right)}
 \quad \text{and} \quad 
 {\tiny
 \left(
 \begin{array}{@{\,}ccc|cccc|c|cc@{\,}}
     *\! & \!\!*\!\! & \!* & & & & & & & \\
     *\! & \!\!*\!\! & \!* & & & & & & & \\ 
     *\! & \!\!*\!\! & \!* & & & & & & & \\ \hline
     *\! & \!\!*\!\! & \!* & *\! & \!\!*\!\! & \!*\! & \!\!* & & &  \\
     *\! & \!\!*\!\! & \!* & *\! & \!\!*\!\! & \!*\! & \!\!* & & & \\
     *\! & \!\!*\!\! & \!* & *\! & \!\!*\!\! & \!*\! & \!\!* & & & \\
     *\! & \!\!*\!\! & \!* & *\! & \!\!*\!\! & \!*\! & \!\!* & & & \\ \hline
     *\! & \!\!*\!\! & \!* & *\! & \!\!*\!\! & \!*\! & \!\!* & \!\!*\!\! & & \\ \hline
     *\! & \!\!*\!\! & \!* & *\! & \!\!*\!\! & \!*\! & \!\!* & \!\!*\!\! & *\!\! & \!\!* \\
     *\! & \!\!*\!\! & \!* & *\! & \!\!*\!\! & \!*\! & \!\!* & \!\!*\!\! & *\!\! & \!\!*
 \end{array}
 \right),
 }
\end{align*}
where all $*$'s can take arbitrary complex numbers.
\end{example}

\vspace{10pt}

\begin{definition}
{\rm 
For $J\subseteq[n-1]$, a \textbf{$J$-Toeplitz matrix} is a $J$-block diagonal matrix whose diagonal blocks consist of Toeplitz matrices.
}
\end{definition}

\vspace{10pt}

\begin{example}\label{ex: J-Toeplitz matrices}
\rm
Let $n=10$ and $J=\{1,2\}\sqcup\{4,5,6\}\sqcup\{9\}$ as above. 
Then a $J$-Toeplitz matrix is of the form
\begin{align*}
 {\tiny
 \left(
 \begin{array}{@{\,}ccc|cccc|c|cc@{\,}}
     1& &  & & & & & & & \\ 
     a & \!\!1\!\!& & & & & & & & \\ 
     b & \!\!a\!\! & \!1& & & & & & & \\ \hline
     & & & 1 & & & & & &  \\
     & & & x & \!\!1\!\! &  & & & & \\
     & & & y & \!\!x\!\! & 1& & & & \\
     & & & z & \!\!y\!\! & x & \!\!\!1 & & & \\ \hline
     & & & & & & & \!\!1\!\! & & \\ \hline
     & & & & & & & & 1\!\! &  \\
     & & & & & & & & s\!\! & \!1
 \end{array}
 \right)
 }
 \qquad (a,b,x,y,z,s\in \C).
\end{align*}
\end{example}

\vspace{15pt}

Since $w_J$ is the longest element of $\mathfrak{S}_{J}(\subseteq \mathfrak{S}_{n})$, the associated Schubert cell $X_{w_J}^{\circ}=B^-\dot{w}_JB^-/B^-$ is the set of $gB^- \in \G{n}/B^-$ such that $g$ is $J$-block diagonal:
\begin{equation*}
X_{w_J}^{\circ} = \{ gB^- \in \G{n}/B^- \mid \text{$g$ is $J$-block diagonal}\}.
\end{equation*}
By a direct computation, one can verify the following well-known fact; 
an element $gB^-\in \G{n}/B^-$ belongs to $Y\cap X_{w_J}^{\circ}$ if and only if it can be represented by a matrix $g\in \G{n}$ such that $g$ is a $J$-block diagonal matrix whose diagonal blocks consist of anti-Toeplitz matrices.
We illustrate this in the next example.

\vspace{10pt}

\begin{example}\label{eq: elements of XJ}
\rm
Let $n=10$ and $J=\{1,2\}\sqcup\{4,5,6\}\sqcup\{9\}$ as above. Then $Y\cap X_{w_J}^{\circ}$ is the set of matrices given by
\begin{align*}
{\tiny
 \left(
 \begin{array}{@{\,}ccc|cccc|c|cc@{\,}}
     & & \!\!1 & & & & & & & \\
     & \!\!\!\!-1 & \!\!a & & & & & & & \\ 
    1 & \!\!\!\!-a & \!\!b & & & & & & & \\ \hline
     & & & & & & \!\!\!1& & &  \\
     & & & & & \!\!\!\!-1 & \!\!\!x & & & \\
     & & & & \!\!\!\!1 & \!\!\!\!-x & \!\!\!y & & & \\
     & & & \!\!-1 & \!\!\!\!x & \!\!\!\!-y & \!\!\!z & & & \\ \hline
     & & & & & & & \!\!1\!\! & & \\ \hline
     & & & & & & & & & \!\!\!\!\!1 \\
     & & & & & & & & \!-1 & \!\!\!\!\!s 
 \end{array}
 \right)B^-
 }
 \qquad (a,b,x,y,z,s\in \C).
\end{align*}
\end{example}

\vspace{15pt}

In other words, we can write
\begin{equation}\label{mathcal{X}J=J-toeplitz}
Y\cap X_{w_J}^{\circ}
=\{x\dot{w}_JB^- \in \G{n}/B^- \mid \text{$x$ is $J$-Toeplitz}\}. 
\end{equation}
For example, the matrix exhibited in the previous example is
the product of the $J$-Toeplitz matrix in Example~\ref{ex: J-Toeplitz matrices} and the permutation matrix $\dot{w}_J$ in Example~\ref{eg: wJ}.

\vspace{15pt}

\subsection{Description of the strata $\RS_{K,J}$}\label{subsec: description of RKJ}
For $i\in\dyn$, let
\begin{equation}\label{eq:4.3 first}
\Delta_{i} \colon \G{n}\to\C
\end{equation}
be the function which takes the lower right $(n-i)\times (n-i)$ minor of a given matrix in $\G{n}$.
For example, for $g=(a_{i,j})_{1\le i\le 4,1\le j\le 4}\in \G{4}$,
we have
\begin{equation}\label{eq:4.3 second}
\Delta_1(g) = \det
\begin{pmatrix}
 a_{22} & a_{23} & a_{24} \\
 a_{32} & a_{33} & a_{34} \\
 a_{42} & a_{43} & a_{44}
\end{pmatrix}, \ 
\Delta_2(g) = \det
\begin{pmatrix}
 a_{33} & a_{34} \\
 a_{43} & a_{44}
\end{pmatrix}, \ 
\Delta_3(g) = a_{44}.
\end{equation}

To study the strata $\RS_{K,J}=Y\cap \Omega_{w_K}^{\circ} \cap X_{w_J}^{\circ} $, we first recall Rietsch's description of Richardson strata in the big cell from \cite{ri03}.
The claim \cite[Lemma~4.5]{ri03} combined with \cite[Theorem 4.6(1)]{ri03} due to Dale Peterson gives the following description of Richardson strata in the big cell.
\begin{proposition}\label{lemm: Rietsch's result for Z KI}
Let $K\subseteq \dyn$. Then, we have
\begin{equation*}
\RS_{K,\dyn}
=\left\{x\dot{w}_0 B^- \ \left| \ \text{$x$ is Toeplitz and}~\begin{cases}\Delta_{i}(x\dot{w}_0)=0\quad\text{if}\quad i\in K\\
\Delta_{i}(x\dot{w}_0)\ne 0\quad\text{if}\quad i\notin K
\end{cases}
\right.
\right\}.
\end{equation*}
\end{proposition}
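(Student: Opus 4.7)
The plan is to combine the explicit parametrization of $Y \cap X_{w_0}^\circ$ by Toeplitz matrices with a characterization of opposite Schubert cells via generalized minors, thereby matching the Bruhat stratum of $x\dot{w}_0 B^-$ with the vanishing pattern of the minors $\Delta_i(x \dot w_0)$.

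First, I would use that $w_{[n-1]} = w_0$, so that $X_{w_0}^\circ$ is the big cell of $\G{n}/B^-$. By \eqref{mathcal{X}J=J-toeplitz}, every element of $Y \cap X_{w_0}^\circ$ has a unique representation of the form $x \dot w_0 B^-$ for some $n\times n$ Toeplitz matrix $x$. On the other hand, \eqref{decomposition of Y as union of ZKJ} gives the disjoint decomposition $Y \cap X_{w_0}^\circ = \bigsqcup_{K \subseteq [n-1]} \RS_{K,[n-1]}$. Hence, for each Toeplitz $x$, there exists a \emph{unique} subset $K \subseteq [n-1]$ with $x\dot w_0 B^- \in \Omega_{w_K}^\circ$, and the task reduces to showing that this $K$ is exactly the set $\{ i \in [n-1] \mid \Delta_i(x \dot w_0) = 0 \}$.

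Next, I would identify each $\Delta_i$, which picks out a particular principal minor of $g\in\G{n}$, with the Fomin--Zelevinsky generalized minor $\Delta_{w_0\varpi_i,\,\varpi_i}$ on $\G{n}$. The Bruhat-cell/opposite-Bruhat-cell decomposition of $\G{n}/B^-$ is governed by such generalized minors: their vanishing and non-vanishing on $B^+ \dot w B^-$ is controlled by Bruhat order inequalities between $w$ and Weyl-group translates of $\varpi_i$. Specializing this framework to the Peterson stratum, one expects, for each $i$,
\begin{equation*}
x \dot w_0 B^- \in \Omega_{w_K}^\circ \ \Longleftrightarrow\
\bigl[\,\Delta_i(x\dot w_0) = 0 \iff s_i \le w_K \text{ in the Bruhat order}\,\bigr],
\end{equation*}
and since $s_i \le w_K$ is equivalent to $i \in K$, this would match the claimed vanishing pattern. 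Carrying out this identification is essentially the content of \cite[Lemma~4.5]{ri03}, and the fact that the strata $\RS_{K,[n-1]}$ are all non-empty and exhaust $Y \cap X_{w_0}^\circ$ in this indexing is Peterson's \cite[Theorem~4.6(1)]{ri03} (i.e.\ the identification with the Spec of quantum cohomology of partial flag varieties).

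The main obstacle, were one to avoid citing Rietsch and Peterson, is Step~2: making precise the correspondence between Bruhat position and the vanishing locus of $\Delta_i$ restricted to $\{x\dot w_0 \mid x \text{ Toeplitz}\}$. A direct attack would require computing $\Delta_i(x\dot w_0)$ as an explicit polynomial in the Toeplitz entries $x_1,\ldots,x_{n-1}$ (these polynomials are well known to be essentially the Schur polynomials/quantum parameters underlying $qH^*(\G{n}/P)$), and then verifying independently that its zero locus inside $Y \cap X_{w_0}^\circ$ equals $\bigsqcup_{K \ni i} \RS_{K,[n-1]}$. The cleanest route — the one we follow — is to invoke Rietsch's lemma and Peterson's theorem directly, since they package precisely this correspondence.
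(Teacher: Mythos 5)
Your proposal is correct and follows essentially the same route as the paper: the paper offers no independent argument for this proposition, but presents it precisely as the combination of \cite[Lemma~4.5]{ri03} with Peterson's \cite[Theorem~4.6(1)]{ri03}, exactly the citation you ultimately invoke. Your preliminary remarks on the Toeplitz parametrization of $Y\cap X_{w_0}^{\circ}$ and on generalized minors are consistent with this but are not needed once those results are cited.
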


\vspace{10pt}

\begin{example}\label{eg: strata in big cell}
\rm
Let $n=3$. Then we have 
\begin{equation*}
\RS_{\{1\},\{1,2\}}
=\left\{
  \left.
  \begin{pmatrix}
      0&0&1&\\
      0&-1&a\\
       1&-a&b
   \end{pmatrix}B^- \ \right| \ 
   \Delta_1=a^2-b= 0, \ 
   \Delta_2=b\ne0
   \right\}
\end{equation*}  
and 
\begin{equation*}
\RS_{\emptyset,\{1,2\}}
=\left\{
  \left.
  \begin{pmatrix}
      0&0&1&\\
      0&-1&a\\
       1&-a&b
  \end{pmatrix}B^- \ \right| \ 
  \Delta_1=a^2-b\ne 0, \ \Delta_2=b\ne0 
  \right\}
\end{equation*}
(cf.\ Example~\ref{ex: n=3 orbits} in Section~\ref{subsec: Orbit decomposition}).
\end{example}

\vspace{10pt}

We apply Rietsch's result (Proposition~\ref{lemm: Rietsch's result for Z KI}) to characterize $\RS_{K,J}$ for all $K\subseteq J\subseteq \dyn$ in the next claim.
For that purpose, let us prepare some notations.
Let $J\subseteq\dyn$, and take the decomposition $$J = J_1 \sqcup J_2 \sqcup \cdots \sqcup J_{m}$$ into the connected components. 
For a matrix $g\in \G{n}$, we denote the $\Jb{\ic}\times \Jb{\ic}\ (\subseteq[n]\times [n])$ diagonal block of $g$ by $g(\Jb{\ic})$ for $1\le \ic\le m$ (see \eqref{def: definition of J'i} for the definition of $\Jb{\ic}$).
For each connected component $J_{\ic}\subseteq J$, we set 
\begin{equation*}
n_{\ic}\coloneqq |\Jb{\ic}| .
\end{equation*}  
We consider the special linear group $\G{n_{\ic}}$ of degree $n_{\ic}$ and its Borel subgroup $B^-_{n_{\ic}}$ consisting of lower triangular matrices.
We denote by $Y_{n_{\ic}}$ the Peterson variety in $\G{n_{\ic}}/B^-_{n_{\ic}}$.

\vspace{10pt}

\begin{corollary}\label{coro: a description for ZKJ}
Let $K\subseteq J\subseteq \dyn.$ Then, we have 
\begin{equation*}
\RS_{K,J}=\left\{x\dot{w}_JB^- \ \left| \ \text{\rm $x$ is $J$-Toeplitz and}~\begin{cases}\Delta_{i}(x\dot{w}_J)=0\quad\text{if}\quad i\in K\\
\Delta_{i}(x\dot{w}_J)\ne 0\quad\text{if}\quad i\notin K
\end{cases}
\right.
\right\}.
\end{equation*}
\end{corollary}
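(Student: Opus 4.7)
The plan is to combine the parametrisation \eqref{mathcal{X}J=J-toeplitz} of $Y\cap X_{w_J}^{\circ}$ with a Levi reduction that reduces the claim to Proposition~\ref{lemm: Rietsch's result for Z KI} applied to each connected component of $J$. By \eqref{mathcal{X}J=J-toeplitz}, every element of $\RS_{K,J}\subseteq Y\cap X_{w_J}^{\circ}$ has the form $gB^-$ with $g=x\dot{w}_J$ for some $J$-Toeplitz matrix $x$. Writing $J=J_1\sqcup\cdots\sqcup J_m$ for the connected components, $g$ is $J$-block diagonal: the $\ic$-th diagonal block is the $n_\ic\times n_\ic$ anti-Toeplitz matrix $g(\Jb{\ic})=x(\Jb{\ic})\dot{w}_{J_\ic}$, and the remaining entries in $[n]-\Jb{}$ contribute trivial $1\times 1$ blocks $(1)$. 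The problem becomes to characterise when such a $g$ lies in $B^+\dot{w}_K B^-$, and to translate this condition into the stated minor conditions.

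The first main step is the Levi reduction. Let $M_J\subseteq\G{n}$ denote the Levi subgroup of $J$-block diagonal matrices; since $K\subseteq J$ we have $w_K\in\mathfrak{S}_J=W_{M_J}$. By the standard compatibility of Bruhat decomposition with Levi subgroups, $M_J\cap B^+\dot{w}_K B^-$ equals $(M_J\cap B^+)\dot{w}_K(M_J\cap B^-)$, and both $M_J\cap B^{\pm}$ and $\dot{w}_K$ factor as products along the partition~\eqref{eq: partition from J} of $[n]$. It follows that the $J$-block diagonal $g$ lies in $B^+\dot{w}_K B^-$ if and only if for every $\ic$ the block $g(\Jb{\ic})$ lies in $B^+_{n_\ic}\dot{w}_{K^{(\ic)}}B^-_{n_\ic}$ within $\G{n_\ic}$, where $K^{(\ic)}\coloneqq K\cap J_\ic$ (the singleton torus entries impose no constraint).

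Now apply Proposition~\ref{lemm: Rietsch's result for Z KI} blockwise. Under the relabeling $\Jb{\ic}\to[n_\ic]$ the subset $J_\ic$ becomes $[n_\ic-1]$, so the anti-Toeplitz block $g(\Jb{\ic})$ represents a point of the Peterson variety $Y_{n_\ic}\subseteq\G{n_\ic}/B^-_{n_\ic}$ lying in its big cell, and Proposition~\ref{lemm: Rietsch's result for Z KI} characterises its membership in $\Omega_{w_{K^{(\ic)}}}^{\circ,(n_\ic)}$ precisely by vanishing/non-vanishing conditions on the minors $\Delta_{i'}^{(n_\ic)}$ of $g(\Jb{\ic})$ according to whether the relabeled index $i'$ lies in $K^{(\ic)}$ or not.

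The last step is to translate per-block minors into the ambient $\Delta_i(x\dot{w}_J)$. For $i\in J_\ic$, the lower-right $(n-i)\times(n-i)$ submatrix of $g$ is itself block diagonal: a truncated bottom-right square of $g(\Jb{\ic})$ followed by all complete later blocks $g(\Jb{\ic'})$ ($\ic'>\ic$) and trailing singleton $1$'s. Every full anti-Toeplitz block has determinant $\pm 1$, so $\Delta_i(x\dot{w}_J)=\pm\Delta_{i'}^{(n_\ic)}(g(\Jb{\ic}))$ for the appropriate relabeled $i'$, and the vanishing conditions align. For $i\in[n-1]-J$, the cut at $i$ sits strictly between blocks, so $\Delta_i(x\dot{w}_J)$ is a product of whole block determinants, hence nonzero; the requirement $\Delta_i\ne 0$ for $i\notin K$ is therefore automatic on this range. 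The main obstacle is the Levi reduction step: though standard, verifying cleanly that the blockwise factorisation really does separate the singleton entries from the anti-Toeplitz blocks takes some care. Once that is in place, the remaining minor bookkeeping is routine.
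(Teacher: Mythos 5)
Your proposal is correct and follows essentially the same route as the paper: parametrize $\RS_{K,J}$ inside $Y\cap X_{w_J}^{\circ}$ via \eqref{mathcal{X}J=J-toeplitz}, reduce the condition $x\dot{w}_J B^-\in\Omega_{w_K}^{\circ}$ to the diagonal blocks indexed by the connected components of $J$, apply Proposition~\ref{lemm: Rietsch's result for Z KI} to each block, and convert block minors into the ambient $\Delta_i(x\dot{w}_J)$ (with $\Delta_i=1$ automatic for $i\notin J$). The only cosmetic difference is that you justify the block reduction by the Levi/Birkhoff compatibility $M_J\cap B^+\dot{w}_K B^-=(M_J\cap B^+)\dot{w}_K(M_J\cap B^-)$ (valid, by disjointness of the opposite Bruhat cells), whereas the paper verifies exactly this by the explicit factorization $x\dot{w}_Jb^-=b^+\dot{w}_K$ and blockwise reassembly.
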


\begin{proof}
To begin with, we note that the right hand side of the desired equality is well-defined.
Denoting it by $\RS'_{K,J}$, we prove that $\RS_{K,J}=\RS'_{K,J}$.
First we show that 
\begin{equation}\label{RKJ sub RS'KJ}
\RS_{K,J}\subseteq \RS'_{K,J}.
\end{equation}
By \eqref{mathcal{X}J=J-toeplitz}, an arbitrary element $gB^-\in \RS_{K,J}=Y\cap \Omega_{w_K}^{\circ} \cap X_{w_J}^{\circ}$ can be written as  
\begin{equation*}
 gB^-=x\dot{w}_JB^-
\end{equation*}
for some $J$-Toeplitz matrix $x\in \G{n}$.
Since each block $x(\Jb{\ic})$ is a Toeplitz matrix in $\G{n_{\ic}}$, it follows from \eqref{mathcal{X}J=J-toeplitz} (for $\G{n_{\ic}}/B_{n_{\ic}}^-$) that \vspace{3pt}
\begin{equation}\label{eq: J-Toeplitz 10}
x(\Jb{\ic})\dot{w}_J(\Jb{\ic})B_{n_{\ic}}^- \in Y_{n_{\ic}}\cap (B^-_{n_{\ic}}\dot{w}_J(\Jb{\ic})B^-_{n_{\ic}}/B^-_{n_{\ic}}) 
\vspace{3pt}
\end{equation}
for $1\le \ic\le m$, 
where we note that $\dot{w}_J(\Jb{\ic})$ is the longest permutation matrix in $\G{n_{\ic}}$.
Since $x\dot{w}_JB^-(=gB^-) \in \Omega_{w_K}^{\circ}=B^+\dot{w}_KB^-/B^-$, there exist $b^-\in B^-$ 
and $b^+\in B^+$ 
such that \vspace{3pt}
\begin{equation*}
x\dot{w}_J b^- = b^+\dot{w}_K . \vspace{3pt}
\end{equation*}
Since $K\subseteq J$, $K$-block diagonal matrices are also $J$-block diagonal. In particular, $\dot{w}_K$ in this equality is $J$-block diagonal. 
By focusing on each $\Jb{\ic}\times \Jb{\ic}\ (\subseteq[n]\times [n])$ diagonal block of this equality, it follows that \vspace{3pt}
\begin{equation*}
x(\Jb{\ic})\dot{w}_J(\Jb{\ic}) B_{n_{\ic}}^- \in B_{n_{\ic}}^+\dot{w}_K(\Jb{\ic}) B_{n_{\ic}}^-/B_{n_{\ic}}^- 
\vspace{3pt}
\end{equation*}
for $1\le \ic\le m$.
Combining this and \eqref{eq: J-Toeplitz 10}, it follows that  $x(\Jb{\ic})\dot{w}_J(\Jb{\ic})B_{n_{\ic}}^-$ in $\G{n_{\ic}}/B^-_{n_{\ic}}$ belongs to a Richardson stratum in the big cell: \vspace{3pt}
\begin{equation}\label{eq: J-Toeplitz 30}
x(\Jb{\ic})\dot{w}_J(\Jb{\ic}) B_{n_{\ic}}^- \ \ \in \ \  
Y_{n_{\ic}}\cap (B_{n_{\ic}}^+\dot{w}_K(\Jb{\ic}) B_{n_{\ic}}^-/B_{n_{\ic}}^-)\cap (B^-_{n_{\ic}}\dot{w}_J(\Jb{\ic})B^-_{n_{\ic}}/B^-_{n_{\ic}})\vspace{3pt}
\end{equation}
for $1\le \ic\le m$.

We prove \eqref{RKJ sub RS'KJ} by using \eqref{eq: J-Toeplitz 30} in what follows.
For each $j\in J_{\ic}$, we set \vspace{3pt}
\begin{equation}\label{eq: def of j'}
j' \coloneqq j+1-\min J_{\ic} \in [n_{\ic} -1]\vspace{3pt}
\end{equation}
which encodes the position of $j$ in the set $J_{\ic}$; for example, if $j=\min J_{\ic}$, then $j'=1$.
Recall that $\dot{w}_J(\Jb{\ic})$ in \eqref{eq: J-Toeplitz 30} is the longest permutation matrix in $\G{n_{\ic}}$.
Applying Rietsch's result (Proposition~\ref{lemm: Rietsch's result for Z KI}) to the element in \eqref{eq: J-Toeplitz 30}, 
it follows that there exists a Toeplitz matrix $x_{\ic}\in\G{n_{\ic}}$ such that  \vspace{3pt}
\begin{itemize} \vspace{3pt}
\item[(1)] $x(\Jb{\ic})\dot{w}_J(\Jb{\ic})B_{n_{\ic}}^- = x_{\ic}\dot{w}_J(\Jb{\ic})B_{n_{\ic}}^-$ ; \vspace{5pt}
\item[(2)] For $j\in J_{\ic}$, we have $\Delta_{j'}^{(n_{\ic})}(x_{\ic}\dot{w}_J(\Jb{\ic}))=0 \  \text{if and only if} \ j\in K\cap J_{\ic}$,  \vspace{3pt}
\end{itemize}
where $j'$ is the one defined in \eqref{eq: def of j'} and $\Delta_{j'}^{(n_{\ic})}:\G{n_{\ic}}\rightarrow \C$ is the minor defined in \eqref{eq:4.3 first} for $\G{n_{\ic}}$.
In (1), both of $x(\Jb{\ic})$ and $x_{\ic}$ are lower triangular matrices having $1$'s on the diagonal, and $\dot{w}_J(\Jb{\ic})$ is the longest permutation matrix in $\G{n_{\ic}}$. Hence, (1) simply implies that $x(\Jb{\ic})=x_{\ic}$.
Thus, the condition (2) can be written as follows; for $j\in J_{\ic}$, we have \vspace{3pt}
\begin{equation*}
\Delta_{j'}^{(n_{\ic})}\big(x(\Jb{\ic})\dot{w}_J(\Jb{\ic})\big)=0\quad \text{if and only if}\quad j\in K\cap J_{\ic} . \vspace{3pt}
\end{equation*}
Since $x$ is a $J$-Toeplitz matrix, the determinants of the diagonal blocks of $x\dot{w}_J\in\G{n}$ are all equal to $1$, and hence the left hand side of this equality can be written as $\Delta_{j}(x\dot{w}_J)$, where $\Delta_{j}\colon \G{n}\rightarrow \C$ is the function defined in \eqref{eq:4.3 first} for $\G{n}$.
Namely, we have
\begin{equation*}
\Delta_{j}(x\dot{w}_J)=0\quad \text{if and only if}\quad j\in K\cap J_{\ic} .
\end{equation*}
This holds for each connected component $J_{\ic}$ $(1\le \ic\le m)$, and we have $\Delta_{j}(x\dot{w}_J)=1$ for all $j\notin J$ since $x$ is $J$-Toeplitz (see Example~\ref{ex: J-Toeplitz matrices}).
Therefore, we conclude that for $j\in \dyn$ we have
\begin{equation*}
\Delta_{j}(x\dot{w}_J)=0\quad \text{if and only if}\quad j\in K .
\end{equation*}
Recalling $gB^-=x\dot{w}_JB^-$ from the construction, we conclude that $\RS_{K,J}\subseteq \RS'_{K,J}$ as claimed in \eqref{RKJ sub RS'KJ}.

Conversely, let us show that
\begin{equation*}
\RS_{K,J}\supseteq \RS'_{K,J}.
\end{equation*}
Take an arbitrary element $x\dot{w}_J B^- \in \RS'_{K,J}$, where $x$ is a $J$-Toeplitz matrix. 
By \eqref{mathcal{X}J=J-toeplitz}, we have $x\dot{w}_J B^- \in Y\cap X_{w_J}^{\circ}$.
In what follows, let us prove that
\begin{equation}\label{eq: xwJ in OmegawJ}
x\dot{w}_J B^- \in \Omega_{w_K}^{\circ} 
\end{equation}
which completes the proof.
By the definition of $\RS'_{K,J}$, we have 
\begin{equation*}
\Delta_{j}(x\dot{w}_J)=0\quad \text{if and only if}\quad j\in K .
\end{equation*}
By reversing the argument above, we obtain the following claim for each $\Jb{\ic}\times \Jb{\ic}$ diagonal block; for $j\in J_{\ic}$, we have \vspace{3pt}
\begin{equation*}
\Delta_{j'}\big(x(\Jb{\ic})\dot{w}_J(\Jb{\ic})\big)=0
\quad \text{if and only if}\quad j\in K\cap J_{\ic} , \vspace{3pt}
\end{equation*}
where $j'\in [n_{\ic} -1]$ is the position of $j$ in $J_{\ic}$ defined in \eqref{eq: def of j'}.
This and Rietsch's result (Proposition~\ref{lemm: Rietsch's result for Z KI}) for $\G{n_{\ic}}/B_{n_{\ic}}^-$ imply that each $x(\Jb{\ic})\dot{w}_J(\Jb{\ic})B_{n_{\ic}}^-$ belongs to a Richardson stratum in the big cell \vspace{3pt}
\begin{equation*}
Y_{n_{\ic}}\cap (B_{n_{\ic}}^+\dot{w}_K(\Jb{\ic}) B_{n_{\ic}}^-/B_{n_{\ic}}^-)\cap (B^-_{n_{\ic}}\dot{w}_J(\Jb{\ic})B^-_{n_{\ic}}/B^-_{n_{\ic}}). \vspace{3pt}
\end{equation*}
In particular, we have 
\begin{equation*}
x(\Jb{\ic})\dot{w}_J(\Jb{\ic})B_{n_{\ic}}^- \in 
B_{n_{\ic}}^+\dot{w}_K(\Jb{\ic}) B_{n_{\ic}}^-/B_{n_{\ic}}^- . \vspace{3pt}
\end{equation*}
This means that there exist $b_{\ic}^-\in B^-_{n_{\ic}}$ and $b_{\ic}^+\in B^+_{n_{\ic}}$ such that \vspace{3pt}
\begin{equation*}
x(\Jb{\ic})\dot{w}_J(\Jb{\ic})b_{\ic}^- =
b_{\ic}^+ \dot{w}_K(\Jb{\ic}) . \vspace{3pt}
\end{equation*}
Let $b^-\in B^-$ be the $J$-block diagonal matrix having $b_{\ic}^-$ on its $\Jb{\ic}\times \Jb{\ic}\ (\subseteq[n]\times [n])$ diagonal block for $1\le \ic\le m$ and $1$'s on the diagonal blocks of size $1$. Similarly, define $b^+\in B^+$ from $b_{\ic}^+$ $(1\le \ic\le m)$ by the same manner.
Then we obtain an equality of $J$-block diagonal matrices
\begin{equation*}
x\dot{w}_Jb^- = b^+ \dot{w}_K
\end{equation*}
which can be shown by comparing the diagonal blocks (including the blocks of size $1$). Hence, we obtain that
\begin{equation*}
x\dot{w}_J B^- \in B^+\dot{w}_K B^-/B^- = \Omega_{w_K}^{\circ}  . \vspace{3pt}
\end{equation*}
as claimed in \eqref{eq: xwJ in OmegawJ}.
Thus, we conclude that $\RS'_{K,J}\subseteq \RS_{K,J}$ holds as well.
\end{proof}

\vspace{10pt}

\begin{example}\label{eg: RJK}
\rm
Let $n=5$. Then, for example, we have 
\begin{equation*}
\RS_{\{1\},\{1,2,\ \ ,4\}}
=\left\{
  \left.
  \left(
  \begin{array}{@{\,}ccc|ccc@{\,}}
      0&0&1&0&0\\
      0&-1&a&0&0\\
      1&-a&b&0&0\\ \hline
      0&0&0&0&1\\
      0&0&0&-1&x
  \end{array}
  \right)
  B^- \ \right| \ 
  \begin{matrix}
  \Delta_1=a^2-b= 0, \ \Delta_2=b\ne0, \\
  (\Delta_3=1\ne0), \ \Delta_4=x\ne0
  \end{matrix}
  \right\}.
\end{equation*}
\end{example}

\vspace{20pt}

\section{Totally nonnegative part of $Y$}\label{sec: nonnegative part of Y}
In this section, we study the totally nonnegative part of the Peterson variety with its cell decomposition introduced by Rietsch (\cite{ri03,ri06}).
We begin with reviewing Lusztig's theory of the totally nonnegative parts of $\G{n}$ and $\G{n}/B^-$ from \cite{lu94}.

\subsection{Totally nonnegative part of $\G{n}$}
Recall from Section~\ref{subsec: decomposition of Y} that we have
\begin{align*}
x_i(t)= \exp(te_i) \in U^+(\subseteq \G{n})
\quad \text{and} \quad
y_i(t)= \exp(tf_i) \in U^-(\subseteq \G{n})
\end{align*}
for $i\in \dyn$ and $t\in \C$.
Let $U^+_{\ge0}$ be the submonoid $($with $1$$)$ of $U^+$ generated by the elements $x_i(a)$ for $i\in \dyn$ and $a\in\mathbb{R}_{\ge 0}$.
Following \cite[Proposition 2.7]{lu94}, we decompose $U^+_{\ge0}$ as follows.
For each $w\in \mathfrak{S}_n$ and its reduced expression $w=s_{i_1}s_{i_2}\cdots s_{i_p}$, the map given by
\begin{equation*}
\mathbb{R}^p_{>0}\to U^+_{\ge0} \quad ; \quad (a_1,a_2,\cdots, a_p)\to x_{i_1}(a_1)x_{i_2}(a_2)\cdots x_{i_p}(a_p)
\end{equation*}
is known to be injective. Its image, denoted by $U^+(w)$, does not depend on the choice of the reduced expression $w=s_{i_1}s_{i_2}\cdots s_{i_p}$. Moreover, this provide us a disjoint decomposition
\begin{align}\label{eq: decomp of U^-}
U^+_{\ge 0}=\bigsqcup_{w\in \mathfrak{S}_n}U^+(w).
\end{align}
We set $U^+_{>0}\coloneqq  U^+(w_0)$,
where $w_0$ is the longest element in $\Sn$. 

\vspace{10pt}

\begin{example}\rm 
Let $n=3$. Then we have $w_0=s_1s_2s_1$ so that 
\begin{equation*}\begin{split}
U^+_{>0}&=U^+(s_1s_2s_1)
=\{x_1(a)x_2(b)x_1(c) \mid a>0, b>0, c>0\}\\
&=\left\{  \left. \begin{pmatrix}
      1 & a &0 \\
      0 & 1 &0\\
      0&0&1
   \end{pmatrix}\begin{pmatrix}
      1 & 0 &0 \\
      0 & 1 &b\\
      0&0&1
   \end{pmatrix}\begin{pmatrix}
      1 & c &0 \\
      0 & 1 &0\\
      0&0&1
   \end{pmatrix}\ \right| \ a>0, \ b>0, \ c>0\right\}\\
   &=\left. \left\{\begin{pmatrix}
      1 & x& y \\
      0 & 1 &z\\
      0&0&1
   \end{pmatrix}\ \right| \ x>0, \ y>0, z>0, \ xz-y>0\right\},
\end{split}
\end{equation*}
where we leave the reader to verify the last equality.
\end{example}

\vspace{10pt}

\begin{example}\label{ex: U plus wJ}\rm
Let $n=9$ and $J=\{1, 3,4, 7,8\}=\{1\}\sqcup\{3,4\}\sqcup\{7,8\}$.
Then the longest element $w_J$ of the Young subgroup $\mathfrak{S}_J$ is given by $w_J=s_1\cdot s_3s_4s_3\cdot s_7s_8s_7$. Hence, it follows that
\begin{equation*}
\begin{split}
U^+(w_J)&=U^+(s_1\cdot s_3s_4s_3\cdot s_7s_8s_7)\\
&=\left\{\left( \left.
 {\tiny
 \begin{array}{@{\,}cc|ccc|c|ccc@{\,}}
     1& \!\!a&  & & & & & &  \\
    &  \!\!1& & & & & & &  \\ \hline
    & & 1&  \!\!x & \!\!y & & & &  \\ 
     & &&\!\!1 &\!\!z & & & &   \\
     & && & \!\!1&  & & &\\ \hline
     & & & & & \!\!1\!\! & & &\\\hline
     & & &  & & & 1 & \!\!s & \!\!t\\ 
     & & & & & & & \!\!1 &\!\!u\\ 
     & & & & & & & & \!\!1\\
 \end{array}
 }
 \right) \ \right| \ 
 \begin{array}{l}
 a>0, \\
 x>0, \ y>0, \ z>0, \ xz-y>0, \\
 s>0, \ t>0, \ u>0, \ su-t>0
 \end{array}
 \right\}
\end{split}
\end{equation*}
by the previous example.
We note that each block of the matrix appearing in this set belongs $U^+_{\ge0}$ for special linear groups of smaller ranks.
\end{example}

\vspace{10pt}

Recall that we have the Bruhat decomposition of $\G{n}$:
\begin{align*}
 \G{n} = \bigsqcup_{w\in \Sn} B^-\dot{w}B^- ,
\end{align*}
where $\dot{w}\in\G{n}$ is the representative of $w$ defined in \eqref{eq: def of representative}.
Since $U^+_{\ge0}\subseteq \G{n}$, the intersections $U^+_{\ge0}\cap B^-\dot{w}B^-$ for $w\in W$ decompose $U^+_{\ge0}$. The following claim means that this coincides with the decomposition in \eqref{eq: decomp of U^-}.

\begin{lemma}\label{lem: plus w = bruhat}
For $w\in W$, we have $U^+(w) = U^+_{\ge0}\cap B^-\dot{w}B^-$. 
\end{lemma}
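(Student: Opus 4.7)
The plan is to prove the inclusion $U^+(w) \subseteq B^-\dot w B^-$ directly, and then deduce the equality from the disjointness of both sides' decompositions.

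First I would check the rank-one case: for $a>0$ the identity
\[
 x_i(a) \;=\; y_i(a^{-1})\,\alpha_i^{\vee}(a)\,\dot{s}_i\,y_i(a^{-1}),
\]
which one verifies by direct $SL_2$ computation (here $\alpha_i^{\vee}(a)$ is the cocharacter embedded in the standard $SL_2$-triple indexed by $i$), shows that $x_i(a) \in U^-T\dot{s}_i U^- \subseteq B^-\dot{s}_i B^-$. For $a=0$, $x_i(0)=e \in B^-$; but elements of $U^+(w)$ are built from strictly positive parameters, so only the case $a>0$ is needed.

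Next, fix $w\in \mathfrak{S}_n$ and a reduced expression $w = s_{i_1}s_{i_2}\cdots s_{i_p}$. Each element of $U^+(w)$ is a product $x_{i_1}(a_1)\cdots x_{i_p}(a_p)$ with all $a_j>0$, hence lies in the product of Bruhat cells
\[
 B^-\dot{s}_{i_1}B^- \cdot B^-\dot{s}_{i_2}B^-\cdots B^-\dot{s}_{i_p}B^-.
\]
By the standard Bruhat multiplication rule, since $\ell(s_{i_1}\cdots s_{i_k} s_{i_{k+1}})=\ell(s_{i_1}\cdots s_{i_k})+1$ at every step of a reduced word, this product collapses to the single cell $B^-\dot{w}B^-$ (with the representative $\dot{w}=\dot{s}_{i_1}\cdots \dot{s}_{i_p}$, which is well-defined by the braid relations recalled in Section~\ref{subsec: A decomposition of XJ}). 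Consequently $U^+(w)\subseteq U^+_{\ge 0}\cap B^-\dot{w}B^-$.

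For the reverse inclusion, take $x\in U^+_{\ge 0}\cap B^-\dot{w}B^-$. By the decomposition~\eqref{eq: decomp of U^-} there exists a unique $w'\in \mathfrak{S}_n$ with $x\in U^+(w')$, and by the forward inclusion just proved, $x\in B^-\dot{w}'B^-$. Disjointness of the Bruhat decomposition $\G{n}=\bigsqcup_{v}B^-\dot{v}B^-$ forces $w=w'$, so $x\in U^+(w)$. This establishes the equality. The main technical point is simply the $SL_2$-identity in the first step; everything else is a formal combination of Bruhat multiplication and disjointness.
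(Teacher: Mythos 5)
Your proof is correct, and its overall skeleton is the same as the paper's: establish the inclusion $U^+(w)\subseteq U^+_{\ge0}\cap B^-\dot{w}B^-$, then conclude equality because both families $\{U^+(w)\}_w$ and $\{U^+_{\ge0}\cap B^-\dot{w}B^-\}_w$ are disjoint decompositions of $U^+_{\ge0}$ indexed by $\Sn$ (your "unique $w'$ plus Bruhat disjointness" step is just this argument phrased elementwise). The difference is in how the forward inclusion is obtained: the paper simply cites Lusztig's Proposition~2.7(d) from \cite{lu94}, whereas you re-derive it in the $\G{n}$ setting from scratch, via the rank-one identity $x_i(a)=y_i(a^{-1})\,\alpha_i^{\vee}(a)\,\dot{s}_i\,y_i(a^{-1})$ (which indeed checks out by direct $2\times2$ computation and places $x_i(a)\in B^-\dot{s}_iB^-$ for $a>0$) together with the Tits-system multiplication rule $B^-\dot{u}B^-\cdot B^-\dot{s}B^-=B^-\dot{us}B^-$ when $\ell(us)=\ell(u)+1$, applied along a reduced word. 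Your route is more self-contained and makes transparent exactly which structural facts are used (an $SL_2$ identity plus Bruhat cell multiplication), at the cost of redoing a statement that is already available in Lusztig; the paper's citation is shorter and works verbatim in the general reductive setting. Either way the argument is complete.
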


\begin{proof}
By \cite[Proposition~2.7~(d)]{lu94}, we have $U^+(w) \subseteq U^+_{\ge0}\cap B^-\dot{w}B^-$. In addition, both of these decompose $U^+_{\ge0}$:
\begin{align*}
 U^+_{\ge0} = \bigsqcup_{w\in \Sn} U^+(w)
 \quad \text{and} \quad
 U^+_{\ge0} = \bigsqcup_{w\in \Sn} (U^+_{\ge0}\cap B^-\dot{w}B^-).
\end{align*}
Thus, the claim follows.
\end{proof}

\vspace{10pt}

Similarly, $U^-_{\ge0}$ and $U^-(w)\subseteq U^-_{\ge0}$ for $w\in \mathfrak{S}_{n}$ are defined in the same manner, where we exchange $x_i(a)$ in the definitions of $U^+_{\ge0}$ and $U^+(w)$ to $y_i(a)$. In particular, we have 
\begin{align}\label{eq: minus decomp}
 U^-_{\ge0} = \bigsqcup_{w\in \Sn} U^-(w).
\end{align}
Similar to Lemma~\ref{lem: plus w = bruhat}, we have
\begin{align}\label{eq: minus w = bruhat}
 U^-(w) = U^-_{\ge0}\cap B^+\dot{w}B^+ 
\end{align}
for $w\in \Sn$. In fact, we can obtain this equality by taking the transpose to the equality of Lemma~\ref{lem: plus w = bruhat} for $w^{-1}$ (\cite[Section~1.2]{lu94}).

Also, $T_{>0}$ is defined as follows.
For $i\in[n]$, let $\alpha^{\vee}_i \colon \C^{\times}\rightarrow T\ (\subset \G{n})$ be the homomorphism which sends $t\in \C^{\times}$ to the element of $T$ which has $t$ and $t^{-1}$ on the $i$-th and $(i+1)$-st component, respectively, and $1$'s on the rest of the component:
\begin{align*}
 \alpha^{\vee}_i(t) =
 \text{diag}(1,\ldots,1,t,t^{-1},1\ldots,1)
 \in T .
\end{align*}
Then, $T_{>0}$ is defined to be the submonoid of $T(\subseteq \G{n})$ generated by the elements $\alpha^{\vee}_i(a)$ for $a\in\mathbb{R}_{>0}$ and $i\in \dyn$.
It is straightforward to verify that
\begin{align*}
 T_{>0} = \{\text{diag}(t_1,\ldots,t_n)\in T \mid t_i>0 \ \text{for $1\le i\le n$}\}.
\end{align*}

According to \cite{lu94}, the totally nonnegative part of $\G{n}$ is the submonoid of $\G{n}$ generated by the elements $x_i(a)$, $y_i(a)$ for $i\in \dyn$, $a\in\R_{\ge0}$ and by the elements $\alpha^{\vee}_i(t)$ for $i\in \dyn$, $t\in\R_{>0}$. It can be written as
\begin{align*}
 (\G{n})_{\ge0} = U^-_{\ge0}T_{>0}U^+_{\ge0} = U^+_{\ge0}T_{>0}U^-_{\ge0}.
\end{align*}
By A. Whitney's theorem (\cite{wh52}), we can also express $(\G{n})_{\ge0}$ as follows (e.g.\ see \cite{lu08}): 
\begin{align}\label{eq: type A nonnegativity}
 (\G{n})_{\ge0} = \{ g\in \G{n} \mid \text{all minors of $g$ are $\ge0$}\}.
\end{align}
The totally positive part of $\G{n}$ is defined to be
\begin{align*}
 (\G{n})_{>0} \coloneqq U^-_{>0}T_{>0}U^+_{>0} = U^+_{>0}T_{>0}U^-_{>0},
\end{align*}
and it follows that $(\G{n})_{\ge0}=\overline{(\G{n})_{>0}}$ (\cite[Theorem~4.3 and Remark~4.4]{lu94}).
An equality similar to \eqref{eq: type A nonnegativity} holds for $(\G{n})_{>0}$ as well by replacing the symbols $\ge$ to $>$ (see \cite{lu08,wh52}).

\vspace{10pt}

\subsection{Totally nonnegative part of the flag variety}
According to \cite{lu94}, the totally positive part of $\G{n}/B^-$ is defined to be
\begin{align*}
(\G{n}/B^-)_{> 0}
\coloneqq  
\{gB^- \in \G{n}/B^- \mid g\in (\G{n})_{>0}\} =\{uB^- \in \G{n}/B^- \mid u\in U^+_{>0}\}, 
\end{align*}
and the totally nonnegative part of $\G{n}/B^-$ is defined to be
\begin{align*}
(\G{n}/B^-)_{\ge 0}\coloneqq  \overline{(\G{n}/B^-)_{> 0}}, 
\end{align*}
where the closure is taken in $\G{n}/B^-$ with respect to the classical topology. 
For example, if $n=2$, then 
\begin{equation*}
 (\G{2}/B^-)_{> 0}= \{uB^- \in \G{2}/B^- \mid u\in U^+_{>0}\}
 =
 \left\{
 \left.
 \begin{pmatrix} 
  1 & t \\
  0 & 1 \\
 \end{pmatrix}B^- \ \right| \ t>0\right\}.
\end{equation*}
One can verify that its closure $(\G{2}/B^-)_{\ge 0}= \overline{(\G{2}/B^-)_{> 0}}$ is obtained by adding two points $B^-$ and $\dot{w}_0B^-$, and hence it is homeomorphic to an interval $[0,1]$.

\vspace{10pt}

Recall that we have $U^+(w) = U^+_{\ge0}\cap B^-\dot{w}B^-$ for $w\in \Sn$ from 
Lemma~\ref{lem: plus w = bruhat}. One can verify that this equality implies that
\begin{equation}\label{x lies in U-nonnegatiave}
U^+(w)B^-/B^- = (\G{n}/B^-)_{\ge0}\cap (B^+B^-/B^-)\cap (B^-\dot{w}B^-/B^-)
\end{equation}
for $w\in \Sn$ (e.g.\ \cite[Sect.\ 1.3]{ri99}) by a straightforward argument.
Also, in the proof of Corollary~10.5 of \cite{ri06}, 
Rietsch proved that
\begin{equation}\label{lemm: exptf acts on xwoB-}
U^-_{>0}\cdot(\G{n}/B^-)_{\ge 0}\subseteq (\G{n}/B^-)_{\ge 0}\cap (B^+B^-/B^-).
\end{equation}
We use these properties of $(\G{n}/B^-)_{\ge 0}$ to study the totally nonnegative part of the Peterson variety in the next subsection.

For the Borel subgroup $B^+\subset \G{n}$ consisting of upper triangular matrices, $(\G{n}/B^+)_{>0}$ and $(\G{n}/B^+)_{\ge0}$ are defined similarly by changing the role of $U^+_{>0}$ to $U^-_{>0}$.

\vspace{10pt}
\subsection{Totally nonnegative part of the Peterson variety}\label{subsec: A decompostion of Y ge 0}
We now study the totally nonnegative part of the Peterson variety introduced by Rietsch in \cite{ri03,ri06}.

\begin{definition}[\cite{ri03,ri06}]\label{definition of nonnegative part of Peterson variety}
 The totally nonnegative part of the Peterson variety $Y$ in $\G{n}/B^-$ is defined by $Y_{\ge 0}\coloneqq  Y\cap (\G{n}/B^-)_{\ge 0}$.
\end{definition}

\vspace{10pt}

Recall that we have the following decomposition of $Y$ by the Richardson strata in \eqref{decomposition of Y as union of ZKJ}:
\begin{equation*}
Y=\bigsqcup_{K\subseteq J\subseteq \dyn}\RS_{K,J} .
\end{equation*}

\begin{definition}[\cite{ri06}]\label{def: nonnegative part of ZKJ} 
For $K\subseteq J\subseteq \dyn$, we set
\begin{equation}\label{eq: def of YKJ>0}
\RS_{K,J;>0}\coloneqq \RS_{K,J}\cap(\G{n}/B^-)_{\ge 0}.
\end{equation}
\end{definition}

\vspace{10pt}

By construction, we obtain a stratification of $Y_{\ge 0}$:
\begin{equation}\label{decomposition of nonnegative part of Peterson}
Y_{\ge 0}=\bigsqcup_{K\subseteq J\subseteq \dyn}\RS_{K,J;>0}.
\end{equation}
The goal of this subsection is to give a concrete description of $\RS_{K,J;>0}$ (see Proposition~\ref{prop: description for R KJ>0} below).
We being with preparing some lemmas. 

\vspace{10pt}

Let $J\subseteq \dyn$, and take the decomposition $J = J_1 \sqcup J_2 \sqcup \cdots \sqcup J_{m}$ into the connected components.
Similar to the notations prepared for the proof of Corollary~\ref{coro: a description for ZKJ}, we consider the special linear groups $\G{n_{\ic}}$ with $n_{\ic}\coloneqq |\Jb{\ic}|$ and the subgroups $U^-_{n_{\ic}}\subseteq B^-_{n_{\ic}} \subseteq \G{n_{\ic}}$, where $U^-_{n_{\ic}}$ is the subgroup consisting of all the lower triangular matrices with $1$'s on the diagonal $(1\le k\le m)$. We also consider $U^+_{n_{\ic}}\subseteq B^+_{n_{\ic}}\subseteq \G{n_{\ic}}$ by exchanging the role of ``lower triangular" to ``upper triangular".
For a $J$-block matrix $g\in \G{n}$ (see Section~\ref{subsec: on elements in Y ge 0 cap XJ} for the definition of $J$-block matrices), we denote the $\Jb{\ic}\times \Jb{\ic}\ (\subseteq[n]\times [n])$ diagonal block of $g$ by $g(\Jb{\ic})$ for $1\le \ic\le m$.
We keep these notations in the rest of this section.

\begin{lemma}\label{lemm: if and only if lemma which is needed}
Let $J\subseteq \dyn$, and suppose that $x\in U^-$ is a $J$-block diagonal matrix. Then, we have
\begin{equation*}
x\in U^-_{\ge 0} \quad \text{if and only if} \quad
x(\Jb{\ic})\in (U_{n_{\ic}}^-)_{\ge 0}\ (1\le \ic\le m).
\end{equation*}
\end{lemma}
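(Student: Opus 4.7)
My plan is to invoke Whitney's characterization \eqref{eq: type A nonnegativity} of total nonnegativity via minors. The first step is to observe that $U^-_{\ge 0} = U^- \cap (\G{n})_{\ge 0}$: the inclusion $\subseteq$ is clear, and the reverse follows from the uniqueness of the Gauss decomposition applied to the expression $(\G{n})_{\ge 0} = U^-_{\ge 0} T_{>0} U^+_{\ge 0}$ for an element that already lies in $U^-$. Combined with \eqref{eq: type A nonnegativity}, this gives the characterization: $u \in U^-$ belongs to $U^-_{\ge 0}$ if and only if every minor of $u$ (viewed inside $\G{n}$) is $\ge 0$, and analogously for $U^-_{n_{\ic}} \subseteq \G{n_{\ic}}$.

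The forward implication is then immediate: any minor of a diagonal block $x(\Jb{\ic})$ is, by construction, a minor of $x$ itself (with rows and columns chosen from $\Jb{\ic}$), and therefore $\ge 0$ by assumption. Hence $x(\Jb{\ic}) \in (U^-_{n_{\ic}})_{\ge 0}$ for every $\ic$.

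For the reverse implication, I would argue that every minor $\det(x_{R,C})$ of $x$ (with $R, C \subseteq [n]$, $|R|=|C|$) factors according to the partition \eqref{eq: partition from J} $[n] = \Jb{1} \sqcup \cdots \sqcup \Jb{m} \sqcup \{p_1\} \sqcup \cdots \sqcup \{p_s\}$. Because $x$ is $J$-block diagonal, its $(i,j)$-entry vanishes whenever $i$ and $j$ lie in different parts. Expanding $\det(x_{R,C})$ by the Leibniz formula, any nonzero term corresponds to a bijection $R \to C$ preserving the parts, which forces $|R \cap P| = |C \cap P|$ for every part $P$. When this condition holds, the natural ordering of $R$ and $C$ as subsets of $[n]$ makes $x_{R,C}$ itself genuinely block-diagonal, so $\det(x_{R,C})$ is the product of the minors $\det(x_{R \cap P,\, C \cap P})$ over the parts $P$. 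Each such factor is either a minor of some $x(\Jb{\ic})$, nonnegative by hypothesis, or a minor of a $1\times 1$ identity block, equal to $0$ or $1$. In all cases $\det(x_{R,C}) \ge 0$, and by Whitney we conclude $x \in U^-_{\ge 0}$.

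The only step requiring care is the block-diagonal factorization of $\det(x_{R,C})$; this is essentially a bookkeeping exercise with the Leibniz formula, so I do not foresee a genuine obstacle to writing out the details.
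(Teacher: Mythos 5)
Your proof is correct, but it follows a genuinely different route from the paper. The paper stays inside Lusztig's framework: it uses the decomposition $U^-_{\ge0}=\bigsqcup_w U^-(w)$ together with the identification $U^-(w)=U^-_{\ge0}\cap B^+\dot{w}B^+$ to show that a $J$-block diagonal element of $U^-_{\ge0}$ lies in $U^-(w)$ for some $w$ in the Young subgroup $\mathfrak{S}_J$, and then reads off the block structure from a factorization into $y_i(a)$'s with $i\in J$ (and conversely concatenates such factorizations of the blocks). You instead reduce everything to Whitney's minor criterion \eqref{eq: type A nonnegativity}: your identity $U^-_{\ge0}=U^-\cap(\G{n})_{\ge0}$ is correct and your uniqueness-of-Gauss-decomposition argument for it is exactly what is needed, and the factorization of an arbitrary minor $\det(x_{R,C})$ into minors of the diagonal blocks is valid because the parts $\Jb{1},\ldots,\Jb{m},\{p_1\},\ldots,\{p_s\}$ are consecutive intervals of $[n]$, so when $|R\cap P|=|C\cap P|$ for all parts the submatrix is genuinely block diagonal with no sign issues, and otherwise the minor vanishes. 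What your approach buys is a more elementary, self-contained linear-algebra argument (given Whitney) that avoids reduced words and Bruhat cells, and it actually proves the slightly stronger statement that a $J$-block diagonal unitriangular matrix is totally nonnegative if and only if each block is; the cost is that it leans on the type A minor characterization, whereas the paper's argument via the cells $U^-(w)$ is the one that generalizes to other Lie types and reuses machinery (the structure of $U^\pm(w_J)$) that the paper needs again in later proofs. Two cosmetic points: the case where no part-preserving bijection $R\to C$ exists should be stated explicitly as giving $\det(x_{R,C})=0$, and a nonempty minor of a $1\times1$ identity block equals $1$, not $0$ or $1$; neither affects the argument.
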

\begin{proof}
Suppose that $x\in U^-_{\ge0}$. The matrix $x$ has $1$'s on the diagonal, and it is a $J$-block diagonal matrix by the assumption. Thus, each diagonal block $x(\Jb{\ic})$ belongs to $\G{n_{\ic}}$. 
Hence, the Bruhat decomposition of $\G{n_{\ic}}$ implies that 
there exists a permutation $w_{\ic}\in\mathfrak{S}_{J_{\ic}}$ such that $x(\Jb{\ic})\in B^+_{n_{\ic}}\dot{w}_{\ic}B^+_{n_{\ic}}$.
This claim holds for each $1\le \ic\le m$. Since $x$ is a $J$-block diagonal matrix having $1$'s on the diagonal, it follows that
\begin{equation*}
 x\in B^+\dot{w}B^+ ,
\end{equation*}
where we set $w\coloneqq w_1\times\cdots\times w_m\in\mathfrak{S}_{J}=\mathfrak{S}_{J_1}\times \mathfrak{S}_{J_2}\times \cdots\times\mathfrak{S}_{J_m}\subseteq \Sn$.
Thus, we have
\begin{equation}\label{eq: x in UwJ}
 x \in U^-_{\ge0}\cap B^+\dot{w}B^+ = U^-(w)
\end{equation}
by \eqref{eq: minus w = bruhat}.
Since $w$ is an element of the Young subgroup $\mathfrak{S}_{J}$, a reduced expression of $w$ can be obtained by listing reduced expressions for $w_1,\ldots,w_m$ in order.
Hence, by the definition of $U^-(w)$, elements of $U^-(w)$ are $J$-block diagonal matrices having elements of $(U_{n_{\ic}}^-)_{\ge 0}$ on $\Jb{\ic}\times \Jb{\ic}$ diagonal blocks for $1\le \ic\le m$ and having $1$'s on the diagonal blocks of size $1$.
In particular, \eqref{eq: x in UwJ} implies that each block $x(\Jb{\ic})$ belongs to $(U_{n_{\ic}}^-)_{\ge 0}$.

Conversely, suppose that a $J$-block diagonal matrix $x\in U^-$ satisfies $x(\Jb{\ic})\in (U_{n_{\ic}}^-)_{\ge0}$ for $1\le \ic\le m$. 
Then we have $x(\Jb{\ic})\in U_{n_{\ic}}^-(w_k)$ for some $w_k\in \mathfrak{S}_{J_{\ic}}$ by \eqref{eq: minus decomp} for $(U_{n_{\ic}}^-)_{\ge0}$.
Let $w_k=s_{i_1}s_{i_2}\cdots s_{i_p}$ be a reduced expression of $w_k$ for some $i_1, i_2,\ldots,i_p\in J_{\ic}$.
Then, we can write
\begin{equation}\label{eq: reduced word for xJk}
x(\Jb{\ic})=x_{i_1}(a_1)x_{i_2}(a_2)\cdots x_{i_p}(a_p) \in U_{n_{\ic}}^-(w_k)
\end{equation}
for some $(a_1,a_2,\cdots, a_p)\in \mathbb{R}^p_{>0}$ by the definition of $ U_{n_{\ic}}^-(w_k)$.
To study the entire matrix $x\in U^-$, we set 
\begin{equation*}
w\coloneqq w_1\cdots w_m\in \mathfrak{S}_J=\mathfrak{S}_{J_1}\times \mathfrak{S}_{J_2}\times \cdots\times\mathfrak{S}_{J_m}\subseteq \mathfrak{S}_n .
\end{equation*}
By listing the above (reduced) expressions of $w_k$ for $1\le \ic\le m$ in order, we obtain a reduced expression of $w$. Now, \eqref{eq: reduced word for xJk} for $1\le \ic\le m$ and the definition of $U^-(w)$ imply that
$
x \in U^-(w)
$
since $x\in U^-$ is $J$-block diagonal.
Since $U^-(w)\subseteq U_{\ge 0}^-$ by definition, we obtain that $x\in U_{\ge 0}^-$.
\end{proof}

\vspace{10pt}

\begin{lemma}\label{lem: yw0=u+b}
For $x\in U^-$, we have 
\begin{equation*}
x\dot{w}_0B^-\in (\G{n}/B^-)_{>0}
\quad \text{if and only if} \quad
x\in U^-_{>0}.
\end{equation*}
\end{lemma}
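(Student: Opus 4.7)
The plan is to prove the two implications separately, using three facts already established in the excerpt: (i) the characterization in \eqref{x lies in U-nonnegatiave} at $w = w_0$, namely $U^+(w_0)B^-/B^- = (\G{n}/B^-)_{\ge 0} \cap (B^+B^-/B^-) \cap (B^-\dot{w}_0B^-/B^-)$; (ii) the inclusion in \eqref{lemm: exptf acts on xwoB-}, i.e., $U^-_{>0} \cdot (\G{n}/B^-)_{\ge 0} \subseteq (\G{n}/B^-)_{\ge 0} \cap (B^+B^-/B^-)$; and (iii) the cell decomposition of $U^-_{\ge 0}$ together with the Bruhat description $U^-(w)=U^-_{\ge 0} \cap B^+\dot{w}B^+$ from \eqref{eq: minus w = bruhat}.

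For the ``if'' direction, assume $x \in U^-_{>0}$. Since $\dot{w}_0 B^-$ is a $T$-fixed point of $\G{n}/B^-$ lying in $(\G{n}/B^-)_{\ge 0}$ (a standard consequence of Lusztig's cell decomposition, or verified directly as the limit of $x_{i_1}(t)\cdots x_{i_N}(t)B^-$ as $t\to\infty$ along any reduced word of $w_0$), applying (ii) to $x$ and $\dot{w}_0B^-$ yields $x\dot{w}_0 B^- \in (\G{n}/B^-)_{\ge 0} \cap (B^+B^-/B^-)$. Combined with the trivial containment $x\dot{w}_0 B^- \in B^-\dot{w}_0 B^-/B^-$ (because $x\in U^-\subseteq B^-$), (i) then gives $x\dot{w}_0 B^- \in U^+(w_0)B^-/B^- = (\G{n}/B^-)_{>0}$.

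For the ``only if'' direction, I will set $V := \{x \in U^- \mid x\dot{w}_0 B^- \in (\G{n}/B^-)_{>0}\}$, which contains $U^-_{>0}$ by the ``if'' direction, and establish equality by a connectedness argument. The map $\tilde\psi\colon U^-\to B^-\dot{w}_0B^-/B^-$, $x\mapsto x\dot{w}_0 B^-$, is an isomorphism of varieties, and since $(\G{n}/B^-)_{>0}\subseteq B^-\dot{w}_0B^-/B^-$ by (i), it restricts to a homeomorphism $V\cong (\G{n}/B^-)_{>0}\cong U^+_{>0}\cong \mathbb{R}^N_{>0}$ with $N=\ell(w_0)$; in particular $V$ is a connected topological $N$-manifold, and so is $U^-_{>0}\cong \mathbb{R}^N_{>0}$ via its reduced-word parameterization. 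Invariance of domain applied to the continuous injection $U^-_{>0}\hookrightarrow V$ shows that $U^-_{>0}$ is open in $V$.

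The closedness of $U^-_{>0}$ in $V$ is the technical heart. Suppose $x_k \in U^-_{>0}$ converges to some $x\in V$. Since $U^-_{\ge 0}$ is closed in $U^-$ (e.g.\ by the nonnegativity-of-minors criterion for $(\G{n})_{\ge 0}$), $x\in U^-_{\ge 0}$, so $x\in U^-(w')$ for some $w'\in\Sn$ by \eqref{eq: minus decomp}. By (iii), write $x = b^+_1 \dot{w}' b^+_2$; using $\dot{w}_0 B^+\dot{w}_0^{-1}=B^-$,
\begin{equation*}
x\dot{w}_0 = b^+_1 \dot{w}' b^+_2 \dot{w}_0 = b^+_1\dot{w}'\dot{w}_0\cdot(\dot{w}_0^{-1} b^+_2 \dot{w}_0) \in B^+\dot{w}'\dot{w}_0 B^-,
\end{equation*}
hence $x\dot{w}_0 B^- \in \Omega_{w'w_0}^\circ$. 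But $x\in V$ forces $x\dot{w}_0 B^-\in (\G{n}/B^-)_{>0}\subseteq \Omega_e^\circ$, so disjointness of Bruhat cells forces $w'w_0=e$, i.e., $w'=w_0$, and thus $x\in U^-(w_0)=U^-_{>0}$. Hence $U^-_{>0}$ is open, closed, and nonempty in the connected space $V$, so $V=U^-_{>0}$. The main obstacle I anticipate is handling this closedness step: one needs the Bruhat-cell calculation above to rule out the limit $x$ from landing in a smaller cell $U^-(w')$ of $U^-_{\ge 0}$, and the disjointness of the $\Omega^\circ_w$ is exactly what closes the argument.
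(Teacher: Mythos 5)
Your argument is correct, but it takes a genuinely different route from the paper. The paper's own proof is a two-line reduction: it invokes Lusztig's result (\cite{lu94}, Theorem~8.7) that the isomorphism $\varphi\colon \G{n}/B^+\to\G{n}/B^-$, $gB^+\mapsto g\dot{w}_0B^-$, restricts to an isomorphism $(\G{n}/B^+)_{>0}\to(\G{n}/B^-)_{>0}$; then $x\in U^-_{>0}$ holds if and only if $xB^+\in(\G{n}/B^+)_{>0}$, if and only if $x\dot{w}_0B^-\in(\G{n}/B^-)_{>0}$, the converse using only that a point of $\G{n}/B^+$ has a unique representative in $U^-$. You instead rebuild both directions from the cell-theoretic statements quoted in this section: the forward direction via \eqref{lemm: exptf acts on xwoB-} and \eqref{x lies in U-nonnegatiave}, and the converse via an open--closed--connected argument on $V=\{x\in U^-\mid x\dot{w}_0B^-\in(\G{n}/B^-)_{>0}\}$, where the genuinely nice step is the Bruhat computation ($x\in U^-(w')$ gives $x\dot{w}_0B^-\in\Omega^{\circ}_{w'w_0}$, and disjointness of the opposite cells forces $w'=w_0$) that handles closedness. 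The paper's route buys brevity and no topology beyond the cited theorem; yours buys an argument that, apart from two standard inputs, runs entirely on facts displayed in the text. Those two inputs should be flagged if you write this up: (a) $\dot{w}_0B^-\in(\G{n}/B^-)_{\ge0}$, which is true (all $T$-fixed points lie in the nonnegative part, or your limit computation) but is not proved in the excerpt; and (b) for invariance of domain you need $V\cong(\G{n}/B^-)_{>0}\cong U^+_{>0}$ to be a topological manifold of dimension $N=\ell(w_0)$, i.e.\ that Lusztig's parametrization $\R^{N}_{>0}\to U^+_{>0}$ is a homeomorphism onto its image and not merely the continuous injection stated in the text; this is also standard (the inverse is given by ratios of minors, positive on $U^+_{>0}$), but it is an external citation on the same footing as the one the paper itself uses.
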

\begin{proof}
By \cite[Theorem~8.7]{lu94}, 
the isomorphism
\begin{equation*}
\varphi \colon \G{n}/B^+\to \G{n}/B^- 
\quad ; \quad 
gB^+\mapsto g\dot{w}_0B^-
\end{equation*}
restricts to an isomorphism between $(\G{n}/B^+)_{>0}$ and $(\G{n}/B^-)_{>0}$. 
We use this observation to prove the claim of this Lemma.

Suppose that $x\in U^-_{>0}$. Then we have $xB^+\in (\G{n}/B^+)_{>0}$ by the definition of $(\G{n}/B^+)_{>0}$ which means that 
\begin{equation*}
x\dot{w}_0B^- =\varphi(xB^+) \in (\G{n}/B^-)_{>0}
\end{equation*}
by the above observation.
Conversely, suppose that $x\dot{w}_0B^-\in (\G{n}/B^-)_{>0}$. Then,
\begin{equation*}
xB^+ = \varphi^{-1}(x\dot{w}_0B^-)\in (\G{n}/B^+)_{>0}
\end{equation*}
by the above observation.
By the definition of $(\G{n}/B^+)_{>0}$, this means that there exists $x'\in U^-_{>0}$ such that $xB^+=x'B^+$.
Since $x,x'\in U^-$, we conclude that $x=x'\in U^-_{>0}$.
\end{proof}

\vspace{10pt}

\begin{proposition}\label{prop: x in U-ge 0}

Let $J\subseteq \dyn$, and suppose that $x\in U^-$ is a $J$-block diagonal matrix. Then, we have
\begin{equation*}
x\dot{w}_JB^-\in (\G{n}/B^-)_{\ge 0}
\quad \text{if and only if} \quad
x\in U^-_{\ge0}.
\end{equation*}
\end{proposition}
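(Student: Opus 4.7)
The plan is to reduce the proposition to a block-wise application of Lemma~\ref{lem: yw0=u+b} via the decomposition $J=J_1\sqcup\cdots\sqcup J_m$ and the corresponding Levi subgroup $L\subseteq \G{n}$ consisting of $J$-block diagonal matrices, where $L\cong\prod_{\ic}\G{n_{\ic}}$ modulo the determinant constraint. The key compatibility is that a $J$-block diagonal matrix $g\in\G{n}$ lies in $(\G{n})_{\ge 0}$ if and only if each block $g(\Jb{\ic})$ lies in $(\G{n_{\ic}})_{\ge 0}$; this follows directly from Whitney's characterization \eqref{eq: type A nonnegativity} since any minor of a $J$-block diagonal matrix either vanishes (when the row and column indices are not distributed identically across the blocks) or factors as a product of block minors. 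Combined with Lemma~\ref{lemm: if and only if lemma which is needed}, this lets us move freely between the global condition in $\G{n}/B^-$ and the block-level statements in $\G{n_{\ic}}/B^-_{n_{\ic}}$, each of which is controlled by Lemma~\ref{lem: yw0=u+b} applied to the longest permutation matrix $\dot{w}_J(\Jb{\ic})$ of $\G{n_{\ic}}$.

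For $(\Leftarrow)$, suppose $x\in U^-_{\ge 0}$. By Lemma~\ref{lemm: if and only if lemma which is needed}, each block satisfies $x(\Jb{\ic})\in (U^-_{n_{\ic}})_{\ge 0}=\overline{(U^-_{n_{\ic}})_{>0}}$, so there exist approximations $x_\epsilon(\Jb{\ic})\in (U^-_{n_{\ic}})_{>0}$ converging to $x(\Jb{\ic})$ as $\epsilon\to 0^+$. Assembling them into a $J$-block diagonal matrix $x_\epsilon\in U^-$ (with identity on the size-one blocks), Lemma~\ref{lem: yw0=u+b} applied in each $\G{n_{\ic}}$ yields representatives $g_\epsilon^{(\ic)}\in (\G{n_{\ic}})_{>0}$ of $x_\epsilon(\Jb{\ic})\dot{w}_J(\Jb{\ic})B^-_{n_{\ic}}$. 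The $J$-block diagonal matrix $g_\epsilon\in\G{n}$ assembled from these blocks then lies in $(\G{n})_{\ge 0}$ by the compatibility above, and $g_\epsilon B^-=x_\epsilon\dot{w}_J B^-$ since the intervening change-of-representative element is itself $J$-block diagonal and hence in $B^-$. Since $(\G{n})_{\ge 0}=\overline{(\G{n})_{>0}}$ and the quotient is continuous, $g_\epsilon B^-\in(\G{n}/B^-)_{\ge 0}$. Passing to the limit $\epsilon\to 0^+$ and using the closedness of $(\G{n}/B^-)_{\ge 0}$ gives $x\dot{w}_J B^-\in(\G{n}/B^-)_{\ge 0}$.

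For $(\Rightarrow)$, suppose $x\dot{w}_JB^-\in(\G{n}/B^-)_{\ge 0}$. Using the surjectivity $(\G{n})_{\ge 0}\twoheadrightarrow(\G{n}/B^-)_{\ge 0}$ of the quotient map (a standard consequence of Lusztig's theory, cf.\ \cite[Sect.~8]{lu94}), pick $g\in(\G{n})_{\ge 0}$ with $g=x\dot{w}_J b$ for some $b\in B^-$. Decompose $b=b_L u_P$ using the Levi decomposition of $B^-$ inside the parabolic $P^-_J\subseteq\G{n}$ containing $B^-$ whose Levi is $L$, so that $b_L\in B^-\cap L$ and $u_P$ lies in the unipotent radical of $P^-_J$, which consists of unipotent matrices that are strictly block-lower with respect to the partition $(\Jb{1},\ldots,\Jb{m},\{p_1\},\ldots,\{p_s\})$. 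In particular $u_P$ is the identity on each $\Jb{\ic}\times\Jb{\ic}$ block, and a direct matrix computation gives $g(\Jb{\ic})=(x\dot{w}_J b_L)(\Jb{\ic})=x(\Jb{\ic})\dot{w}_J(\Jb{\ic})b_L(\Jb{\ic})$. Since every minor of $g$ is nonnegative, so is every minor of $g(\Jb{\ic})$ viewed as an element of $\G{n_{\ic}}$, giving $g(\Jb{\ic})\in(\G{n_{\ic}})_{\ge 0}$ and thus $x(\Jb{\ic})\dot{w}_J(\Jb{\ic})B^-_{n_{\ic}}\in(\G{n_{\ic}}/B^-_{n_{\ic}})_{\ge 0}$. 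The nonnegative analog of Lemma~\ref{lem: yw0=u+b} in $\G{n_{\ic}}$, obtained by combining Lemma~\ref{lem: yw0=u+b} with the closure identities $\overline{(U^-_{n_{\ic}})_{>0}}=(U^-_{n_{\ic}})_{\ge 0}$ and $\overline{(\G{n_{\ic}}/B^-_{n_{\ic}})_{>0}}=(\G{n_{\ic}}/B^-_{n_{\ic}})_{\ge 0}$ through the open embedding $u\mapsto u\dot{w}_0 B^-_{n_{\ic}}$, then forces $x(\Jb{\ic})\in(U^-_{n_{\ic}})_{\ge 0}$. Lemma~\ref{lemm: if and only if lemma which is needed} concludes $x\in U^-_{\ge 0}$.

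The main obstacle is the $(\Rightarrow)$ direction: the coset $x\dot{w}_JB^-$ does not lie in the big cell $B^+B^-/B^-$ (since $w_J$ is typically nontrivial), so \eqref{x lies in U-nonnegatiave} cannot be applied to it directly. The plan above circumvents this by first lifting to a totally nonnegative representative in $(\G{n})_{\ge 0}$ and then using the Levi decomposition of $B^-$ inside $P^-_J$ to align the representative with the $J$-block structure; at that point the global nonnegativity propagates to each block through Whitney's minor criterion and reduces the problem to the already-known case of Lemma~\ref{lem: yw0=u+b}.
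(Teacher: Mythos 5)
Your $(\Leftarrow)$ direction is essentially correct and runs parallel to the paper's argument: the paper produces the positive blocks by multiplying with $\exp(tf)$ rather than by an abstract approximation, but the structure (blockwise use of Lemma~\ref{lem: yw0=u+b}, assembling a $J$-block diagonal totally nonnegative representative, using $(\G{n})_{\ge0}=\overline{(\G{n})_{>0}}$, and passing to a limit) is the same. The genuine gap is the very first step of your $(\Rightarrow)$ direction: the claim that the quotient map $(\G{n})_{\ge 0}\to(\G{n}/B^-)_{\ge 0}$, $g\mapsto gB^-$, is surjective is false, and it fails exactly at the points your proposition must handle. Already for $n=2$, the coset $\dot{w}_0B^-$ lies in $(\G{2}/B^-)_{\ge 0}$ (it is one of the two boundary points of the interval, as observed in the paper right after the definition of $(\G{n}/B^-)_{\ge0}$), yet every representative of this coset is of the form $\dot{w}_0 b$ with $b\in B^-$, and such a matrix has $(1,2)$-entry $a^{-1}$ and $(2,1)$-entry $-a$, where $a\ne 0$ is the $(1,1)$-entry of $b$; these two entries cannot both be nonnegative, so by Whitney's criterion \eqref{eq: type A nonnegativity} the coset has no representative in $(\G{2})_{\ge0}$. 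Taking $J=\{1\}$ and $x=1$ in your setting shows the lift $g\in(\G{n})_{\ge0}$ with $g=x\dot{w}_Jb$ on which your entire block-restriction argument rests need not exist: the cosets $x\dot{w}_JB^-$ lie outside the big cell, and that is precisely where the surjectivity breaks down. (Even granting such a lift, the block $g(\Jb{\ic})$ need not have determinant one, so a rescaling would also be required before speaking of $(\G{n_{\ic}})_{\ge0}$.)

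The paper avoids this by regularizing first: for $t>0$ it multiplies by $y(t)=\exp(tf)\in U^-_{>0}$, so that by \eqref{lemm: exptf acts on xwoB-} the point $y(t)x\dot{w}_JB^-$ lies in $(\G{n}/B^-)_{\ge0}\cap(B^+B^-/B^-)$; since it also lies in $B^-\dot{w}_JB^-/B^-$, identity \eqref{x lies in U-nonnegatiave} supplies a representative in $U^+(w_J)$, whose $J$-block structure lets one restrict to the diagonal blocks, apply Lemma~\ref{lem: yw0=u+b} to get $(y(t)x)(\Jb{\ic})\in(U^-_{n_{\ic}})_{>0}$, and then let $t\to+0$ using the closedness of $(U^-_{n_{\ic}})_{\ge0}$ and Lemma~\ref{lemm: if and only if lemma which is needed}. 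Your idea of a ``nonnegative analogue'' of Lemma~\ref{lem: yw0=u+b} (pulling the closure identities back through the open embedding $u\mapsto u\dot{w}_0B^-$, whose image is the open Schubert cell containing $(\G{n}/B^-)_{>0}$) is sound in itself, but to use it you would still need to prove that each block satisfies $x(\Jb{\ic})\dot{w}_J(\Jb{\ic})B^-_{n_{\ic}}\in(\G{n_{\ic}}/B^-_{n_{\ic}})_{\ge0}$ --- and that is exactly the statement your nonexistent totally nonnegative lift was supposed to deliver. Some substitute, such as the $y(t)$-regularization used in the paper, is indispensable here.
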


\begin{proof}
Suppose that $x\dot{w}_JB^-\in (\G{n}/B^-)_{\ge 0}$.
Let $t>0$ and set $y(t)\coloneqq\exp(tf)$, where $f$ is the regular nilpotent matrix defined in \eqref{eq: def of f}.
Then we have  $y(t)\in U^-_{>0}$ by \cite[Proposition 5.9 (b)]{lu94}.
Thus, \eqref{lemm: exptf acts on xwoB-} implies that
\begin{equation*}
y(t)\cdot x\dot{w}_JB^-\in (\G{n}/B^-)_{\ge0}\cap (B^+B^-/B^-).
\end{equation*}
Note that we also have
\begin{equation*}
y(t)\cdot x\dot{w}_JB^-\in B^-\dot{w}_JB^-/B^-
\end{equation*}
since $y(t)$ and $x$ belong to $B^-$.
Therefore, we obtain that
\begin{equation*}
y(t)\cdot x\dot{w}_JB^-\in 
(\G{n}/B^-)_{\ge0}\cap (B^+B^-/B^-)\cap (B^-\dot{w}_JB^-/B^-) 
=U^+(w_J)B^- ,
\end{equation*}
where the last equality follows from \eqref{x lies in U-nonnegatiave}. 
This means that there exists $u^+\in U^+(w_J)$ and $b^-\in B^-$ such that
\begin{equation*}
y(t)\cdot x\dot{w}_J=u^+b^-.
\end{equation*}
Notice that all the matrices in this equality are $J$-lower triangular (see Example~\ref{ex: U plus wJ} for the form of $u^+$).
Thus, by focusing on each $\Jb{\ic}\times \Jb{\ic}$ diagonal block, we obtain 
\begin{equation*}
\big(y(t)x\big)(\Jb{\ic})\cdot \dot{w}_J(\Jb{\ic})=u^+(\Jb{\ic})\cdot b^-(\Jb{\ic})
\end{equation*}
for $1\le \ic\le m$.
Here, we have $u^+(\Jb{\ic})\in (U_{n_{\ic}}^+)_{>0}$ since $u^+\in U^+(w_J)$ (see Example~\ref{ex: U plus wJ}). Hence, we obtain that
\begin{equation*}
\big(y(t)x\big)(\Jb{\ic})\cdot \dot{w}_J(\Jb{\ic}) B_{n_{\ic}}^-
\in (\G{n_{\ic}}/B_{n_{\ic}}^-)_{>0},
\end{equation*}
where we note that $\dot{w}_J(\Jb{\ic})$ is the longest permutation matrix in $\G{n_{\ic}}$.
Applying Lemma~\ref{lem: yw0=u+b} to this equality, we obtain that 
\begin{equation}\label{eq: ytx nonnegative}
\big(y(t)x\big)(\Jb{\ic})\in (U_{n_{\ic}}^-)_{>0}.
\end{equation}
Note that we have $\lim_{t\rightarrow+0}y(t)=1$.
Since $(U_{n_{\ic}}^-)_{\ge0}$ is a closed subset of $U_{n_{\ic}}^-$ containing $(U_{n_{\ic}}^-)_{>0}$ (\cite[Proposition 4.2]{lu94}), 
it follows that $x(\Jb{\ic})\in (U_{n_{\ic}}^-)_{\ge0}$ by taking $t\rightarrow+0$ in \eqref{eq: ytx nonnegative}. Therefore, we obtain $x\in U^-_{\ge0}$ by Lemma~\ref{lemm: if and only if lemma which is needed}.

Conversely, suppose that $x\in U^-_{\ge 0}$.
Let $t>0$, and set $y(t)=\exp(tf)\in U^-_{>0}$ as above. 
Since $x\in U^-_{\ge0}$, we have
$x(\Jb{\ic})\in (U_{n_{\ic}}^-)_{\ge 0}$ by Lemma~\ref{lemm: if and only if lemma which is needed}. 
Noticing that $y(t)(\Jb{\ic})=\exp(tf(\Jb{\ic}))\in (U_{n_{\ic}}^-)_{>0}$, we have
\begin{equation*}
 \big(y(t)(\Jb{\ic})\big)\cdot x(\Jb{\ic})\in (U_{n_{\ic}}^-)_{>0}
\end{equation*}
by \cite[Section~2.12]{lu94}.
By Lemma~\ref{lem: yw0=u+b}, we obtain that
\begin{equation*}
 \big(y(t)(\Jb{\ic})\big)\cdot x(\Jb{\ic})\cdot \dot{w}_J(\Jb{\ic}) \in (\G{n_{\ic}}/B_{n_{\ic}}^-)_{>0},
\end{equation*}
where $\dot{w}_J(\Jb{\ic})$ is the longest permutation matrix in $\G{n_{\ic}}$.
Namely, there exists $b_{\ic}^-\in B_{n_{\ic}}^-$ and $u_{\ic}^+\in (U_{n_{\ic}}^+)_{>0}$ such that 
\begin{equation*}
 \big(y(t)(\Jb{\ic})\big)\cdot x(\Jb{\ic})\cdot(\dot{w}_J)(\Jb{\ic})=u_{\ic}^+b_{\ic}^-.
\end{equation*}
Let $u^+\in U^+$ be the $J$-block diagonal matrix having $u_{\ic}^+$ for $\Jb{\ic}\times \Jb{\ic}$ diagonal block for $1\le \ic\le m$ and $1$'s on the diagonal blocks of size $1$.
Similarly, define $b^-\in B^-$ and $\widetilde{y}(t)\in U^+$ by the same manner by using $b_{\ic}^-$ and $y(t)(\Jb{\ic})$, respectively.
Then, we have 
\begin{equation*}
 \widetilde{y}(t)\cdot x\cdot \dot{w}_J  =u^+b^-.
\end{equation*}
Since $u^+\in U^+$ satisfies $u^+(\Jb{\ic})=u_{\ic}^+\in (U_{n_{\ic}}^+)_{\ge0}$ for all $1\le \ic\le m$, one can verify that 
\begin{equation*}
 u^+\in U^+_{\ge 0}\subseteq (\G{n})_{\ge0}
\end{equation*}
by an argument which is completely parallel to the proof of Lemma~\ref{lemm: if and only if lemma which is needed}.
Since we have $(\G{n})_{\ge0}=\overline{(\G{n})_{>0}}$ (\cite[Remark~4.4]{lu94}), 
the element $u^+$ can be written as the limit of a sequence in $(\G{n})_{>0}$.
Thus, we have 
\begin{equation*}
 \widetilde{y}(t)\cdot (x\dot{w}_J)B^-=u^+B^-\in(\G{n}/B^-)_{\ge 0}.
\end{equation*}
Therefore, we obtain that
\begin{equation*}
 x\dot{w}_JB^-=\lim\limits_{t\to +0}\big(\widetilde{y}(t)\cdot (x\dot{w}_J)B^-\big)\in(\G{n}/B^-)_{\ge 0}
\end{equation*}
since $(\G{n}/B^-)_{\ge 0}$ is a closed subset of $\G{n}/B^-$.
\end{proof}

\vspace{10pt}

We now give a concrete description of $\RS_{K,J;>0}$ defined in \eqref{eq: def of YKJ>0}.

\begin{proposition}\label{prop: description for R KJ>0}
Let $K\subseteq J\subseteq \dyn.$ Then, we have 
\begin{equation*}
\RS_{K,J;>0}=\left\{x\dot{w}_JB^- \ \left| \ 
\begin{matrix}
\text{
\rm $x\in U^-_{\ge0}$ is $J$-Toeplitz and}
~\begin{cases}\Delta_{i}(x\dot{w}_J)=0\quad\text{if}\quad i\in K\\
\Delta_{i}(x\dot{w}_J)> 0\quad\text{if}\quad i\notin K
\end{cases}
\end{matrix}
\right.
\right\}.
\end{equation*}
\end{proposition}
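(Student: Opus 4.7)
My plan is to prove the stated equality by establishing the two inclusions between $\RS_{K,J;>0}$ and the set on the right-hand side. The two workhorses will be Corollary~\ref{coro: a description for ZKJ} (which describes $\RS_{K,J}$ purely in terms of $J$-Toeplitz representatives and the vanishing pattern of their lower-right minors) and Proposition~\ref{prop: x in U-ge 0} (which characterizes when $x\dot{w}_JB^-$ lies in $(\G{n}/B^-)_{\ge 0}$ in terms of $x\in U^-_{\ge 0}$), together with the Whitney/Lusztig characterization recalled in \eqref{eq: type A nonnegativity} that elements of $(\G{n})_{\ge 0}$ are exactly the matrices with all minors $\ge 0$.

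The forward inclusion is almost immediate: given $x\dot{w}_JB^-$ in the right-hand side, the minor conditions $\Delta_i(x\dot{w}_J)=0$ for $i\in K$ and $\Delta_i(x\dot{w}_J)>0$ for $i\notin K$ imply the hypotheses of Corollary~\ref{coro: a description for ZKJ} (since $>0$ implies $\ne 0$), so $x\dot{w}_JB^-\in\RS_{K,J}$; meanwhile, $x\in U^-_{\ge 0}$ combined with Proposition~\ref{prop: x in U-ge 0} (applicable because any $J$-Toeplitz matrix is a $J$-block diagonal element of $U^-$) gives $x\dot{w}_JB^-\in(\G{n}/B^-)_{\ge 0}$, so the element lies in the intersection $\RS_{K,J;>0}$.

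For the converse, take $gB^-\in\RS_{K,J;>0}$. Corollary~\ref{coro: a description for ZKJ} supplies a representation $gB^-=x\dot{w}_JB^-$ with $x$ being $J$-Toeplitz and $\Delta_i(x\dot{w}_J)=0\iff i\in K$, and Proposition~\ref{prop: x in U-ge 0} promotes $x\dot{w}_JB^-\in(\G{n}/B^-)_{\ge 0}$ to $x\in U^-_{\ge 0}$. The remaining task is to upgrade ``$\ne 0$'' to ``$>0$'' for $i\notin K$, and I plan to prove the stronger statement that $\Delta_i(x\dot{w}_J)\ge 0$ for every $i\in[n-1]$. Since $x$ is $J$-Toeplitz, $x\dot{w}_J$ is $J$-block diagonal whose $\Jb{\ic}\times\Jb{\ic}$ blocks are the anti-Toeplitz matrices $x(\Jb{\ic})\dot{w}_J(\Jb{\ic})$ and whose singleton blocks are the scalar $1$. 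When $i\notin J$, the lower-right $(n-i)\times(n-i)$ submatrix decomposes along the block lines into complete diagonal blocks of determinant $1$, so $\Delta_i(x\dot{w}_J)=1$. When $i\in J_\ic$, only the block $\Jb{\ic}$ is straddled and the computation reduces to $\Delta_{i'}(x(\Jb{\ic})\dot{w}_J(\Jb{\ic}))$ in $\G{n_\ic}$, where $i'$ is the position of $i$ inside $\Jb{\ic}$ as in \eqref{eq: def of j'}. A direct column-reversal calculation using the explicit form of the longest permutation matrix $\dot{w}_J(\Jb{\ic})$ in $\G{n_\ic}$ then identifies this minor with the lower-left $(n_\ic-i')\times(n_\ic-i')$ minor of the Toeplitz block $x(\Jb{\ic})$. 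Lemma~\ref{lemm: if and only if lemma which is needed} yields $x(\Jb{\ic})\in (U_{n_\ic}^-)_{\ge 0}$, so by \eqref{eq: type A nonnegativity} all its minors are nonnegative, giving $\Delta_i(x\dot{w}_J)\ge 0$. Combined with the nonvanishing for $i\notin K$, this gives $\Delta_i(x\dot{w}_J)>0$ as required.

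The main obstacle I anticipate is the sign bookkeeping in the column-reversal step: the sign $(-1)^{(n_\ic-i')(n_\ic-i'-1)/2}$ coming from reversing $n_\ic-i'$ columns must exactly cancel the product of the alternating entries $(-1)^{c-1}$ of $\dot{w}_J(\Jb{\ic})$ picked out by those columns, so that the result is the lower-left minor of $x(\Jb{\ic})$ with a plus sign. This cancellation is what makes the Whitney/Lusztig nonnegativity applicable, and since it is not quite covered by an off-the-shelf lemma in the paper, it has to be verified by an explicit determinantal computation.
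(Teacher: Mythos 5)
Your proposal is correct and follows essentially the same route as the paper's proof: both inclusions are handled via Corollary~\ref{coro: a description for ZKJ} and Proposition~\ref{prop: x in U-ge 0}, and the upgrade from $\ne 0$ to $>0$ is obtained exactly as in the paper by showing $\Delta_i(x\dot{w}_J)\ge 0$ through the identification of $\Delta_{i'}\big(x(\Jb{\ic})\dot{w}_J(\Jb{\ic})\big)$ with a lower-left minor of the block $x(\Jb{\ic})$ and invoking the Whitney characterization \eqref{eq: type A nonnegativity}. The sign cancellation you flag is precisely the ``direct calculation'' the paper asserts (and it does work out to a plus sign), so your argument matches the paper's in substance.
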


\begin{proof}
Denoting the right hand side of the desired equality by $\RS'_{K,J;>0}$, we show that $\RS_{K,J;>0}=\RS'_{K,J;>0}$. We first show that
\begin{align}\label{eq: first goal RKJ nonnegative}
\RS_{K,J;>0} \supseteq \RS'_{K,J;>0}.
\end{align}
By definition, an arbitrary element of $\RS'_{K,J;>0}$ can be written as $x\dot{w}_JB^-$ for some $J$-Toeplitz matrix $x\in U^-_{\ge0}$ such that
\begin{equation*}
~\begin{cases}\Delta_{i}(x\dot{w}_J)=0\quad\text{if}\quad i\in K\\
\Delta_{i}(x\dot{w}_J)> 0\quad\text{if}\quad i\notin K .
\end{cases}
\end{equation*}
By Corollary~\ref{coro: a description for ZKJ}, this means that $x\dot{w}_JB^-\in \RS_{K,J}$.
Since $x\in U^-_{\ge0}$ is $J$-Toeplitz, Proposition~\ref{prop: x in U-ge 0} implies that 
\begin{equation*}
x\dot{w}_JB^-\in (\G{n}/B^-)_{\ge 0}. 
\end{equation*}
Therefore, we conclude that $x\dot{w}_JB^-\in \RS_{K,J}\cap (\G{n}/B^-)_{\ge 0}=\RS_{K,J;>0}$ as claimed in \eqref{eq: first goal RKJ nonnegative}.

To complete the proof, we prove that the opposite inclusion holds in what follows.
Since $\RS_{K,J;>0}=\RS_{K,J}\cap (\G{n}/B^-)_{\ge0}$, 
Corollary~\ref{coro: a description for ZKJ} implies that an arbitrary element of $\RS_{K,J;>0}$ can be written as $x\dot{w}_JB^-$ for some $J$-Toeplitz matrix $x\in U^-$ such that
\begin{align*}
\text{$\Delta_{i}(x\dot{w}_J)=0$ if and only if $i\in K$.}
\end{align*}
Since $x\dot{w}_JB^-\in\RS_{K,J;>0}=\RS_{K,J}\cap (\G{n}/B^-)_{\ge0}$ and $x\in U^-$ is $J$-Toeplitz, it follows that $x\in U^-_{\ge0}$ by Proposition~\ref{prop: x in U-ge 0}.
Thus, to see that $x\dot{w}_JB^-\in \RS'_{K,J;>0}$, it suffices to show that $\Delta_{i}(x\dot{w}_J)\ge0$ for all $i\in \dyn$.
For this purpose, let us make an observation. For a matrix $g\in \G{n}$, the function $\Delta_i(g)$ was defined to be the $(n-i)\times (n-i)$ lower right minor of $g$ (in Section~\ref{subsec: description of RKJ}).
From this, one can verify that $\Delta_i(g\dot{w}_0)$ is the $(n-i)\times (n-i)$ lower \textit{left} minor of $g$.

Based on this observation, we now prove that $\Delta_{i}(x\dot{w}_J)\ge 0$ for all $i\in \dyn$. We take cases. First, suppose that $i\in J$. In this case, we have $i\in J_{\ic}$ for some $1\le \ic\le m$. Set $i'\coloneqq i+1-\min J_{\ic}$ which encodes the position of $i$ in $J_{\ic}$ (as in \eqref{eq: def of j'}).
This notation allows us to express the minor $\Delta_i(x\dot{w}_J)$ in terms of minors of diagonal blocks $(x\dot{w}_J)(\Jb{\ic})$:
\begin{align*}
\Delta_i(x\dot{w}_J)&=\Delta_{i'}\big((x\dot{w}_J)(\Jb{\ic})\big)
=\Delta_{i'}\big(x(\Jb{\ic})\cdot\dot{w}_J(\Jb{\ic})\big),
\end{align*}
where we note that $\dot{w}_J(\Jb{\ic})$ is the longest permutation matrix in $\G{n_{\ic}}$.
By applying the above observation to $\G{n_{\ic}}$, it follows that the right most expression in this equality is a lower left minor of $x(\Jb{\ic})$ which is a minor of $x$.
Since we saw $x\in U^-_{\ge 0}\subseteq (\G{n_{\ic}})_{\ge0}$ above, Whitney's description \eqref{eq: type A nonnegativity} for $(\G{n_{\ic}})_{\ge0}$ implies that $\Delta_i(x\dot{w}_J)$ (which is a minor of $x$ as we just saw) is nonnegative.

If $i\notin J$, then we have $\Delta_i(x\dot{w}_J)=1\ge0$ since $x\dot{w}_J$ is a $J$-block diagonal matrix having $1$'s on the diagonal blocks of size $1$. Hence, the claim follows.
\end{proof}

\vspace{10pt}

\begin{example}\label{eg: RJK nonnegative}
\rm
Let $n=5$. We computed $\RS_{\{1\},\{1,2,\ \ ,4\}}$ in Example~\ref{eg: RJK}.
By Proposition~\ref{prop: description for R KJ>0} with Lemma~\ref{lemm: if and only if lemma which is needed} in mind, an arbitrary element of $\RS_{\{1\},\{1,2,\ \ ,4\};>0}$ can be written as
\begin{equation*}
  \left(
  \begin{array}{@{\,}ccc|ccc@{\,}}
      0&0&1&0&0\\
      0&-1&a&0&0\\
      1&-a&b&0&0\\ \hline
      0&0&0&0&1\\
      0&0&0&-1&x
  \end{array}
  \right)
\end{equation*}
for some $a,b,x\in\R_{\ge0}$ such that $\Delta_1=a^2-b= 0, \ \Delta_2=b>0, \ 
  (\Delta_3=1), \ \Delta_4=x>0$.
\end{example}

\vspace{20pt}

\section{A morphism from $Y$ to $ X(\fan)$}
In \cite{ab-ze23}, the authors constructed a particular morphism from the Peterson variety $Y$ to the toric orbifold $X(\fan)$. 
In this section, we review its construction since we use it to prove Rietsch's conjecture on $Y_{\ge0}$ in the next section. Details can be found in \cite[Sect.~6]{ab-ze23}.

We keep the notations from Section~\ref{sec: notations} where we defined the simple roots $\alpha_i \colon T\rightarrow \C^{\times}$ and the fundamental weights $\varpi_i \colon T\rightarrow \C^{\times}$ for $i\in\dyn$.
By composing with the canonical projection $B^-\onto T$, we may regard them as a homomorphisms from $B^-$, and we denote them by the same symbols $\alpha_i$ and $\varpi_i$ by abusing notations.

\subsection{Two functions}
For $i\in\dyn$, we considered in Section~\ref{subsec: description of RKJ} the function
\begin{equation*}
\Delta_{i} \colon \G{n}\to\C
\end{equation*}
which takes the lower right $(n-i)\times (n-i)$ minor of a given matrix in $\G{n}$ (see the examples in \eqref{eq:4.3 second}).
For $u\in U^+$ and $b=(b_{i,j})\in B^-$, we have
\begin{equation}\label{eq:4.3}
 \Delta_i(ub)=\Delta_i(b)=b_{i+1,i+1}b_{i+2,i+2}\cdots b_{n,n}=(-\varpi_{i})(b),
\end{equation}
where we write $(-\varpi_i)(b)\coloneqq \varpi_i(b)^{-1}$ for $b\in B^-$. 
Since $U^+B^-=B^+B^-\subseteq \G{n}$ is a Zariski open subset, this means that $\Delta_i$ is the unique regular function on $\G{n}$ satisfying the above equality. In particular, $\Delta_i$ is an extension of the character $-\varpi_{i}$ on $B^-$.
Moreover, the function $\Delta_{i}\colon \G{n}\to\C$ is $B^-$-equivariant in the following sense:
\begin{equation*}
\Delta_{i}(gb)
=(-\varpi_{i})(b)\Delta_{i}(g)
\qquad (g\in \G{n}, \ b\in B^-).
\end{equation*}

Let $P_Y$ be the restriction of the principal $B^-$-bundle $\G{n}\to \G{n}/B^-$ on the Peterson variety $Y$;
\begin{equation*}
P_Y \coloneqq \{g\in \G{n} \mid gB^-\in Y\}=\{g\in \G{n} \mid (\Ad{g^{-1}} f)_{i,j}=0 \ (j>i+1)\},
\end{equation*}
where $f$ is the regular nilpotent matrix given in \eqref{eq: def of f}, and $(\Ad{g^{-1}}f)_{i,j}$ denotes the $(i,j)$-th component of the matrix $\Ad{g^{-1}}f$.
By definition, $P_Y\subseteq G$ is preserved by the multiplication of elements of $B^-$ from the right, and hence $P_Y$ inherits a right $B^-$-action. With this action, $P_Y$ is a principal $B^-$-bundle over $Y$ in the 
sense that the projection map $P_Y\rightarrow Y$ is $B^-$-equivariantly locally trivial.
Now consider a function
\begin{equation*}
q_i \colon P_Y\to\C;\quad g\mapsto -(\Ad{g^{-1}}f)_{i,i+1}.
\end{equation*}
This function is $B^-$-equivariant in the following sense; for $g\in P_Y$ and $b\in B^-$, we have 
\begin{equation}\label{eq:4.5}
q_i(gb)
= -\big(\Ad{b^{-1}}(\Ad{g^{-1}}f)\big)_{(i,i+1)}
= (-\alpha_i)(b)q_i(g),
\end{equation}
where the last equality follows since $gB\in Y$ implies that $(\Ad{g^{-1}} f)_{i,j}=0$ for $j>i+1$.
Here, we also write $(-\alpha_i)(b)\coloneqq \alpha_i(b)^{-1}$ for $b\in B^-$. 

These two functions $\Delta_i$ and $q_i$ give rise to sections of certain line bundles on $Y$ (\cite[Section~5]{ab-ze23}), and the following proposition means that their zero loci do not intersect for each $i\in\dyn$.

\begin{proposition}[{\cite[Proposition~5.6]{ab-ze23}}]\label{prop:Delta i intersects ad alpha i=emptyset}
For $i\in \dyn$, we have
\begin{equation*}
\{gB^-\in Y \mid \Delta_{i}(g)=0 \ \text{and}\ q_i(g)=0\}=\emptyset.
\end{equation*}
\end{proposition}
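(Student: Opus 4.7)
My plan is to translate both conditions into geometric statements about the flag $V_\bullet$ in $\C^n$ corresponding to $gB^-\in Y$, and then invoke the regularity of $f$ to reach a contradiction. Set $v_k\coloneqq g\bm{e}_k$ and $V_j\coloneqq \text{span}(v_{n-j+1},\ldots,v_n)$ (the span of the last $j$ columns of $g$), so that $0=V_0\subset V_1\subset\cdots\subset V_n=\C^n$ is the flag represented by $gB^-$. Unwinding $g^{-1}fg\,\bm{e}_j=g^{-1}(fv_j)$, one sees that the defining condition $(\Ad{g^{-1}}f)_{k,j}=0$ for $j>k+1$ is equivalent to the Hessenberg condition $fV_j\subseteq V_{j+1}$ for all $j$.

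First I would verify that $\Delta_i(g)=0$ iff $V_{n-i}\cap\text{span}(\bm{e}_1,\ldots,\bm{e}_i)\neq\{0\}$: the minor $\Delta_i(g)$ is (up to sign) the determinant of the composite $V_{n-i}\hookrightarrow\C^n\twoheadrightarrow\C^n/\text{span}(\bm{e}_1,\ldots,\bm{e}_i)$, so its vanishing is precisely the nontriviality of the kernel. Next I would show that $q_i(g)=0$ iff $V_{n-i}$ is $f$-invariant: for each $j\ge i+2$, the Hessenberg condition already yields $fv_j\in V_{n-j+2}\subseteq V_{n-i}$, so the question reduces to whether $fv_{i+1}\in V_{n-i}$. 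Expanding $fv_{i+1}=g\cdot(\Ad{g^{-1}}f)\bm{e}_{i+1}$ in the basis $(v_1,\ldots,v_n)$, and using that the $(i+1)$-th column of $\Ad{g^{-1}}f$ has entries only in rows $i,i+1,\ldots,n$, the coefficient of $v_i$ in $fv_{i+1}$ is exactly $-q_i(g)$ while all other terms lie in $V_{n-i}$. Hence $q_i(g)=0$ is equivalent to $fV_{n-i}\subseteq V_{n-i}$.

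To conclude, since $f$ is a regular nilpotent matrix (a single Jordan block of size $n$), the only $f$-invariant subspaces of $\C^n$ are $\{\ker f^k\}_{k=0}^{n}$, and the unique $(n-i)$-dimensional one is $\ker f^{n-i}=\text{span}(\bm{e}_{i+1},\ldots,\bm{e}_n)$. If both $\Delta_i(g)=0$ and $q_i(g)=0$ held, then $V_{n-i}=\text{span}(\bm{e}_{i+1},\ldots,\bm{e}_n)$, which meets $\text{span}(\bm{e}_1,\ldots,\bm{e}_i)$ only at $\{0\}$, contradicting $\Delta_i(g)=0$. The main piece of work is the translation step for $q_i(g)$, which is where the Hessenberg structure of $\Ad{g^{-1}}f$ is used nontrivially; the rest follows from the rigidity of invariant subspaces for a regular nilpotent.
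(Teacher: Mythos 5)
Your argument is correct, and it is genuinely different from what the paper does: the paper does not prove the statement internally at all, but rather observes that $\Delta_i$ and $q_i$ are the functions $\Delta_{-\varpi_i}$ and $q_{-\alpha_i}$ attached to the fundamental weights and simple roots of $B^-$ and then cites \cite[Proposition~5.6]{ab-ze23} (with a remark pointing to \cite[Lemma~3.9]{AHKZ} for an elementary proof). Your proof is a self-contained linear-algebra argument: you identify $V_{n-i}$ with the span of the last $n-i$ columns of $g$, check that the Peterson (Hessenberg) condition gives $fV_j\subseteq V_{j+1}$, translate $\Delta_i(g)=0$ into $V_{n-i}\cap\operatorname{span}(\bm{e}_1,\ldots,\bm{e}_i)\neq\{0\}$ via the composite $V_{n-i}\hookrightarrow\C^n\twoheadrightarrow\C^n/\operatorname{span}(\bm{e}_1,\ldots,\bm{e}_i)$, and translate $q_i(g)=0$ into $f$-invariance of $V_{n-i}$ by isolating the coefficient of $v_i$ in $fv_{i+1}$; the rigidity of invariant subspaces of the regular nilpotent ($\ker f^{n-i}=\operatorname{span}(\bm{e}_{i+1},\ldots,\bm{e}_n)$ is the unique one of dimension $n-i$) then forces $V_{n-i}$ to be transverse to $\operatorname{span}(\bm{e}_1,\ldots,\bm{e}_i)$, a contradiction. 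All steps check out (the well-definedness of $V_\bullet$ on the coset, the equivalence with the Hessenberg condition, and the two translations are each verified correctly), so your route trades the paper's brevity and its tie-in to the line-bundle/section framework of \cite{ab-ze23} for a completely elementary, representation-free proof along the lines of the cited \cite[Lemma~3.9]{AHKZ}.
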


\begin{proof}
Recall that we defined the Peterson variety $Y$ in $\G{n}/B^-$ 
(Definition~\ref{definition of complex Peterson variety}).
The roots $-\alpha_1,\ldots,-\alpha_{n-1}$ can be thought as the set of simple roots for $B^-$, and then the matrix $f$ (which we used to define $Y$) is a sum of simple root vectors for $B^-$.
Also, $-\varpi_1,\ldots,-\varpi_{n-1}$ are the fundamental weights of $B^-$ with respect to $-\alpha_1,\ldots,-\alpha_{n-1}$. 
Now, in the language of \cite[Sect.~5]{ab-ze23}, the functions $\Delta_{i}$ and $q_i$ are written as $\Delta_{-\varpi_i}$ and $q_{-\alpha_i}$, respectively.
Therefore, the claim follows from \cite[Proposition~5.6]{ab-ze23} by taking $B^-$ as our choice of a Borel subgroup of $\G{n}$.
\end{proof}

\begin{remark}
{\rm
For an elementary proof of Proposition~\ref{prop:Delta i intersects ad alpha i=emptyset}, see \cite[Lemma~3.9]{AHKZ}.
}
\end{remark}

\vspace{10pt}

\subsection{Construction of the morphism $\Psi$}
Recall we have $X(\fan)=(\C^{\ddyn}-\EL)/T$ from Section~\ref{sec: Toric orbifold}.
Recall also that $P_Y$ is the restriction of the principal $B^-$-bundle $\G{n}\to \G{n}/B^-$ on the Peterson variety $Y$.
Now consider a morphism $\widetilde{\Psi} \colon P_Y\to\C^{\ddyn}$
given by
\begin{equation*}
\widetilde{\Psi}(g)=(\Delta_{1}(g),\dots, \Delta_{n-1}(g); q_{1}(g),\dots, q_{n-1}(g))
\end{equation*}
for $g\in P_Y$. Proposition~\ref{prop:Delta i intersects ad alpha i=emptyset} implies that $\Delta_{i}(g)$ and $q_i(g)$ can not be zero simultaneously for each $i\in \dyn.$ Thus, the image of $\widetilde{\Psi}$ lies in $\C^{\ddyn}- \EL$ which was defined in \eqref{eq: def of C-E}. Namely, we have
\begin{equation*}
\widetilde{\Psi} \colon P_Y\to \C^{\ddyn}- \EL.
\end{equation*}
Combining this with the quotient morphism $\C^{\ddyn}- \EL\rightarrow (\C^{\ddyn}- \EL)/T$, we obtain a morphism $P_Y\rightarrow X(\fan)=(\C^{\ddyn}- \EL)/T$.
Recall that $P_Y$ admits the multiplication of $B^-$ from the right. The $B^-$-equivariance (\ref{eq:4.3}) and (\ref{eq:4.5}) now imply that the resulting morphism $P_Y\rightarrow X(\fan)$ is $B^-$-invariant, where we note that $(-\varpi_{i})(t)=\varpi_i(t^{-1})$ and $(-\alpha_{i})(t)=\alpha_i(t^{-1})$ for $t\in T$.
Since the map $P_Y \rightarrow Y$ is $B^-$-equivariantly locally trivial, we obtain a morphism
\begin{equation*}
\Psi \colon Y\to X(\fan)
\end{equation*}
given by 
\begin{equation*}
\Psi(gB^-)=\big[\Delta_{1}(g),\dots, \Delta_{n-1}(g); q_{1}(g),\dots, q_{n-1}(g)\big]
\end{equation*}
for $gB^-\in Y$.

\vspace{10pt} 

\begin{remark}
{\rm
The toric orbifold $X(\fan)$ was defined as the quotient of $\C^{\ddyn}- \EL$ by the linear $T$-action given in \eqref{eq: def of T-action on C2r}. Since we have
\begin{align*}
(-\varpi_{i})(t)=\varpi_i(t^{-1}),\quad
(-\alpha_{i})(t) = \alpha_i(t^{-1})
\qquad (t\in T)
\end{align*}
for $i\in \dyn$, it follows that $X(\fan)$ can also be constructed from the linear $T$-action on $\C^{\ddyn}-\EL$ defined by the weights
$
-\varpi_1,\ldots,-\varpi_{n-1}, -\alpha_1,\ldots,-\alpha_{n-1}
$
which are fundamental weights and simple roots for the Borel subgroup $B^-\subseteq \G{n}$.
Therefore, the above morphism $\Psi\colon Y\rightarrow X(\fan)$ agrees with the one constructed in \cite{ab-ze23} by taking $B^-$ as our choice of a Borel subgroup of $\G{n}$.
}
\end{remark}

\vspace{20pt}

\section{Relation between $Y_{\ge 0}$ and $X(\fan)_{\ge0}$}

In this section, we show that $Y_{\ge 0}$ is homeomorphic to $X(\fan)_{\ge0}$
under the map $\Psi\colon Y\to X(\fan)$ given in the last section.

For each subset $J\subseteq\dyn$, we have the decomposition $J = J_1 \sqcup J_2 \sqcup \cdots \sqcup J_{m}$ into the connected components. 
In the rest of this section, we use the same notations $\Jb{\ic}$, $n_{\ic}=|\Jb{\ic}|$, $\G{n_{\ic}}$, $x(\Jb{\ic})$ etc for $1\le \ic\le m$ prepared to prove Corollary~\ref{coro: a description for ZKJ}.

\subsection{Restriction of $\Psi \colon Y\rightarrow X(\fan)$ to nonnegative parts}
We first prove that the map $\Psi \colon Y\rightarrow X(\fan)$ restricts to a map from $Y_{\ge0}$ to $X(\fan)_{\ge0}$.
To this end, recall that we have the decompositions
\begin{equation}\label{eq: two decompositions}
Y_{\ge 0}=\bigsqcup_{K\subseteq J\subseteq \dyn}\RS_{K,J;>0}
\quad \text{and} \quad
X(\fan)_{\ge0} = \bigsqcup_{K\subseteq J \subseteq \dyn} \Xp{K,J} 
\end{equation}
from \eqref{decomposition of nonnegative part of Peterson} and \eqref{eq: decomp of X nonnegative KJ}.
We prove that $\Psi$ sends each stratum $\RS_{K,J;>0}$ to $\Xp{K,J}$ in what follows.

\begin{lemma}\label{lemm: ad part for xwJB-}
Let $J\subseteq\dyn$.
If $x\in \G{n}$ is J-Toeplitz then 
\begin{equation*}
q_i(x\dot{w}_J)=-\big(\Ad{(x\dot{w}_J)^{-1}} f\big)_{i,i+1}=\begin{cases}
0\quad\text{if\quad$i\notin J$}\\
1\hspace{4mm}\text{if\quad$i\in J$},
\end{cases}
\end{equation*}
where $f$ is the regular nilpotent matrix given in \eqref{eq: def of f}.
\end{lemma}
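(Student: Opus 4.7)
The plan is to exploit the $J$-block diagonal structure of $h\coloneqq x\dot{w}_J$: since $x$ is $J$-Toeplitz and $\dot{w}_J$ is $J$-block diagonal (with the longest permutation matrix of $\G{n_{\ic}}$ on the $\Jb{\ic}$-block), both $h$ and $h^{-1}$ are $J$-block diagonal. Writing $f=\sum_{s=1}^{n-1}E_{s+1,s}$ gives
\[
(h^{-1}fh)_{i,i+1} \;=\; \sum_{s=1}^{n-1}(h^{-1})_{i,s+1}\,h_{s,i+1},
\]
and only those $s$ for which $s+1$ belongs to the block (of the partition \eqref{eq: partition from J}) containing $i$ and $s$ belongs to the block containing $i+1$ can contribute.

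For the case $i\notin J$, a short argument on the location of $i$ and $i+1$ (using that $i+1$ cannot be of the form $\max J_{\ic'}+1$ without forcing $i\in J$) shows that $i$ and $i+1$ lie in distinct blocks. Because each block of \eqref{eq: partition from J} is an interval of consecutive integers, the block of $i$ sits in $\{1,\dots,i\}$ while the block of $i+1$ sits in $\{i+1,\dots,n\}$, so the constraints $s\ge i+1$ and $s+1\le i$ are incompatible and $(h^{-1}fh)_{i,i+1}=0$, giving $q_i(h)=0$. For the case $i\in J$, say $i\in J_{\ic}$, both $i$ and $i+1$ sit in $\Jb{\ic}$, the sum collapses inside this block, and the problem reduces to computing
\[
\big((h(\Jb{\ic}))^{-1}\,f(\Jb{\ic})\,h(\Jb{\ic})\big)_{i',i'+1},
\]
where $i'$ is the position of $i$ within $\Jb{\ic}$, $h(\Jb{\ic})=x(\Jb{\ic})\dot{w}_J(\Jb{\ic})$, and $f(\Jb{\ic})$ is the regular nilpotent in $\G{n_{\ic}}$.

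Here I would invoke the elementary identity $xf=fx$ for any Toeplitz matrix $x$ (immediate from the condition $x_{ij}=a_{i-j}$). This makes $x(\Jb{\ic})$ drop out of the conjugation, so the computation reduces to $\dot{w}_0^{-1}f\dot{w}_0$ inside $\G{n_{\ic}}$, where $\dot{w}_0$ is the longest permutation matrix there. A direct calculation using $\dot{w}_0\,e_j=(-1)^{n_{\ic}-j}e_{n_{\ic}+1-j}$ produces $\dot{w}_0^{-1}f\dot{w}_0=-\sum_{j=1}^{n_{\ic}-1}E_{j,j+1}$, whose $(i',i'+1)$-entry is $-1$, giving $q_i(h)=1$ as required. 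The main obstacle will be the bookkeeping in the $i\notin J$ case, namely verifying that $i$ and $i+1$ always land in different blocks of the partition; once that is secured, both parts reduce to a few lines of matrix manipulation.
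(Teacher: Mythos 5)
Your proof is correct, and it follows the paper's overall skeleton --- reduce everything to the $\Jb{\ic}\times\Jb{\ic}$ diagonal blocks of $x\dot{w}_J$ and split into the cases $i\notin J$ and $i\in J$ --- but the decisive step in the case $i\in J$ is genuinely different. The paper does not use the Toeplitz property at that point: it applies the entry identity $(\dot{w}_0^{-1}g\dot{w}_0)_{i,i+1}=-g_{n+1-i,n-i}$ to $g=(x^{-1}fx)(\Jb{\ic})$ and evaluates the single surviving term of the resulting sum using only that $x(\Jb{\ic})$ is lower unitriangular. You instead invoke the commutation $xf=fx$, so conjugation by $x(\Jb{\ic})$ drops out and the computation reduces to the clean identity $\dot{w}_0^{-1}f\dot{w}_0=-\sum_{j}E_{j,j+1}$ in $\G{n_{\ic}}$ (which I checked is correct with the sign conventions of \eqref{eq: representative of w0}); this buys a slicker finish at the cost of using the Toeplitz hypothesis, which the paper's computation does not need there. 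One small caution: $xf=fx$ is not ``immediate from $x_{ij}=a_{i-j}$'' for an arbitrary Toeplitz matrix --- it fails in the first row and last column when $a_k\neq0$ for $k<0$ --- but it does hold for the paper's Toeplitz matrices because they are lower unitriangular, i.e. $x=I+x_1f+\cdots+x_{n-1}f^{n-1}$ is a polynomial in $f$; that is the justification you should record. Your handling of the case $i\notin J$ (showing $i$ and $i+1$ lie in distinct blocks of the partition \eqref{eq: partition from J}, so no index $s$ can contribute) is a spelled-out version of the paper's one-line appeal to the $J$-lower-triangularity of $(x\dot{w}_J)^{-1}f(x\dot{w}_J)$, and is complete as sketched.
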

\begin{proof}
For a matrix $g\in \G{n}$, one can verify that 
\begin{equation}\label{eq: w0gw0 obs}
 (\dot{w}_0^{-1}g\dot{w}_0)_{i,i+1} = -g_{n+1-i,n-i}
 \quad \text{for $i\in [n-1]$}
\end{equation}
by a direct calculation since $(\dot{w}_0)_{i,j}=\delta_{i,n+1-j}(-1)^{i-1}$ for $i,j\in\dyn$ and $\dot{w}_0^{-1}$ is the transpose of $\dot{w}_0$.
Based on this observation, we now prove the claim.

\vspace{5pt}

Case 1: We first consider the case $i\in J$. In this case, we have $i\in J_{k}$ for some $1\le \ic\le m$.
Then we have $i,i+1\in \Jb{\ic}$ by the definition of $\Jb{\ic}$ (see \eqref{def: definition of J'i} for the definition of $\Jb{\ic}$). 
Set $i'\coloneqq i+1-\min J_{\ic}$ which encodes the position of $i$ in $J_{\ic}$ (as in \eqref{eq: def of j'}).
Then we have $i',i'+1\in [n_{\ic}]$, and hence
\begin{equation*}
\begin{split}
\big(\Ad{(x\dot{w}_J)^{-1}} f\big)_{i,i+1}
&=\Big( \big(\Ad{(x\dot{w}_J)^{-1}}f\big)(\Jb{\ic}) \Big)_{i',i'+1} \\
&=\Big( \left(\dot{w}_J^{-1} x^{-1}fx\dot{w}_J\right)(\Jb{\ic}) \Big)_{i',i'+1} .
\end{split}
\end{equation*}
Since $\dot{w}_J$ and $\dot{w}_J^{-1}$ are $J$-block diagonal matrices, we can decompose the products in the last expression:
\begin{equation*}
\begin{split}
\big(\Ad{(x\dot{w}_J)^{-1}} f\big)_{i,i+1}
&=\Big( \dot{w}_J(\Jb{\ic})^{-1}\cdot(x^{-1}fx)(\Jb{\ic})\cdot\dot{w}_J(\Jb{\ic}) \Big)_{i',i'+1}.
\end{split}
\end{equation*}
Thus, by \eqref{eq: w0gw0 obs}, we obtain 
\begin{equation*}
\begin{split}
\big(\Ad{(x\dot{w}_J)^{-1}} f\big)_{i,i+1}
=&-\left((x^{-1}fx)(\Jb{\ic})\right)_{n_k+1-i',n_k-i'}\\
&=-\sum_{\ell=2}^{n_k}\big(x(\Jb{\ic})^{-1}\big)_{n_k+1-i',\ell}\big(x(\Jb{\ic})\big)_{\ell-1,n_k-i'}
\quad \text{(by \eqref{eq: def of f})}.
\end{split}
\end{equation*}
Since $x(\Jb{\ic})^{-1}$ and $x(\Jb{\ic})$ are lower triangular matrices, the summands in the last expression vanish except for the term $\ell = n_k+1-i'$.
In addition, the term $\ell = n_k+1-i'$ appears in the summation since $i'\in [n_{\ic} -1]$ implies that $n_k+1-i'\ge2$.
Thus, we obtain
\begin{equation*}
\begin{split}
\big(\Ad{(x\dot{w}_J)^{-1}} f\big)_{i,i+1}
&=-\big(x(\Jb{\ic})^{-1}\big)_{n_k+1-i',n_k+1-i'}\big(x(\Jb{\ic})\big)_{n_k-i',n_k-i'}\\
&=-1
\end{split}
\end{equation*}
since $x(\Jb{\ic})^{-1}$ and $x(\Jb{\ic})$ have $1$'s on the diagonal.
Hence we obtain $-\big(\Ad{(x\dot{w}_J)^{-1}} f\big)_{i,i+1}=1$ in this case.\vspace{5pt}

Case 2: We next consider the case $i\notin J$. 
Since the matrix $(x\dot{w}_J)^{-1}f(x\dot{w}_J)$ is $J$-lower triangular, the condition $i\notin J$ implies that we have
\begin{equation*}
\big(\Ad{(x\dot{w}_J)^{-1}} f\big)_{i,i+1}=\big((x\dot{w}_J)^{-1}f(x\dot{w}_J)\big)_{i,i+1} =0
\end{equation*}
in this case. This completes the proof.
\end{proof}

\vspace{10pt}

\begin{lemma}\label{lemma: image of nonnegative part of ZKJ}
Let $K\subseteq J\subseteq \dyn$. Then we have 
$
\Psi(\RS_{K,J;>0})\subseteq \Xp{K,J}.
$
\end{lemma}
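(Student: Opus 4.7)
The plan is to take an arbitrary element of $\RS_{K,J;>0}$, apply the formula defining $\Psi$, and verify that the resulting point in $X(\fan)$ satisfies exactly the conditions characterizing $\Xp{K,J}$ provided by Proposition~\ref{prop: nonnengative stratum}. All the hard work has in fact been done already: the description of $\RS_{K,J;>0}$ pins down the values of the $\Delta_i$'s, while Lemma~\ref{lemm: ad part for xwJB-} pins down the values of the $q_i$'s, and these two sets of data together match the coordinate conditions defining $\Xp{K,J}$.

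More concretely, I would begin by invoking Proposition~\ref{prop: description for R KJ>0}: any element of $\RS_{K,J;>0}$ can be written as $x\dot{w}_J B^-$ for some $J$-Toeplitz matrix $x\in U^-_{\ge 0}$ satisfying $\Delta_i(x\dot{w}_J)=0$ for $i\in K$ and $\Delta_i(x\dot{w}_J)>0$ for $i\notin K$. By the definition of $\Psi$,
\begin{equation*}
\Psi(x\dot{w}_J B^-)=\bigl[\Delta_1(x\dot{w}_J),\ldots,\Delta_{n-1}(x\dot{w}_J);\, q_1(x\dot{w}_J),\ldots,q_{n-1}(x\dot{w}_J)\bigr].
\end{equation*}
The first $n-1$ coordinates therefore already have exactly the vanishing/positivity pattern required in the description of $\Xp{K,J}$ from Proposition~\ref{prop: nonnengative stratum}.

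For the last $n-1$ coordinates, I would apply Lemma~\ref{lemm: ad part for xwJB-}, which gives $q_i(x\dot{w}_J)=1$ for $i\in J$ and $q_i(x\dot{w}_J)=0$ for $i\notin J$. This is precisely the condition on the $y_i$'s in the characterization of $\Xp{K,J}$ given by Proposition~\ref{prop: nonnengative stratum}. Combining these two observations shows $\Psi(x\dot{w}_J B^-)\in \Xp{K,J}$, completing the proof.

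There is no real obstacle at this stage: the statement is essentially a bookkeeping corollary obtained by matching up the coordinate-by-coordinate descriptions of $\RS_{K,J;>0}$ (Proposition~\ref{prop: description for R KJ>0}) and of $\Xp{K,J}$ (Proposition~\ref{prop: nonnengative stratum}) through the explicit formula defining $\Psi$, with Lemma~\ref{lemm: ad part for xwJB-} providing the normalization $q_i(x\dot{w}_J)\in\{0,1\}$ that is needed to land on the normalized representatives used in Proposition~\ref{prop: nonnengative stratum}.
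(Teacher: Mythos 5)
Your proposal is correct and follows essentially the same route as the paper's proof: write the element as $x\dot{w}_JB^-$ via Proposition~\ref{prop: description for R KJ>0}, use Lemma~\ref{lemm: ad part for xwJB-} to get $q_i(x\dot{w}_J)\in\{0,1\}$ according to whether $i\in J$, and match the resulting coordinates against the description of $\Xp{K,J}$ in Proposition~\ref{prop: nonnengative stratum}. No further comment is needed.
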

\begin{proof}
By Proposition~\ref{prop: description for R KJ>0}, an arbitrary element of $\RS_{K,J;>0}$ can be written as $x\dot{w}_JB^-$ for some $J$-Toeplitz matrix $x\in U^-_{\ge0}$ such that
\begin{equation*}
\begin{cases}\Delta_{i}(x\dot{w}_J)=0\quad\text{if}\quad i\in K\\
\Delta_{i}(x\dot{w}_J)>0 \quad\text{if}\quad i\notin K .
\end{cases}
\end{equation*}
Since $x$ is $J$-Toeplitz, we have from Lemma~\ref{lemm: ad part for xwJB-} that
\begin{equation*}
q_i(x\dot{w}_J)
=\begin{cases}
0\quad\text{if $i\notin J$}\\
1\hspace{4mm}\text{if $i\in J$} .
\end{cases}
\end{equation*}
Thus, by Proposition~\ref{prop: nonnengative stratum}, $\Psi(x\dot{w}_JB^-)$ is an element of $\Xp{K,J}$.
\end{proof}

\vspace{10pt}

Now the following claim follows from Lemma~\ref{lemma: image of nonnegative part of ZKJ} and the decompositions \eqref{eq: two decompositions}.

\begin{proposition}\label{prop: resctriction to nonnegative}
The morphism $\Psi \colon Y\to X(\fan)$ restricts to a continuous map
\begin{equation*}
\Psi_{\ge 0} \colon Y_{\ge 0}\to X(\fan)_{\ge 0}
\end{equation*}
which sends $\RS_{K,J;>0}$ to $\Xp{K,J}$ for $K\subseteq J \subseteq \dyn$.
\end{proposition}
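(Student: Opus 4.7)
My plan is to observe that this proposition is essentially an assembly of results already established in the section, combined with general continuity. The two ingredients are the disjoint decompositions
\begin{equation*}
Y_{\ge 0}=\bigsqcup_{K\subseteq J\subseteq \dyn}\RS_{K,J;>0}
\qquad \text{and} \qquad
X(\fan)_{\ge0} = \bigsqcup_{K\subseteq J \subseteq \dyn} \Xp{K,J}
\end{equation*}
coming from \eqref{decomposition of nonnegative part of Peterson} and \eqref{eq: decomp of X nonnegative KJ}, together with Lemma~\ref{lemma: image of nonnegative part of ZKJ}, which has already done the real stratum-by-stratum work.

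First I would pick an arbitrary point $gB^- \in Y_{\ge 0}$. By the first decomposition above, there exist (unique) subsets $K \subseteq J \subseteq \dyn$ with $gB^- \in \RS_{K,J;>0}$. Lemma~\ref{lemma: image of nonnegative part of ZKJ} then gives $\Psi(gB^-) \in \Xp{K,J}$, and by the second decomposition above, $\Xp{K,J} \subseteq X(\fan)_{\ge 0}$. Hence $\Psi(Y_{\ge 0}) \subseteq X(\fan)_{\ge 0}$, so $\Psi$ restricts to a well-defined map $\Psi_{\ge 0}\colon Y_{\ge 0}\to X(\fan)_{\ge 0}$. Since $\Psi\colon Y\to X(\fan)$ is a morphism of algebraic varieties, it is continuous in the classical topology, and hence so is the restriction $\Psi_{\ge 0}$ to the subspace $Y_{\ge 0}$, where the codomain $X(\fan)_{\ge 0}$ carries the subspace topology from $X(\fan)$.

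The second assertion, that $\Psi_{\ge 0}$ sends $\RS_{K,J;>0}$ into $\Xp{K,J}$ for each pair $K\subseteq J\subseteq \dyn$, is literally Lemma~\ref{lemma: image of nonnegative part of ZKJ} once we view $\Psi$ as $\Psi_{\ge 0}$ on $\RS_{K,J;>0}\subseteq Y_{\ge 0}$. There is no real obstacle here; all the nontrivial content, namely the computation of $\Delta_i(x\dot w_J)$ for $J$-Toeplitz $x\in U^-_{\ge 0}$ (Proposition~\ref{prop: description for R KJ>0}) and of $q_i(x\dot w_J)$ (Lemma~\ref{lemm: ad part for xwJB-}), has already been carried out in preparing Lemma~\ref{lemma: image of nonnegative part of ZKJ}. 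Thus the proof reduces to the bookkeeping described above.
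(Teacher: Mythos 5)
Your proposal is correct and matches the paper's own argument: the paper likewise deduces this proposition directly from Lemma~\ref{lemma: image of nonnegative part of ZKJ} together with the two decompositions \eqref{decomposition of nonnegative part of Peterson} and \eqref{eq: decomp of X nonnegative KJ}, with continuity coming for free from $\Psi$ being a morphism. Your write-up just makes the bookkeeping explicit, so there is nothing to add.
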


\vspace{10pt}

\subsection{Properties of $\Psi_{\ge 0}$}
In this subsection, we prove that $\Psi_{\ge 0} \colon Y_{\ge 0}\to X(\fan)_{\ge 0}$ is a homeomorphism. For that purpose, we apply the following result due to Rietsch (\cite{ri03}). 
For $i\in\dyn$, recall that $\Delta_i(g)$ is the lower right $(n-i)\times (n-i)$ minor of $g\in\G{n}$ (see \eqref{eq:4.3 second}).
By a direct calculation, one can verify that $\Delta_i(g\dot{w}_0)$ is the lower \textit{left} $(n-i)\times (n-i)$ minor of $g$. 

\begin{proposition}[{\cite[Proposition~1.2]{ri03}}]\label{prop: Rietsch's result}
Let $X_{\ge0}$ be the set of totally nonnegative Toeplitz matrices in $U^-$. Then, the map 
\begin{equation*}
X_{\ge0}\rightarrow \R^{n-1}_{\ge0}
\quad ; \quad
x\mapsto (\Delta_1(x\dot{w}_0),\ldots,\Delta_{n-1}(x\dot{w}_0))
\end{equation*}
is a homeomorphism, where each $\Delta_i(x\dot{w}_0)$ is the lower left $(n-i)\times (n-i)$ minor of $x$.
\end{proposition}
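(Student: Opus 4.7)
The plan is to show that $\Phi \colon X_{\ge 0} \to \R^{n-1}_{\ge 0}$ defined by $\Phi(x) = (\Delta_1(x\dot{w}_0), \ldots, \Delta_{n-1}(x\dot{w}_0))$ is a homeomorphism. Continuity of $\Phi$ is immediate since each coordinate is polynomial in the Toeplitz entries of $x$, and the image lies in $\R^{n-1}_{\ge 0}$ because each $\Delta_i(x\dot{w}_0)$ is a minor of the TNN matrix $x$. The nontrivial work is to establish bijectivity together with a continuous inverse.

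A useful first step is the combinatorial identification $\Delta_k(x\dot{w}_0) = s_{(k^{n-k})}$, the rectangular Schur polynomial with $n-k$ parts each equal to $k$, evaluated via the Jacobi--Trudi formula under the substitution $h_i := x_i$ (with the convention $x_0 = 1$ and $x_i = 0$ for $i<0$). This follows directly from writing the lower-left $(n-k)\times(n-k)$ minor as $\det(x_{k+a-b})_{a,b \in [n-k]}$ and comparing with the transposed Jacobi--Trudi expression $s_\lambda = \det(h_{\lambda_i + j - i})$. This algebraic model makes familiar Schur positivity identities available.

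Bijectivity would then be proved by induction on $n$. The identity $\Delta_{n-1}(x\dot{w}_0) = x_{n-1}$ recovers the bottom-left entry of $x$ directly. A Sylvester-type cofactor expansion expresses the remaining minors $\Delta_k(x\dot{w}_0)$ for $k < n-1$ in terms of minors of the upper-left $(n-1)\times(n-1)$ principal Toeplitz submatrix (itself TNN and Toeplitz of smaller size) together with correction terms involving the now-known $x_{n-1}$. The inductive hypothesis then recovers $x_1, \ldots, x_{n-2}$ uniquely, and surjectivity is obtained by reversing this procedure while verifying that the TNN property is maintained at each step. Properness of $\Phi$ follows because the leading monomial $h_k^{n-k}$ of $s_{(k^{n-k})}$ forces at least one $\Delta_k(x\dot{w}_0)$ to diverge whenever $\|x\|\to\infty$ within $X_{\ge 0}$; a continuous proper bijection between locally compact Hausdorff spaces is then a homeomorphism.

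The main obstacle is the uniqueness claim inside the inductive step: after $x_{n-1}$ is fixed, one must solve a polynomial system of $n-2$ equations in $n-2$ unknowns that is not obviously triangular and that generically has many complex roots. Singling out the unique TNN solution requires exploiting the Schur positivity of the $s_{(k^{n-k})}$ together with the defining TNN inequalities of $X_{\ge 0}$, and this is where the bulk of the combinatorial work resides.
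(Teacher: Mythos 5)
This statement is not proved in the paper at all: it is imported verbatim from Rietsch \cite[Proposition~1.2]{ri03} and used as a black box, so there is no in-paper argument for you to match; the relevant comparison is with Rietsch's original proof, which is genuinely deep (it goes through the quantum cohomology of partial flag varieties and Peterson's work, rather than direct minor manipulations). Your preliminary observations are fine: continuity, nonnegativity of the image, the computation $\Delta_{n-1}(x\dot{w}_0)=x_{n-1}$, and the Jacobi--Trudi identification $\Delta_k(x\dot{w}_0)=\det\bigl(x_{k+a-b}\bigr)_{a,b\in[n-k]}=s_{(k^{n-k})}\big|_{h_i\mapsto x_i}$ are all correct (and consistent with the paper's $n=3$ example, $\Delta_1=a^2-b$).

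However, the proposal has a genuine gap, and it sits exactly at the heart of the statement: bijectivity. Your inductive scheme is only sketched, and the sketch does not obviously work. After extracting $x_{n-1}$, the remaining minors of the $n\times n$ matrix are the rectangular Schur polynomials $s_{(k^{n-k})}$, not the minors $s_{(k^{n-1-k})}$ of the truncated $(n-1)\times(n-1)$ Toeplitz matrix plus a triangular correction in $x_{n-1}$; no identity of ``Sylvester type'' is exhibited that reduces the size-$n$ system to the size-$(n-1)$ statement, and generically the polynomial system $s_{(k^{n-k})}(x)=c_k$ has many complex solutions, so singling out a unique totally nonnegative one \emph{is} the theorem --- you acknowledge this yourself in the last paragraph, which means injectivity and surjectivity (including the verification that the reconstructed $x$ is totally nonnegative, not merely a Toeplitz matrix with nonnegative lower-left minors) are left unproved. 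The properness argument is also only heuristic: since the Jacobi--Trudi determinants have mixed signs as polynomials in the $x_i$ (e.g.\ $\Delta_1=x_1^2-x_2$ for $n=3$), divergence of $\|x\|$ does not by itself force some $\Delta_k$ to diverge; one must use the total nonnegativity inequalities (such as $x_2\le x_1^2$) to bound the entries by the minors, and this is not carried out. To actually close the argument one needs substantially more input --- either Rietsch's route via quantum cohomology and Peterson's description, or the classical Aissen--Schoenberg--Whitney/Edrei theory of totally nonnegative sequences and their generating functions --- neither of which appears in the proposal.
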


\vspace{10pt}

We apply this result as a key fact to show that $\Psi_{\ge 0} \colon Y_{\ge 0}\to X(\fan)_{\ge 0}$ is a homeomorphism.

\begin{proposition}\label{lemm: injection on ZKJ>0}
For $K\subseteq J\subseteq \dyn$, the map \begin{equation*}\Psi_{\ge 0} \colon \RS_{K,J;>0}\to \Xp{K,J}\end{equation*} is injective.
\end{proposition}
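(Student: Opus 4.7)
The strategy is to compute $\Psi_{\ge 0}$ explicitly on $\RS_{K,J;>0}$ and reduce injectivity to Rietsch's parametrization (Proposition~\ref{prop: Rietsch's result}), applied block-by-block via the product structure of $J$-Toeplitz matrices. By Proposition~\ref{prop: description for R KJ>0}, every element of $\RS_{K,J;>0}$ can be written $x\dot{w}_JB^-$ with $x\in U^-_{\ge 0}$ a $J$-Toeplitz matrix whose minors $\Delta_i(x\dot{w}_J)$ vanish precisely for $i\in K$; combined with Lemma~\ref{lemm: ad part for xwJB-}, this gives
\begin{equation*}
 \Psi_{\ge 0}(x\dot{w}_JB^-)=[\Delta_1(x\dot{w}_J),\ldots,\Delta_{\yn}(x\dot{w}_J);\,q_1,\ldots,q_{\yn}],
\end{equation*}
where $q_i=1$ for $i\in J$ and $q_i=0$ for $i\notin J$.

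Suppose $\Psi_{\ge 0}(x\dot{w}_JB^-)=\Psi_{\ge 0}(x'\dot{w}_JB^-)$, realized by some $t\in T$ acting on the representative tuples. Since both tuples are real and nonnegative, the scalars $\varpi_i(t)$ for $i\notin K$ and $\alpha_i(t)$ for $i\in J$ must be positive reals, and a standard modulus/phase argument (replacing each $t_j$ by $|t_j|$) allows us to assume $t\in T_{>0}$. Matching the $q$-parts then forces $\alpha_i(t)=1$ for all $i\in J$, so $t_j$ is constant along each connected block $\Jb{\ic}$ of the partition \eqref{eq: partition from J} of $[n]$. A crucial intermediate step is to observe that $\Delta_i(x\dot{w}_J)=1$ whenever $i\notin J$: indeed, $x\dot{w}_J$ is $J$-block diagonal, each $\Jb{\ic}$-diagonal block has determinant $1$, each singleton diagonal block equals $1$, and for $i\notin J$ the cut between positions $i$ and $i+1$ separates blocks cleanly, so the lower right $(n-i)\times(n-i)$ minor factors as a product of block determinants.

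Matching the $\Delta$-parts for $i\notin J$ now forces $\varpi_i(t)=1$ for all $i\notin J$, and a short telescoping argument yields $t=\mathrm{id}$. Writing $A_1<A_2<\cdots<A_N$ for the ordered blocks in \eqref{eq: partition from J}, the partial product $S_\nu\coloneqq t_1\cdots t_{\max A_\nu}$ equals $\varpi_{\max A_\nu}(t)=1$ for each $\nu<N$ (since $\max A_\nu\in\dyn\setminus J$), and $S_N=\det t=1$, so $\prod_{j\in A_\nu}t_j=1$ on every block; combined with $t\in T_{>0}$ and the constancy of $t_j$ on each $\Jb{\ic}$-block, this forces $t_j=1$ for every $j$. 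Hence $\Delta_i(x\dot{w}_J)=\Delta_i(x'\dot{w}_J)$ for every $i\in\dyn$.

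For each connected component $J_{\ic}$, the blocks $x(\Jb{\ic})$ and $x'(\Jb{\ic})$ are totally nonnegative Toeplitz matrices in $\G{n_{\ic}}$ by Lemma~\ref{lemm: if and only if lemma which is needed}, and their lower-left minors all coincide; Rietsch's Proposition~\ref{prop: Rietsch's result} applied inside each $\G{n_{\ic}}$ then yields $x(\Jb{\ic})=x'(\Jb{\ic})$. The singleton diagonal blocks equal $1$ on both sides, so $x=x'$ and therefore $x\dot{w}_JB^-=x'\dot{w}_JB^-$. The main technical obstacle lies in the middle step, namely identifying $\Delta_i(x\dot{w}_J)=1$ for $i\notin J$ so that the $\Delta$-comparison actually constrains $t$ off $J$, and then running the combinatorial argument to rule out any nontrivial $t\in T_{>0}$ preserving the $q$-normalization.
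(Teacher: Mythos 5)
Your proposal is correct and takes essentially the same route as the paper: reduce via Proposition~\ref{prop: description for R KJ>0} and Lemma~\ref{lemm: ad part for xwJB-} to comparing the tuples $(\Delta_i(x\dot{w}_J))$ and $(\Delta_i(x'\dot{w}_J))$ up to a torus element $t$, use $\Delta_i(x\dot{w}_J)=1$ for $i\notin J$ together with $\alpha_i(t)=1$ for $i\in J$ to force the $\Delta$-coordinates to agree, and then conclude $x=x'$ blockwise from Rietsch's parametrization (Proposition~\ref{prop: Rietsch's result}) via Lemma~\ref{lemm: if and only if lemma which is needed}. The only (harmless) variation is that you normalize $t$ to $T_{>0}$ by taking moduli and show $t=\mathrm{id}$ by telescoping, whereas the paper keeps $t\in T$ complex, deduces $|\varpi_i(t)|=1$, and treats the blocks $J_{\ic}\subseteq K$ separately; both arguments are valid and yield the same equality of minors before the final blockwise step.
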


\begin{proof}
Take $p,p'\in\RS_{K,J;>0}$ such that $\Psi_{\ge 0}(p)=\Psi_{\ge 0}(p').$
By Proposition~\ref{prop: description for R KJ>0}, we may write $$\text{$p=x\dot{w}_JB^-$, $p'=x'\dot{w}_JB^-$ for some $J$-Toeplitz matrices 
$x,x'\in U^-_{\ge0}$}$$
such that
\begin{equation}\label{eq: with positive properties}
\begin{cases}
\Delta_{i}(x\dot{w}_J)=0\quad\text{if}\quad i\in K\\
\Delta_{i}(x\dot{w}_J)>0 \quad\text{if}\quad i\notin K
\end{cases}
\quad \text{and} \quad
\begin{cases}
\Delta_{i}(x'\dot{w}_J)=0\quad\text{if}\quad i\in K\\
\Delta_{i}(x'\dot{w}_J)>0 \quad\text{if}\quad i\notin K .
\end{cases}
\end{equation}
To prove that $p=p'$, we show that $x=x'$ in what follows.

The equality $\Psi_{\ge 0}(p)=\Psi_{\ge 0}(p')$ in $X(\fan)=(\C^{\ddyn}- \EL)/T$ implies that there exists $t\in T$ such that 
\begin{equation}\label{eq: delta and ad for x and x'}
\begin{split}
&\varpi_i(t)\cdot \Delta_{i}(x\dot{w}_J)=\Delta_{i}(x'\dot{w}_J), \\
&\alpha_i(t)\cdot q_i(x\dot{w}_J) =q_i(x'\dot{w})
\end{split}
\end{equation}
for $1\le i\le n-1$. 
Since $x,x'$ are $J$-Toeplitz matrices, to prove that $x=x'$, it suffices to show that
\begin{equation}\label{eq: x=x' on Jk}
x(\Jb{\ic})=x'(\Jb{\ic})
\qquad (1\le \ic\le m).
\end{equation}
Let us prove this in the rest of the proof.

By Lemma~\ref{lemm: ad part for xwJB-} and (\ref{eq: delta and ad for x and x'}), we have $\alpha_i(t)=1$ for $i\in J_{\ic}$, so the $\Jb{\ic}\times \Jb{\ic}$ diagonal block $t(\Jb{\ic})$ of the matrix $t$ takes of the form
\begin{equation*}
t(\Jb{\ic})=
\text{diag}(\lambda_k,\lambda_k,\ldots,\lambda_k)
\end{equation*}
for some $\lambda_k\in\C^{\times}$. Since $x$ and $x'$ are $J$-Toeplitz matrices, we have $\Delta_{n-i}(x\dot{w}_J)=\Delta_{n-i}(x'\dot{w}_J)=1$ for $i\notin J$. Applying this to the first equality in \eqref{eq: delta and ad for x and x'}, we obtain
\begin{equation}\label{eq: pi=1 on not J}
\varpi_i(t)=1
\qquad (i\notin J).
\end{equation}
Thus, we may prove inductively that
\begin{equation*}
\det(t(\Jb{\ic}))=(\lambda_k)^{n_k}=1
\qquad (1\le \ic\le m).
\end{equation*}
In particular, we obtain that $|\lambda_k|=1$ for $1\le \ic\le m$.
This and \eqref{eq: pi=1 on not J} imply that
\begin{equation}\label{eq: abs pi}
|\varpi_i(t)|=1
\qquad (i\in\dyn).
\end{equation}
We clam that
\begin{equation}\label{eq: injectivity 10'}
\text{$\Delta_{i}(x\dot{w}_J)=\Delta_{i}(x'\dot{w}_J)$}
\qquad (i\in J_{\ic}).
\end{equation}
To see this, we take cases.
If $J_{\ic}\subseteq K$, then the claim follows from \eqref{eq: with positive properties} since both sides of this equality are zero.
If $J_{\ic}\not\subseteq K$, then by \eqref{eq: with positive properties}, we have 
\begin{equation}\label{eq: JcapK and J-K}
\begin{split}
&\Delta_{i}(x\dot{w}_J)=\Delta_{i}(x'\dot{w}_J)=0 \quad (i\in J_{\ic}\cap K), \\
&\text{$\Delta_{i}(x\dot{w}_J)>0$ and $\Delta_{i}(x'\dot{w}_J)>0$} \quad (i\in J_{\ic}- K).
\end{split}
\end{equation}
Applying the latter claim to \eqref{eq: delta and ad for x and x'}, we see that $\varpi_i(t)>0$ for $i\in J_{\ic}- K$. Now \eqref{eq: abs pi} implies that $\varpi_i(t)=1$ for $i\in J_{\ic}- K$. Applying this to \eqref{eq: delta and ad for x and x'} again, we obtain that 
\begin{equation*}
\text{$\Delta_{i}(x\dot{w}_J)=\Delta_{i}(x'\dot{w}_J)$}\quad (i\in J_{\ic}- K).
\end{equation*}
Combining this with the former equality in \eqref{eq: JcapK and J-K}, we conclude \eqref{eq: injectivity 10'} as claimed above. 

We now prove \eqref{eq: x=x' on Jk} by using \eqref{eq: injectivity 10'}.
For each $i\in J_k$, we set $i'\coloneqq i+1-\min J_{\ic}$ which encodes the position of $i$ in $J_{\ic}$ (as in \eqref{eq: def of j'}).
Since $x$ and $x'$ in \eqref{eq: injectivity 10'} are $J$-Toeplitz matrices, it follows that
\begin{equation*}
\Delta_{i'}^{(n_k)}\big(x(\Jb{\ic})\dot{w}_J(\Jb{\ic})\big)=\Delta_{i'}^{(n_k)}\big(x'(\Jb{\ic})\dot{w}_J(\Jb{\ic})\big)
\qquad (i'\in [n_k]),
\end{equation*}
where $\Delta_{i'}^{(n_k)}\colon \G{n_{\ic}}\rightarrow \C$ is the function defined in \eqref{eq:4.3 first} for $\G{n_{\ic}}$, and $\dot{w}_J(\Jb{\ic})$ is the longest permutation in $\G{n_{\ic}}$.
In this equality, we have
\begin{equation*}\label{eq: nonnegative on Jk}
x(\Jb{\ic}), x'(\Jb{\ic})\in (U^-_{n_k})_{\ge 0}
\qquad (1\le \ic\le m)
\end{equation*}
which follows from $x,x'\in U^-_{\ge0}$ and Lemma~\ref{lemm: if and only if lemma which is needed}.
Now, Rietsch's result (Proposition~\ref{prop: Rietsch's result}) implies that we have $x(\Jb{\ic})=x'(\Jb{\ic})$, as claimed in \eqref{eq: x=x' on Jk}. This completes the proof.
\end{proof} 

\vspace{10pt}

\begin{corollary}\label{prop: injection of psi ge 0}
The map $\Psi_{\ge 0} \colon Y_{\ge 0}\to X(\Sigma)_{\ge 0}$ is injective.
\end{corollary}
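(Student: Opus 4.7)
The plan is to bootstrap the stratum-wise injectivity of $\Psi_{\ge0}$ (just established in Proposition~\ref{lemm: injection on ZKJ>0}) to a global injectivity statement, using the fact that both source and target decompose compatibly under $\Psi_{\ge0}$. Concretely, I would argue as follows.

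Let $p,p'\in Y_{\ge 0}$ with $\Psi_{\ge 0}(p)=\Psi_{\ge 0}(p')$. By the decomposition \eqref{decomposition of nonnegative part of Peterson}, there exist unique pairs $K\subseteq J\subseteq \dyn$ and $K'\subseteq J'\subseteq \dyn$ with $p\in \RS_{K,J;>0}$ and $p'\in \RS_{K',J';>0}$. By Proposition~\ref{prop: resctriction to nonnegative}, $\Psi_{\ge 0}$ carries $\RS_{K,J;>0}$ into $\Xp{K,J}$ and $\RS_{K',J';>0}$ into $\Xp{K',J'}$. Therefore the common image $\Psi_{\ge 0}(p)=\Psi_{\ge 0}(p')$ lies in $\Xp{K,J}\cap \Xp{K',J'}$. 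Since the decomposition \eqref{eq: decomp of X nonnegative KJ} of $X(\fan)_{\ge 0}$ is disjoint, this forces $(K,J)=(K',J')$, so $p$ and $p'$ belong to the same stratum $\RS_{K,J;>0}$.

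Now the injectivity of the stratum-wise restriction $\Psi_{\ge 0}\colon \RS_{K,J;>0}\to \Xp{K,J}$ established in Proposition~\ref{lemm: injection on ZKJ>0} immediately gives $p=p'$, completing the proof. There is no real obstacle here; the entire content of the corollary is the combination of (i) the compatibility of $\Psi_{\ge 0}$ with the two stratifications and (ii) the fact that already proved fiberwise injectivity is enough because the image pieces $\Xp{K,J}$ are pairwise disjoint.
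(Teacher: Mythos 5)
Your proposal is correct and is essentially identical to the paper's proof: the paper likewise observes that $\Psi_{\ge0}(\RS_{K,J;>0})\cap\Psi_{\ge0}(\RS_{K',J';>0})=\emptyset$ for $(K,J)\ne(K',J')$, since distinct strata land in the disjoint pieces $\Xp{K,J}$, and then invokes the stratum-wise injectivity of Proposition~\ref{lemm: injection on ZKJ>0}. No issues to report.
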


\begin{proof}
Lemma~\ref{lemma: image of nonnegative part of ZKJ} and the decompositions~\eqref{eq: two decompositions} imply that
\begin{equation*}
\Psi_{\ge 0}(\RS_{K,J;>0})\cap \Psi_{\ge 0}(\RS_{K',J';>0})=\emptyset
\quad \text{for $(K,J)\ne(K',J')$}.
\end{equation*} 
Thus, the claim follows from the previous proposition.
\end{proof}

\vspace{3mm}
To prove that $\Psi_{\ge 0} \colon Y_{\ge 0}\to X(\fan)_{\ge 0}$ is surjective, we first establish its surjectivity on the largest stratum $\Xp{\emptyset,\dyn}$.

\begin{proposition}\label{lemm: surjective from Z empty I ge 0 to X sigma I empty I ge 0}
The map $\Psi_{\ge 0} \colon \RS_{\emptyset,\dyn;>0}\to \Xp{\emptyset, \dyn}$
is surjective.
\end{proposition}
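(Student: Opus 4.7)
The plan is to produce a preimage for every point of $\Xp{\emptyset,\dyn}$ directly, using Rietsch's parametrization of totally nonnegative Toeplitz matrices as the sole nontrivial input.

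First I would fix an arbitrary element of $\Xp{\emptyset,\dyn}$. By Proposition~\ref{prop: nonnengative stratum} it can be written as $[a_1,\ldots,a_{n-1};1,\ldots,1]$ for some $a_1,\ldots,a_{n-1}\in\R_{>0}$. Note that for $J=\dyn$ we have $w_J=w_0$, so the ``$J$-Toeplitz'' condition coincides with the ordinary Toeplitz condition; moreover any totally nonnegative Toeplitz matrix automatically lies in $U^-_{\ge 0}$ by Whitney's characterization \eqref{eq: type A nonnegativity}. Thus producing the desired preimage reduces to finding a totally nonnegative Toeplitz matrix $x\in U^-$ with $\Delta_i(x\dot{w}_0)=a_i$ for every $i\in\dyn$.

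Next I would invoke Rietsch's theorem (Proposition~\ref{prop: Rietsch's result}), which asserts exactly that $x\mapsto(\Delta_1(x\dot{w}_0),\ldots,\Delta_{n-1}(x\dot{w}_0))$ is a homeomorphism $X_{\ge 0}\stackrel{\approx}{\to}\R^{n-1}_{\ge 0}$; in particular it is surjective, so the required $x$ exists. Since all $a_i$ are strictly positive, Proposition~\ref{prop: description for R KJ>0} (with $K=\emptyset$, $J=\dyn$) then yields $x\dot{w}_0 B^-\in \RS_{\emptyset,\dyn;>0}$.

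Finally, I would verify that $\Psi_{\ge 0}$ sends this element to the prescribed point. The $\Delta_i$-coordinates of $\Psi(x\dot{w}_0 B^-)$ are $a_i$ by construction, and Lemma~\ref{lemm: ad part for xwJB-} applied with $J=\dyn$ (so that every index $i$ lies in $J$) gives $q_i(x\dot{w}_0)=1$ for all $i\in\dyn$. Hence
\begin{equation*}
\Psi_{\ge 0}(x\dot{w}_0 B^-)=[a_1,\ldots,a_{n-1};1,\ldots,1],
\end{equation*}
which is the chosen point of $\Xp{\emptyset,\dyn}$. The only real obstacle is the use of Rietsch's parametrization of totally nonnegative Toeplitz matrices; once that is granted, the rest is bookkeeping built from Proposition~\ref{prop: description for R KJ>0} and Lemma~\ref{lemm: ad part for xwJB-}.
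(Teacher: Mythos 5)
Your proof is correct and takes essentially the same route as the paper: both produce the Toeplitz matrix from Rietsch's parametrization (Proposition~\ref{prop: Rietsch's result}) and use Lemma~\ref{lemm: ad part for xwJB-} to see that the $q_i$-coordinates are all $1$, so the image is the prescribed point of $\Xp{\emptyset,\dyn}$. The only difference is the final membership check: the paper verifies $x\dot{w}_0B^-\in(\G{n}/B^-)_{\ge 0}$ by hand via the limit of $y(t)x\dot{w}_0B^-$ as $t\to+0$, whereas you cite the already-established characterization Proposition~\ref{prop: description for R KJ>0} with $K=\emptyset$, $J=\dyn$ (after placing $x$ in $U^-_{\ge0}$ via Whitney's theorem, a step the paper also uses implicitly), which is legitimate and non-circular.
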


\begin{proof}
By Proposition~\ref{prop: nonnengative stratum} we have 
\begin{equation*}
\Xp{\emptyset,\dyn}= 
\left\{[x_1,\dots, x_{n-1}; 1,\dots, 1]\in X(\fan) \mid x_i>0\ (i\in \dyn)\right\}.
\end{equation*}
Take an arbitrary element 
\begin{equation*}
p=[x_1,\dots, x_{n-1};1,\dots,1]\in \Xp{\emptyset,\dyn}
\end{equation*}
with $x_i>0$ for $i\in \dyn$. Then by Rietsch's result (Proposition~\ref{prop: Rietsch's result}), there exists a Toeplitz matrix $x\in U^-_{\ge 0}$ such that
\begin{equation*}
(\Delta_{1}(x\dot{w}_0),\ldots,\Delta_{n-1}(x\dot{w}_0))
=(x_1,\dots, x_{n-1}).
\end{equation*}
This and Lemma~\ref{lemm: ad part for xwJB-} imply that we have
$\Psi(x\dot{w}_0 B^-)=[x_1,\dots, x_{n-1};1,\dots,1]=p$.
Therefore, it suffices to show that 
\begin{equation}\label{eq: surj goal}
x\dot{w}_0B^-\in\RS_{\emptyset, \dyn;>0}.
\end{equation}
We prove this in what follows.
Since $x$ is a Toeplitz matrix and we have $x_i\ne0$ for all $i\in \dyn$, Corollary~\ref{coro: a description for ZKJ} implies that
\begin{equation}\label{eq: xw0B^-in Zemptyset I}
x\dot{w}_0B^-\in\RS_{\emptyset, \dyn}.\end{equation}
For $t>0$, let $y(t)\coloneqq\exp(tf)$, where $f$ is the regular nilpotent matrix defined in \eqref{eq: def of f}.
Since we have $x\in U^-_{\ge0}$ and $y(t)\in U^-_{>0}$ (\cite[Proposition 5.9]{lu94}), we have
\begin{equation*}
y(t)x\in U^-_{>0}\quad \text{for $t>0$}
\end{equation*}
by \cite[Sect.\ 2.12]{lu94}. This means that $y(t)xB^+\in (\G{n}/B^+)_{>0}$ for $t>0$, and hence
\begin{equation*}
y(t)x\dot{w}_0B^-\in (\G{n}/B^-)_{>0}\quad \text{for $t>0$}
\end{equation*} 
(see the proof of Lemma~\ref{lem: yw0=u+b}).
Taking $t\rightarrow +0$, we obtain that $x\dot{w}_0B^-\in (\G{n}/B^-)_{\ge 0}$.
Together with (\ref{eq: xw0B^-in Zemptyset I}), we now have 
\begin{equation*}
x\dot{w}_0B^-\in\RS_{\emptyset, \dyn}\cap(\G{n}/B^-)_{\ge 0}=\RS_{\emptyset, \dyn;>0},
\end{equation*}
as claimed in \eqref{eq: surj goal}. Hence, the claim follows.
\end{proof}

\vspace{3mm}
\begin{corollary}\label{prop: surjection of psi ge 0}
The map
$\Psi_{\ge 0} \colon Y_{\ge 0}\to X(\fan)_{\ge 0}$ is surjective.
\end{corollary}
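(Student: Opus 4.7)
The plan is to combine surjectivity on the open stratum with a compactness-and-density argument. By Proposition~\ref{lemm: surjective from Z empty I ge 0 to X sigma I empty I ge 0}, the image of $\Psi_{\ge0}$ already contains the largest stratum $\Xp{\emptyset,\dyn}$, and by Lemma~\ref{lem: closure} we have $\overline{\Xp{\emptyset,\dyn}} = X(\fan)_{\ge0}$. So it suffices to show that $\Psi_{\ge0}(Y_{\ge0})$ is closed in $X(\fan)_{\ge0}$; together with density this forces surjectivity.

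First I would observe that $Y_{\ge0}$ is compact. Indeed, $(\G{n}/B^-)_{\ge0}$ is closed in $\G{n}/B^-$ by definition (it is the closure of $(\G{n}/B^-)_{>0}$), and $Y$ is closed in $\G{n}/B^-$ (it is a Zariski-closed subvariety). Hence $Y_{\ge0} = Y \cap (\G{n}/B^-)_{\ge0}$ is closed in the compact projective variety $\G{n}/B^-$, so it is itself compact in the classical topology. Since $\Psi_{\ge0}$ is continuous (Proposition~\ref{prop: resctriction to nonnegative}) and $X(\fan)_{\ge0}$ is Hausdorff (it embeds in $X(\fan)\subseteq \G{n}/B^-$ via standard toric facts, or equivalently homeomorphs onto the polytope $P_{n-1}$), the image $\Psi_{\ge0}(Y_{\ge0})$ is compact, hence closed in $X(\fan)_{\ge0}$.

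To finish, take any $p\in X(\fan)_{\ge0}$. By Lemma~\ref{lem: closure} there exists a sequence $p_k\in \Xp{\emptyset,\dyn}$ with $p_k\to p$. By Proposition~\ref{lemm: surjective from Z empty I ge 0 to X sigma I empty I ge 0}, we can choose $q_k\in \RS_{\emptyset,\dyn;>0}\subseteq Y_{\ge0}$ with $\Psi_{\ge0}(q_k)=p_k$. By compactness of $Y_{\ge0}$, some subsequence $q_{k_j}$ converges to a point $q\in Y_{\ge0}$, and then continuity of $\Psi_{\ge0}$ gives
\begin{equation*}
\Psi_{\ge0}(q) \;=\; \lim_{j\to\infty}\Psi_{\ge0}(q_{k_j}) \;=\; \lim_{j\to\infty} p_{k_j} \;=\; p,
\end{equation*}
establishing surjectivity.

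The only step with any subtlety is the compactness of $Y_{\ge0}$, but this is immediate once one recalls that $(\G{n}/B^-)_{\ge0}$ is by definition a closed subset. The rest is a routine compactness plus density argument, with the real content having already been done in the proof of the stratum-wise surjectivity (Proposition~\ref{lemm: surjective from Z empty I ge 0 to X sigma I empty I ge 0}) via Rietsch's parametrization of totally nonnegative Toeplitz matrices.
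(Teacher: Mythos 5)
Your proposal is correct and follows essentially the same route as the paper: surjectivity onto the big stratum $\Xp{\emptyset,\dyn}$ (Proposition~\ref{lemm: surjective from Z empty I ge 0 to X sigma I empty I ge 0}), density of that stratum in $X(\fan)_{\ge0}$ (Lemma~\ref{lem: closure}), and closedness of the image via compactness of $Y_{\ge0}$ and Hausdorffness of $X(\fan)_{\ge0}$. Your sequence/subsequence argument is just an explicit unwinding of the paper's one-line "closed image contains a dense subset" conclusion, and your justification of the compactness of $Y_{\ge0}$ is a correct filling-in of a detail the paper takes for granted.
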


\begin{proof}
By Proposition~\ref{lemm: surjective from Z empty I ge 0 to X sigma I empty I ge 0}, we have $\Psi_{\ge 0}(\RS_{\emptyset,\dyn;>0})=\Xp{\emptyset, \dyn}$ for the largest stratum.
Taking the closure in $X(\fan)_{\ge0}$, we obtain that
\begin{equation}\label{eq: closure of nonnegative part of X sigma I empty II}
\overline{\Psi_{\ge 0}(\RS_{\emptyset,\dyn;>0})}
=\overline{\Xp{\emptyset, \dyn}}
=X(\fan)_{\ge0},
\end{equation}
where the last equality follows from Lemma~\ref{lem: closure}.
By definition, we have $\Psi_{\ge 0}(\RS_{\emptyset,\dyn;>0})\subseteq \Psi_{\ge 0}(Y_{\ge 0})$ which implies that
\begin{equation*}
\overline{\Psi_{\ge 0}(\RS_{\emptyset,\dyn;>0})} \subseteq \overline{\Psi_{\ge 0}(Y_{\ge 0})}.
\end{equation*}
Since $Y_{\ge 0}$ and $X(\fan)_{\ge 0}$ are compact Hausdorff spaces, $\Psi_{\ge 0}(Y_{\ge 0})$ is a closed subspace of $X(\fan)_{\ge 0}$. Thus, we obtain
\begin{equation*}
\overline{\Psi_{\ge 0}(\RS_{\emptyset,\dyn;>0})} \subseteq \Psi_{\ge 0}(Y_{\ge 0}).
\end{equation*}
This and \eqref{eq: closure of nonnegative part of X sigma I empty II} imply that
$X(\fan)_{\ge 0}\subseteq \Psi_{\ge 0}(Y_{\ge 0})$ which means that $\Psi_{\ge 0}$ is surjective.
\end{proof}

\begin{remark} 
{\rm
One can also deduce Corollary~$\ref{prop: surjection of psi ge 0}$ by directly showing that $\Psi_{\ge 0}$ is surjective on each stratum $X(\Sigma)_{K,J;\ge 0}$
as in the proof of Proposition~$\ref{lemm: injection on ZKJ>0}$.
}
\end{remark}

\vspace{10pt}
We now prove the main theorem of this section.

\begin{theorem}\label{theo: main theorem}
The map 
$\Psi_{\ge 0} \colon Y_{\ge 0}\to X(\fan)_{\ge 0}$ is a homeomorphism such that 
\begin{equation*}
\Psi_{\ge0}(\RS_{K,J;> 0})=\Xp{K,J}
\end{equation*}
for $K\subseteq J\subseteq \dyn.$ 
\end{theorem}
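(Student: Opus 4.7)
The plan is to simply assemble the pieces already established in this section and invoke a classical topological fact. By Proposition~\ref{prop: resctriction to nonnegative}, the restriction $\Psi_{\ge 0}\colon Y_{\ge 0} \to X(\fan)_{\ge 0}$ is a well-defined continuous map satisfying $\Psi_{\ge 0}(\RS_{K,J;>0}) \subseteq \Xp{K,J}$ for each $K \subseteq J \subseteq \dyn$. By Corollary~\ref{prop: injection of psi ge 0} and Corollary~\ref{prop: surjection of psi ge 0}, this continuous map is bijective.

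To upgrade this continuous bijection to a homeomorphism, I would appeal to the standard fact that a continuous bijection from a compact space onto a Hausdorff space is automatically a homeomorphism. For this, I need $Y_{\ge 0}$ to be compact and $X(\fan)_{\ge 0}$ to be Hausdorff. The flag variety $\G{n}/B^-$ is compact in the classical topology, the Peterson variety $Y$ is a closed subvariety, and $(\G{n}/B^-)_{\ge 0}$ is closed in $\G{n}/B^-$ by definition (it is defined as the closure of $(\G{n}/B^-)_{>0}$). Hence $Y_{\ge 0} = Y \cap (\G{n}/B^-)_{\ge 0}$ is closed in the compact space $\G{n}/B^-$, and is therefore compact. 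The target $X(\fan)_{\ge 0}$ is Hausdorff as a subspace of the toric variety $X(\fan)$. This yields the homeomorphism.

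For the cell-preservation statement $\Psi_{\ge 0}(\RS_{K,J;>0}) = \Xp{K,J}$, one inclusion is already contained in Proposition~\ref{prop: resctriction to nonnegative}. The reverse inclusion follows from bijectivity combined with the two partitions
\begin{equation*}
Y_{\ge 0} = \bigsqcup_{K\subseteq J\subseteq \dyn}\RS_{K,J;>0}, \qquad X(\fan)_{\ge 0} = \bigsqcup_{K\subseteq J \subseteq \dyn} \Xp{K,J},
\end{equation*}
recorded in \eqref{decomposition of nonnegative part of Peterson} and \eqref{eq: decomp of X nonnegative KJ}: since $\Psi_{\ge 0}$ maps each piece $\RS_{K,J;>0}$ into the corresponding piece $\Xp{K,J}$, and the pieces partition both sides, any failure of surjectivity onto some $\Xp{K,J}$ would contradict the overall surjectivity of $\Psi_{\ge 0}$.

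There is no substantial obstacle remaining; the main theorem is essentially a packaging statement. The only observation that requires any care is the compactness of $Y_{\ge 0}$, which follows from general properties of the flag variety and its totally nonnegative part rather than from anything specific to $Y$.
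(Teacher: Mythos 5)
Your proposal is correct and follows essentially the same route as the paper: the paper likewise combines Corollary~\ref{prop: injection of psi ge 0} and Corollary~\ref{prop: surjection of psi ge 0} to get a continuous bijection, upgrades it to a homeomorphism via the compact-to-Hausdorff argument, and deduces $\Psi_{\ge0}(\RS_{K,J;>0})=\Xp{K,J}$ from the containment in Lemma~\ref{lemma: image of nonnegative part of ZKJ} together with the two disjoint decompositions. Your extra remarks justifying compactness of $Y_{\ge0}$ and Hausdorffness of $X(\fan)_{\ge0}$ are a harmless elaboration of what the paper states without proof.
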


\begin{proof}
We know that $\Psi_{\ge 0} \colon Y_{\ge 0}\to X(\fan)_{\ge 0}$ is a continuous bijection by Corollary~\ref{prop: injection of psi ge 0} and Corollary~\ref{prop: surjection of psi ge 0}.
Since $Y_{\ge 0}$ and $X(\fan)_{\ge 0}$ are compact Hausdorff space, it follows that $\Psi_{\ge 0}$ is a homeomorphism.

By Lemma~\ref{lemma: image of nonnegative part of ZKJ}, we have $\Psi_{\ge0}(\RS_{K,J;> 0})\subseteq \Xp{K,J}$ for $K\subseteq J\subseteq \dyn$.
Here, $\RS_{K,J;> 0}$ and $\Xp{K,J}$ are the pieces of disjoint decompositions of $Y_{\ge 0}$ and $X(\fan)_{\ge 0}$, respectively (see \eqref{eq: two decompositions}). Since $\Psi_{\ge 0}$ is a bijection, it follows that the equality $\Psi_{\ge0}(\RS_{K,J;> 0})= \Xp{K,J}$ holds for all $K\subseteq J\subseteq \dyn$.
\end{proof}

\vspace{20pt}

\section{A proof of Rietsch's conjecture on $Y_{\ge0}$}
In this section, we prove that Rietsch's conjecture on $Y_{\ge0}$ holds.

\subsection{Statement of the conjecture}
Let us recall the statement of Rietsch's conjecture on $Y_{\ge0}$. 
Let $K\subseteq J\subseteq \dyn$. Recall from \eqref{eq: face post of cube} that we defined the face $E_{K,J}$ of the standard cube $[0,1]^{n-1}$ as
\begin{equation*}
E_{K,J}=
\left\{(x_1,\ldots,x_{n-1})\in [0,1]^{\yn} \ \left| \ \begin{array}{l} x_i=0\quad\text{if $i\in K$},\\
x_i=1\quad\text{if $i\notin J$}
\end{array}
\right.
\right\}.
\end{equation*}
Its relative interior $\Int(E_{K,J})$ is given by
\begin{equation*}
\Int(E_{K,J})
\coloneqq
\left\{(x_i)\in [0,1]^{\yn} \ \left| \ \begin{array}{l} x_i=0\hspace{12mm}\text{if}\quad i\in K,\\
x_i=1\hspace{12mm}\text{if}\quad i\notin J, \\
0<x_i<1\quad\text{otherwise}
\end{array}
\right.
\right\}.
\end{equation*}
In \cite[Sect.~10.3]{ri06}, Rietsch observed that the decomposition $Y_{\ge 0}=\bigsqcup_{K\subseteq J\subseteq \dyn}\RS_{K,J;>0}$ in \eqref{decomposition of nonnegative part of Peterson} has properties similar to that of the cell decomposition $$[0,1]^{n-1}=\bigsqcup_{K\subseteq J\subseteq \dyn}\Int(E_{K,J}).$$
In fact, she showed that 
$\RS_{K,J;> 0}$ is homeomorphic to a cell $\R^{|J|-|K|}_{>0}$ for $K\subseteq J\subseteq \dyn$.
She also proved that $\RS_{K,>0}(\coloneqq Y_{\ge0}\cap \Omega_{w_K}^{\circ})$ is homeomorphic to $\R^{(n-1)-|K|}_{\ge0}$ for $K\subseteq\dyn$
and that $Y_{\ge0}$ is contractible. Based on these observations, 
Rietsch gave the following conjecture in \cite[Conjecture 10.3]{ri06}.\\

\noindent
\textbf{Rietsch's conjecture on $Y_{\ge0}$}\textbf{.}
There is a homeomorphism 
\begin{equation*}
Y_{\ge 0}\to [0,1]^{\yn}
\end{equation*}
such that $\RS_{K,J;>0}$ is mapped to $\Int(E_{K,J})$ for $K\subseteq J\subseteq \dyn$. \\

In the next subsection, we prove that the conjecture holds by applying the main theorem (Theorem~\ref{theo: main theorem}) of this paper. 
For that purpose, let us recall a few notions for polytopes. More details can be found in \cite[Chapter~1]{bu-pa}.

Let $P$ be a polytope. The \textit{face poset} of $P$
is the partially ordered set of faces of $P$, with respect to inclusion. We denote the face poset of $P$ by $\text{Pos}(P)$.
The following claim is well-known; it can be deduced by using simplicial subdivision
for polytopes and simplicial homeomorphisms (See Lemma 2.8 of \cite{mu2}).

\begin{lemma}\label{lemm: combinatorially equivalent induced from a homeomorphism}
Let $P$ and $P'$ be polytopes. If there is a bijection 
$
\alpha \colon \text{\rm Pos}(P)\to \text{\rm Pos}(P')$
preserving the partial orders, 
then there exists a homeomorphism $f\colon P\to P'$ such that 
\begin{enumerate}
\item
$f(F)=\alpha(F)$\quad\text{for $F\in \text{\rm Pos}(P)$},
\item
$f(\Int(F))=\Int(\alpha(F))$\quad\text{for $F\in \text{\rm Pos}(P)$}.
\end{enumerate}
\end{lemma}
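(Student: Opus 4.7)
The plan is to prove this via barycentric subdivision. For a polytope $P$, denote by $\mathrm{sd}(P)$ its barycentric subdivision: this is the abstract simplicial complex whose vertices are in bijection with the non-empty faces $F$ of $P$ (each realized geometrically as the barycenter $b_F \in F$), and whose $k$-simplices correspond to chains $F_0 \subsetneq F_1 \subsetneq \cdots \subsetneq F_k$ in $\mathrm{Pos}(P)$. A classical fact is that the geometric realization $|\mathrm{sd}(P)|$ equals $P$ itself, and similarly $|\mathrm{sd}(P')| = P'$.

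Given the order-preserving bijection $\alpha \colon \mathrm{Pos}(P) \to \mathrm{Pos}(P')$, I would first construct a simplicial isomorphism $\widetilde{f} \colon \mathrm{sd}(P) \to \mathrm{sd}(P')$ by sending the vertex $b_F$ to $b_{\alpha(F)}$ for each $F \in \mathrm{Pos}(P)$. Because $\alpha$ is an order-preserving bijection (whose inverse is then automatically order-preserving, as the face poset determines the combinatorial type), it induces a bijection between chains in $\mathrm{Pos}(P)$ and chains in $\mathrm{Pos}(P')$, so the vertex map does extend to a simplicial isomorphism on the full subdivisions. Taking geometric realizations then yields a homeomorphism $f \colon P \to P'$.

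For property (1), each face $F$ of $P$ has its own barycentric subdivision $\mathrm{sd}(F)$, which embeds naturally as the subcomplex of $\mathrm{sd}(P)$ spanned by the vertices $\{b_{F'} \mid F' \subseteq F\}$. Its image under $\widetilde{f}$ is the subcomplex of $\mathrm{sd}(P')$ spanned by $\{b_{\alpha(F')} \mid F' \subseteq F\} = \{b_G \mid G \subseteq \alpha(F)\}$, which is precisely $\mathrm{sd}(\alpha(F))$. Passing to realizations yields $f(F) = \alpha(F)$. For property (2), the relative interior $\Int(F)$ is the union of the open simplices of $\mathrm{sd}(P)$ whose associated chain has $F$ as its maximum element; by the above identification, $f$ sends this union to the corresponding union in $\mathrm{sd}(P')$ where the chain's maximum element is $\alpha(F)$, which equals $\Int(\alpha(F))$.

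The main (though essentially routine) technical points to verify are that the geometric realization of $\mathrm{sd}(P)$ recovers $P$, that the relative interior of a face admits the asserted decomposition into open simplices of the subdivision, and that the compatibility $f(F)=\alpha(F)$ forces $f(\Int(F)) = \Int(\alpha(F))$ at the level of realizations. All of these are standard for convex polytopes (see e.g.\ \cite{bu-pa} and Lemma 2.8 of \cite{mu2}), so once they are in hand the argument is essentially formal.
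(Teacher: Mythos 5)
Your barycentric-subdivision argument is exactly the route the paper has in mind: the paper gives no detailed proof of this lemma, only remarking that it follows from simplicial subdivision of polytopes and simplicial homeomorphisms (citing Lemma 2.8 of \cite{mu2}), which is precisely what you spell out, and your identifications of $f(F)=\alpha(F)$ and $f(\Int(F))=\Int(\alpha(F))$ via chains with maximum $F$ are correct. One small caveat: the hypothesis should be read as saying $\alpha$ is a poset isomorphism (i.e.\ $F\subseteq G$ if and only if $\alpha(F)\subseteq\alpha(G)$, which is what Proposition~\ref{prop: polytope bijection} supplies in the application), since your parenthetical claim that the inverse of an order-preserving bijection between face posets is automatically order-preserving is not justified as stated, and this isomorphism property is what makes $\widetilde{f}$ a simplicial isomorphism rather than merely a simplicial map.
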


\vspace{3mm}

\subsection{A proof of Rietsch's conjecture on $Y_{\ge0}$}
By Theorem~\ref{theo: main theorem}, the map $\Psi_{\ge 0} \colon Y_{\ge 0}\to X(\fan)_{\ge 0}$ is a homeomorphism such that 
\begin{equation*}
\Psi_{\ge0}(\RS_{K,J;> 0})=\Xp{K,J}
\quad \text{for $K\subseteq J\subseteq \dyn.$}
\end{equation*}
As we saw in Section~\ref{subsec: nonnegative and polytope}, the moment map $\mu \colon X(\fan)_{\ge 0}\to \R^{n-1}$ restricts to a homeomorphism 
$\overline{\mu} \colon X(\fan)_{\ge 0}\to P_{n-1}$ such that 
\begin{equation*}
\overline{\mu}(\Xp{K,J})=\Int(F_{K,J})\quad \text{for $K\subseteq J\subseteq \dyn$}.
\end{equation*}
By \eqref{eq: face post of cube 2} and Proposition~\ref{prop: polytope bijection}, the polytope $P_{n-1}$ and the regular cube $[0,1]^{n-1}$ are combinatorially equivalent. Namely, there is a bijection
\begin{equation*}
\alpha \colon \text{Pos}(P_{n-1})\to \text{Pos}([0,1]^{\yn})
\quad ; \quad 
\FCE{K}{J}\mapsto E_{K,J}
\end{equation*}
which preserves the partial orders.
Thus, by Lemma~\ref{lemm: combinatorially equivalent induced from a homeomorphism}, there exists a homeomorphism $f \colon P_{n-1}\to [0,1]^{\yn}$ such that
\begin{equation*}
f(\Int(F_{K,J}))=\Int(E_{K,J})\quad \text{for $K\subseteq J\subseteq \dyn$}.
\end{equation*}
Writing $\varphi\coloneqq f\circ\overline{\mu}\circ\Psi_{\ge 0}$, 
we conclude that the map
\begin{equation*}
\varphi \colon Y_{\ge 0}\to [0,1]^{\yn}
\end{equation*} 
is a homeomorphism such that 
\begin{equation*}
\varphi(\RS_{K,J;>0})=\Int(E_{K,J})
\quad \text{for $K\subseteq J\subseteq \dyn$}.
\end{equation*}
This proves Rietsch's conjecture on $Y_{\ge 0}$.

\vspace{20pt}

\end{document}